\newtheorem{thm}{Theorem}[section]
\newtheorem{lem}[thm]{Lemma}
\newtheorem{prop}[thm]{Proposition}
\newtheorem{question}[thm]{Question}
\def\square{\vbox{
      \hrule height 0.4pt
      \hbox{\vrule width 0.4pt height 5.5pt \kern 5.5pt \vrule width 0.4pt}
      \hrule height 0.4pt}}
\def\ch\mathrm{c h}
\newcommand{\VAP}{\mathrm{VAP}}
\numberwithin{equation}{section}
\begin{document}

\title[Lifting theorem for the virtual  pure braid groups]{Lifting theorem for the virtual pure braid groups}

\author[V. Bardakov]{Valeriy G. Bardakov}
\address{Sobolev Institute of Mathematics, Novosibirsk 630090, Russia,}
\address{Novosibirsk State University, Novosibirsk 630090, Russia,}
\address{Novosibirsk State Agrarian University, Dobrolyubova street, 160,  Novosibirsk 630039, Russia,}
\email{bardakov@math.nsc.ru}


\author{Jie Wu }
\address{School of Mathematical Sciences, Hebei Normal University, Shijiazhuang, Hebei 050024, China} \email{matwuj@nus.edu.sg}
\urladdr{www.math.nus.edu.sg/\~{}matwujie}
\thanks{The main result is supported by the Russian Science Foundation grant no. 16-11-10073.}

\begin{abstract} In this article we prove theorem on Lifting for the set of virtual pure braid groups. This theorem says that if we know presentation of virtual pure braid group $VP_4$, then we can find presentation of $VP_n$ for  arbitrary $n > 4$. Using this theorem we find the set of generators and defining relations for simplicial group $T_*$ which was defined in \cite{BW}. We find a decomposition of the Artin pure braid group $P_n$ in semi-direct product of free groups in the cabled generators.
\end{abstract}
\subjclass[2010]{20F36, 55Q40, 18G30}
\keywords{Virtual braid group, pure braid group, simplicial group, virtual cabling}

\maketitle

\section{Introduction}

The operation cabling for classical braids studied in~\cite{CW}.
 For virtual pure braid group $VP_n$ this operation  gives new generators for $VP_n$ (see \cite{BW}). It was proved that for $n\geq 3$, the group $VP_n$ is generated by the $n$-strand virtual braids obtained by taking $(k,l)$-cabling on the standard generators $\lambda_{1,2}$ and $\lambda_{2,1}$ of $VP_2$ together with adding trivial strands $n-k-l$ to the end for $1\leq k\leq n-1$ and $2\leq k+l\leq n$, where a $(k,l)$-cabling on a $2$-strand virtual braid means to take $k$-cabling on the first strand and $l$-cabling on the second strand.

Different from the classical situation~\cite{CW} that the $n$-strand braids cabled from the standard generator $A_{1,2}$ for $P_2$ generates a free group of rank $n-1$, the subgroup of $VP_n$ generated by $n$-strand virtual braids cabled from $\lambda_{1,2}$ and $\lambda_{2,1}$, which is denoted by $T_{n-1}$, is no longer free for $n\geq3$.
For the first nontrivial case that $n=3$, a presentation of $T_2$ has been explored with producing a decomposition theorem for $VP_3$ using cabled generators~\cite{BMVW}.

In the present  article  we continue to study $VP_n$ in cabled generators, which we started in \cite{BW}. We find some sufficient condition under which a simplicial group $G_*$ is contractible. In particular, we prove that the simplicial group $VAP_* = \{VP_i\}_{i=1,2,\ldots}$ is contractible.
Also, we prove the lifting theorem for the virtual pure braid groups. From this theorem follows that if we know the structure of $VP_4$, $T_3$ or $P_4$, then using degeneracy maps we can find the structure of $VP_n$, $T_n$ or $P_n$ for all bigger $n$. On the other side we prove that if we know a presentation of $VP_n$, $n \geq 4$, then conjugated it by elements $\rho_n$, $\rho_n \rho_{n-1}$, $\ldots$, $\rho_n \rho_{n-1} \ldots \rho_1 \in VB_{n+1}$ we can find the presentation of $VP_{n+1}$.

The article is organized as follows. In Section \ref{virt}, we give a review on braid groups and virtual braid groups. The simplicial structure on virtual pure braid groups will be discussed in Section~\ref{simp}. Is Section \ref{lift} we prove the lifting theorem.
In Section ~\ref{s41}, we discuss the cabling operation on classical pure braid group $P_n$ as subgroup of$VP_n$. We know two types of decompositions of $P_n$ as semi-direct products (see, for example, \cite{B1}). In Section \ref{s41} we construct new decomposition of this type in terms of the cabled generators.
In the last Section \ref{fin} we formulate some questions for further research.

\subsection{Acknowledgements}
This article was written when the first author visited  College of Mathematics and information Science Hebei Normal University. He thanks the administration for good working conditions.

\section{Braid and virtual braid groups} \label{virt}

\subsection{Braid group} The braid group $B_n$ on $n$ strings  is generated by $\sigma_1,\,  \sigma_2, \, \ldots , \, \sigma_{n-1}$ and is defined by relations
\begin{align*}
& \sigma_i \sigma_{i+1} \sigma_i = \sigma_{i+1} \sigma_i \sigma_{i+1},~~~i = 1, 2, \ldots, n-2, \\
& \sigma_i \sigma_{j} = \sigma_j \sigma_{i},~~|i-j|>1.
\end{align*}

Let $S_n$, $n \geq 1$ be the symmetric group which is generated by $\rho_1, \, \rho_2, \, \ldots , \, \rho_{n-1}$ and is defined by relations
\begin{align*}
& \rho_i^2 = 1,~~~i = 1, 2, \ldots, n-1,\\
& \rho_i \rho_{i+1} \rho_i = \rho_{i+1} \rho_i \rho_{i+1},~~~i = 1, 2, \ldots, n-2,\\
& \rho_i \rho_{j} = \rho_j \rho_{i},~~|i-j|>1.
\end{align*}

There is a homomorphism $B_n \to S_n$, which sends $\sigma_i$ to $\rho_i$. Its kernel is the pure braid group $P_n$. This group is generated by elements $A_{i,j}$, $1 \leq i < j \leq n$, where
$$
A_{i,i+1} = \sigma_i^2,
$$
$$
A_{i,j} = \sigma_{j-1} \sigma_{j-2} \ldots \sigma_{i+1} \sigma_i^2 \sigma_{i+1}^{-1} \ldots \sigma_{j-2}^{-1} \sigma_{j-1}^{-1},~~~i+1 < j \leq n,
$$
and is defined by relations (where $\varepsilon = \pm 1$):
\begin{align*}
& A_{ik}^{-\varepsilon} A_{kj} A_{ik}^{\varepsilon} = (A_{ij} A_{kj})^{\varepsilon} A_{kj} (A_{ij} A_{kj})^{-\varepsilon},\\
& A_{km}^{-\varepsilon} A_{kj} A_{km}^{\varepsilon} = (A_{kj} A_{mj})^{\varepsilon} A_{kj} (A_{kj} A_{mj})^{-\varepsilon},~~m < j, \\
& A_{im}^{-\varepsilon} A_{kj} A_{im}^{\varepsilon} = [A_{ij}^{-\varepsilon}, A_{mj}^{-\varepsilon}]^{\varepsilon} A_{kj} [A_{ij}^{-\varepsilon}, A_{mj}^{-\varepsilon}]^{-\varepsilon}, ~~i < k < m,\\
& A_{im}^{-\varepsilon} A_{kj} A_{im}^{\varepsilon} = A_{kj}, ~~k < i, m < j~\mbox{or}~ m < k,
\end{align*}
Here and further  $[a,b] = a^{-1} b^{-1} a b$ is the commutator of $a$ and $b$.

There is an epimorphism of $P_n$ to $P_{n-1}$ what is removing of the $n$-th string. Its kernel $U_n = \langle A_{1n}, A_{2n}, \ldots, A_{n-1,n} \rangle$ is a free group of rank $n-1$ and $P_n = U_n \leftthreetimes P_{n-1}$ is a semi-direct product of $U_n$ and $P_{n-1}$. Hence,
$$
P_n = U_n \leftthreetimes (U_{n-1} \leftthreetimes (\ldots \leftthreetimes (U_3 \leftthreetimes U_2)) \ldots ),
$$
is a semi-direct product of free groups and $U_2 = \langle A_{12}\rangle$ is the infinite cyclic  group.

\subsection{Virtual braid group} \label{virt1}

The virtual braid group $VB_n$ is generated by elements
$$
\sigma_1,\,  \sigma_2, \, \ldots , \, \sigma_{n-1}, \, \rho_1, \, \rho_2, \, \ldots , \, \rho_{n-1},
$$
where $\sigma_1,\,  \sigma_2, \, \ldots , \, \sigma_{n-1}$ generate the classical braid group $B_n$ and
the elements $\rho_1$,  $\rho_2$,  $\ldots $,  $\rho_{n-1}$ generate the symmetric group
$S_n$. Hence, $VB_n$ is defined by relations of $B_n$, relations of $S_n$
and mixed relation:
$$
\sigma_i \rho_j = \rho_j \sigma_i,~~~|i-j| > 1,
$$
$$
\rho_i \rho_{i+1} \sigma_i = \sigma_{i+1} \rho_i \rho_{i+1}~~~i = 1, 2, \ldots, n-2.
$$

As for the classical braid groups there exists the canonical
epimorphism of $VB_n$ onto the symmetric group $VB_n\to S_n$ with the
kernel called the {\it virtual pure  braid group} $VP_n$. So we have a
short exact sequence
\begin{equation*}
1 \to VP_n \to VB_n \to S_n \to 1.
\end{equation*}
Define the following elements in $VP_n$:
$$
\lambda_{i,i+1} = \rho_i \, \sigma_i^{-1},~~~
\lambda_{i+1,i} = \rho_i \, \lambda_{i,i+1} \, \rho_i = \sigma_i^{-1} \, \rho_i,
~~~i=1, 2, \ldots, n-1,
$$
$$
\lambda_{ij} = \rho_{j-1} \, \rho_{j-2} \ldots \rho_{i+1} \, \lambda_{i,i+1} \, \rho_{i+1}
\ldots \rho_{j-2} \, \rho_{j-1},
$$
$$
\lambda_{ji} = \rho_{j-1} \, \rho_{j-2} \ldots \rho_{i+1} \, \lambda_{i+1,i} \, \rho_{i+1}
\ldots \rho_{j-2} \, \rho_{j-1}, ~~~1 \leq i < j-1 \leq n-1.
$$
It is shown in \cite{B} that the group $VP_n$, $n\geq 2$ admits a
presentation with the  generators $\lambda_{ij},\ 1\leq i\neq j\leq n,$
and the following relations:
\begin{align}
& \lambda_{ij}\lambda_{kl}=\lambda_{kl}\lambda_{ij} \label{rel},\\
&
\lambda_{ki}\lambda_{kj}\lambda_{ij}=\lambda_{ij}\lambda_{kj}\lambda_{ki}
\label{relation},
\end{align}
where distinct letters stand for distinct indices.

Like the classical pure braid groups, groups $VP_n$ admit a
semi-direct product decompositions \cite{B}: for $n\geq 2,$ the
$n$-th virtual pure braid group can be decomposed as
\begin{equation}
VP_n=V_{n-1}^*\rtimes VP_{n-1},~~n \geq 2,
\label{eq:s_d_dec}
\end{equation}
where $V_{n-1}^*$ is a  subgroup of $VP_{n}$, $V_1^* = F_2$, $VP_1$ is supposed
to be the trivial group.

\section{Simplicial groups} \label{simp}

\subsection{Simplicial sets and simplicial groups} Recall the definition of simplicial groups (see \cite[p.~300]{MP} or \cite{BCWW}). A sequence of sets $X_* = \{ X_n \}_{n \geq 0}$  is called a
{\it simplicial set} if there are face maps:
$$
d_i : X_n \longrightarrow X_{n-1} ~\mbox{for}~0 \leq i \leq n
$$
and degeneracy  maps
$$
s_i : X_n \longrightarrow X_{n+1} ~\mbox{for}~0 \leq i \leq n,
$$
that are  satisfy the following simplicial identities:
\begin{enumerate}
\item $d_i d_j = d_{j-1} d_i$ if $i < j$,
\item $s_i s_j = s_{j+1} s_i$ if $i \leq j$,
\item $d_i s_j = s_{j-1} d_i$ if $i < j$,
\item $d_j s_j = id = d_{j+1} s_j$,
\item $d_i s_j = s_{j} d_{i-1}$ if $i > j+1$.
\end{enumerate}
Here $X_n$ can be geometrically viewed as the set of $n$-simplices including all possible degenerate simplices.

A {\it simplicial group} is a simplicial set $X_*$ such that each $X_n$ is a group and all face and degeneracy operations are group homomorphism.
 Let $G_*$ be a simplicial group. The \textit{Moore cycles} $\mathrm{Z}_n(G_*)\leq G_n$ is defined by
$$
\mathrm{Z}_n(G_*)=\bigcap_{i=0}^n\mathrm{Ker}(d_i\colon G_n\to G_{n-1})
$$
and the \textit{Moore boundaries} $\mathcal{B}_n(G_*)\leq G_n$ is defined by
$$
\mathcal{B}_n(G_*)=d_0\left(\bigcap_{i=1}^{n+1}\mathrm{Ker}(d_i\colon G_{n+1}\to G_n)\right).
$$
Simplicial identities guarantees that $\mathcal{B}_n(G_*)$ is a (normal) subgroup of $\mathrm{Z}_n(G_*)$. The \textit{Moore homotopy group} $\pi_n(G_*)$ is defined by
$$
\pi_n(G_*)=\mathrm{Z}_n(G_*)/\mathcal{B}_n(G_*).
$$
It is a classical result due to J. C. Moore ~\cite{Moore} that $\pi_n(G_*)$ is isomorphic to the $n$-th homotopy group of the geometric realization of $G_*$.

\subsection{Simplicial group  on virtual pure braid groups}
By using the same ideas in the work~\cite{BCWW,CW} on the classical braids, in \cite{BW} was introduced a simplcial group
$$
\VAP_* :\ \ \ \ldots\ \begin{matrix}\longrightarrow\\[-3.5mm] \ldots\\[-2.5mm]\longrightarrow\\[-3.5mm]
\longleftarrow\\[-3.5mm]\ldots\\[-2.5mm]\longleftarrow \end{matrix}\ VP_4 \ \begin{matrix}\longrightarrow\\[-3.5mm]\longrightarrow\\[-3.5mm]\longrightarrow\\[-3.5mm]\longrightarrow\\[-3.5mm]\longleftarrow\\[-3.5mm]
\longleftarrow\\[-3.5mm]\longleftarrow
\end{matrix}\ VP_3\ \begin{matrix}\longrightarrow\\[-3.5mm] \longrightarrow\\[-3.5mm]\longrightarrow\\[-3.5mm]
\longleftarrow\\[-3.5mm]\longleftarrow \end{matrix}\ VP_2\ \begin{matrix} \longrightarrow\\[-3.5mm]\longrightarrow\\[-3.5mm]
\longleftarrow \end{matrix}\ VP_1$$
on pure virtual braid groups with $\VAP_n=VP_{n+1}$, the face homomorphism
$$
d_i : \VAP_n=VP_{n+1} \longrightarrow \VAP_{n-1}=VP_n
$$
given by deleting $(i+1)$th strand for $0\leq i\leq n$, and the degeneracy homomorphism
$$
s_i : \VAP_n=VP_{n+1} \longrightarrow \VAP_{n+1}=VP_{n+2}
$$
given by doubling the $(i+1)$th strand for $0\leq i\leq n$.

Let $\iota_n\colon VP_n\to VP_{n+1}$ be the inclusion. Geometrically $\iota_n$ is the group homomorphism by adding a trivial strand on the end. From geometric information, we have the following formulae:
\begin{equation}\label{formula1}
s_j\iota_n=\iota_{n+1}s_j\colon VP_n\longrightarrow VP_{n+1} \textrm{ for } 0\leq j\leq n-1,
\end{equation}
\begin{equation}\label{formula2}
d_j\iota_n=\left\{
\begin{array}{lcl}
\iota_{n-1}d_j&\textrm{ if }& j<n,\\
\mathrm{id}&\textrm{ if }&j=n.\\
\end{array}\right.
\end{equation}
From the above formulae, the inclusion
$\iota_n\colon VP_n\to VP_{n+1}
$
gives an extra operation on the simplicial group $\mathrm{VAP}_*$ so that the simplicial identities still hold by regarding $\iota_n$ as \textit{extra degeneracy}
$$
s_n=\iota_n\colon \mathrm{VAP}_{n-1}=VP_n\longrightarrow \mathrm{VAP}_n=VP_{n+1}.
$$
Motivated from this example, a simplicial group $G_*$ is called \textit{conic} if there exists an extra degeneracy homomorphism $s_n\colon G_{n-1}\to G_n$ so that simplicial identities (including formulae involving $s_n$) hold.

\begin{prop}
Any conic simplicial group $G_*$ is contractible.
\end{prop}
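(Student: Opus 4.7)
The plan is to prove that every Moore homotopy group $\pi_n(G_*) = \mathrm{Z}_n(G_*)/\mathcal{B}_n(G_*)$ vanishes, so that by the Moore theorem cited above the geometric realization of $G_*$ is contractible. Since $\mathcal{B}_n(G_*) \subseteq \mathrm{Z}_n(G_*)$ is automatic, the task reduces to exhibiting every Moore cycle as a Moore boundary.

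Given $g \in \mathrm{Z}_n(G_*)$ my proposed witness is the telescoping product
$$
h \;=\; \prod_{i=0}^{n+1} s_i(g)^{(-1)^i} \;=\; s_0(g)\, s_1(g)^{-1}\, s_2(g)\, s_3(g)^{-1}\,\cdots\, s_{n+1}(g)^{(-1)^{n+1}} \;\in\; G_{n+1},
$$
where $s_0,\ldots,s_n$ are the usual degeneracies and $s_{n+1}$ is the extra degeneracy supplied by the conic hypothesis. I will show that $d_0(h) = g$ and $d_j(h) = 1$ for every $1 \leq j \leq n+1$; together these place $h$ inside $\bigcap_{i=1}^{n+1} \ker d_i$ with $d_0(h)=g$, i.e.\ $g \in \mathcal{B}_n(G_*)$.

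Both checks are direct applications of the simplicial identities: since face maps are group homomorphisms, I evaluate $d_j$ factor by factor. For $j=0$ only $s_0(g)$ survives (via $d_0 s_0 = \mathrm{id}$), while every other factor is killed by $d_0 s_i = s_{i-1} d_0$ applied to $d_0(g) = 1$. For $1 \leq j \leq n+1$, exactly the two factors $s_{j-1}(g)^{(-1)^{j-1}}$ and $s_j(g)^{(-1)^j}$ survive (via $d_j s_{j-1} = \mathrm{id}$ and $d_j s_j = \mathrm{id}$), yielding $g^{(-1)^{j-1}}$ and $g^{(-1)^j}$ respectively; as they are powers of the same element they commute, and cancel because $(-1)^{j-1} + (-1)^j = 0$. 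All other factors die by $d_j s_i = s_{i-1} d_j$ (when $j<i$) or $d_j s_i = s_i d_{j-1}$ (when $j>i+1$), combined with $d_k(g) = 1$ for $0 \leq k \leq n$.

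The only place the conic hypothesis is really used is the top face $d_{n+1}$, where the contribution from the last factor $s_{n+1}(g)^{(-1)^{n+1}}$ needs the extra identities $d_{n+1} s_{n+1} = \mathrm{id}$ and $d_j s_{n+1} = s_n d_j$ for $j < n+1$; these are exactly the identities postulated for an extra degeneracy. The rest is essentially the standard construction of a simplicial contraction via an extra degeneracy, so I expect no serious obstacle beyond the routine bookkeeping outlined above; a small sanity check at $n = 0$, where the product reduces to $h = s_0(g)\, s_1(g)^{-1}$, confirms the formula still makes sense at the bottom of the simplicial tower.
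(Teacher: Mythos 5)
Your proof is correct, and the underlying idea is the same as the paper's (the extra degeneracy contracts the Moore complex), but your witness is genuinely different and worth contrasting. The paper simply takes the single element $y=s_{n+1}x$ and checks $d_jy=1$ for $0\leq j\leq n$ and $d_{n+1}y=x$; this is a one-line computation, but note that it exhibits $x$ as an element of $d_{n+1}\bigl(\bigcap_{i=0}^{n}\mathrm{Ker}\,d_i\bigr)$, i.e.\ as a boundary in the ``last face'' convention, whereas the paper defines $\mathcal{B}_n(G_*)$ as $d_0\bigl(\bigcap_{i=1}^{n+1}\mathrm{Ker}\,d_i\bigr)$. The two conventions yield isomorphic homotopy groups, so the paper's argument is fine, but it does not literally match its own definition. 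Your alternating telescope $h=\prod_{i=0}^{n+1}s_i(g)^{(-1)^i}$ is built precisely so that $d_0(h)=g$ and $d_j(h)=1$ for $1\leq j\leq n+1$, so it lands $g$ in $\mathcal{B}_n(G_*)$ exactly as defined; the price is the longer factor-by-factor bookkeeping (which checks out: the two surviving factors at each $d_j$, $1\leq j\leq n+1$, are adjacent powers of the same $g$ with exponents $(-1)^{j-1}$ and $(-1)^j$, so they cancel even though $G_{n}$ need not be abelian). One small caveat common to both arguments: at $n=0$ the identity $d_0s_1=s_0d_0$ presupposes an augmentation $d_0\colon G_0\to G_{-1}$ killing $g$, which the definition of ``conic'' leaves implicit; in the motivating example $\mathrm{VAP}_0=VP_1$ is trivial, so nothing is lost.
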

\begin{proof}
Let $x\in \mathrm{Z}_n(G_*)$ be a Moore cycle, that is $x\in G_n$ with $d_jx=1$ for $0\leq j\leq n$. Note that we have the extra operation $s_{n+1}\colon G_n\to G_{n+1}$. Let $y=s_{n+1}x\in G_{n+1}$. Then
$$
d_jy=d_js_{n+1}x=s_{n}d_jx=s_n(1)=1
$$
for $0\leq j\leq n$ and
$$
d_{n+1}y=d_{n+1}s_{n+1}x=x.
$$
It follows that $x$ is a Moore boundary. Thus $\pi_n(G_*)=0$ for all $n$, and so $G_*$ is contractible.
\end{proof}

\begin{prop}
Let $G_*$ be a conic simplicial subgroup of $\mathrm{VAP}_*$ such that $G_1=\mathrm{VAP}_1=VP_2$. Then $G_*=\mathrm{VAP}_*$.
\end{prop}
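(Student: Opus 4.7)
The plan is to induct on $n$, showing $G_n=\mathrm{VAP}_n=VP_{n+1}$ for all $n\geq 0$. The case $n=0$ is immediate since $\mathrm{VAP}_0=VP_1$ is trivial, and $n=1$ is the hypothesis. Assume $G_{n-1}=VP_n$ for some $n\geq 2$; I will derive $G_n=VP_{n+1}$. Because $G_*$ is a conic simplicial subgroup of $\mathrm{VAP}_*$, the group $G_n$ is closed under the ordinary degeneracies $s_0,\ldots,s_{n-1}$ and the extra degeneracy $s_n=\iota_n$ applied to $G_{n-1}=VP_n$. Hence $G_n$ contains the subgroup
\[
H_n:=\bigl\langle\,\iota_n(VP_n)\,\cup\,s_0(VP_n)\,\cup\cdots\cup\,s_{n-1}(VP_n)\,\bigr\rangle\leq VP_{n+1},
\]
and it suffices to show $H_n=VP_{n+1}$ by verifying that every standard generator $\lambda_{ij}$ of $VP_{n+1}$ lies in $H_n$.

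Any $\lambda_{ij}$ with $i,j\leq n$ belongs to $\iota_n(VP_n)$, since $\iota_n$ simply attaches a trivial strand at position $n+1$. For $\lambda_{k,n+1}$ and $\lambda_{n+1,k}$ with $k\leq n-1$, I will use the degeneracy $s_{n-1}$, which doubles the $n$-th strand: geometrically each crossing involving the $n$-th strand becomes a pair of parallel crossings, yielding cabling formulas of the form $s_{n-1}(\lambda_{k,n})=\lambda_{k,n}\lambda_{k,n+1}$ and $s_{n-1}(\lambda_{n,k})=\lambda_{n,k}\lambda_{n+1,k}$; combined with $\iota_n(\lambda_{k,n})$ and $\iota_n(\lambda_{n,k})$, these recover $\lambda_{k,n+1}$ and $\lambda_{n+1,k}$. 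The remaining ``doubled-pair'' generators $\lambda_{n,n+1}$ and $\lambda_{n+1,n}$ cannot arise from $s_{n-1}$ alone, since the two parallel doubled strands never cross each other; instead they are obtained from $s_{n-2}$ applied to $\lambda_{n-1,n}$ and $\lambda_{n,n-1}$ in $VP_n$, via analogous formulas such as $s_{n-2}(\lambda_{n-1,n})=\lambda_{n-1,n+1}\lambda_{n,n+1}$, using the $\lambda_{n-1,n+1}$ and $\lambda_{n+1,n-1}$ already placed in $H_n$ by the previous step.

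The principal technical obstacle is justifying the explicit cabling formulas for the action of $s_i$ on the generators $\lambda_{jk}$. These follow from the geometric description of the simplicial structure on $\mathrm{VAP}_*$ given in \cite{BW} (each crossing of the doubled strand is replaced by a pair of parallel crossings), and correspond exactly to the cabled generators mentioned in the introduction. Once these formulas are available, the induction closes immediately, and a direct check of the case $n=2$ (where $s_{n-2}=s_0$ and $\lambda_{n-1,n}=\lambda_{1,2}\in G_1=VP_2$) confirms that the argument handles the first nontrivial step without modification.
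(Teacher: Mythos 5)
Your proof is correct and follows essentially the same route as the paper: induction on the dimension, reducing everything to the fact that $VP_{n+1}=\langle \iota_n(VP_n), s_0(VP_n),\ldots,s_{n-1}(VP_n)\rangle$, which the paper simply cites while you verify it generator by generator using the degeneracy formulas (recorded in the paper as Proposition 4.2). The only cosmetic discrepancy is the order of the factors in your cabling formulas (the paper has $s_{l-1}(\lambda_{k,l})=\lambda_{k,l+1}\lambda_{k,l}$ rather than $\lambda_{k,l}\lambda_{k,l+1}$), which is immaterial for the generation argument.
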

\begin{proof}
The proof is given by induction on the dimension $n$ of $G_n$. From the hypothesis, $G_1=\mathrm{VAP}_1$. Suppose that $G_{n-1}=\mathrm{VAP}_{n-1}=VP_n$. From the property that $VP_{n+1}=\langle \iota_n(VP_n), s_0(VP_n),\ldots,s_{n-1}(VP_n)\rangle$, we see that $G_n=\mathrm{VAP}_n=VP_{n+1}$ and hence the result.
\end{proof}

The main point for introducing the new notion of conic simplicial group is to give a new presentation of $VP_n$ using degeneracy operations (including the extra degeneracies). From the above proposition, the new generators for $VP_n$ with $n\geq 2$ are given by
$$
s_{k_{n-2}}s_{k_{n-3}}\cdots s_{k_1}\lambda_{1,2} \textrm{ and } s_{k_{n-2}}s_{k_{n-3}}\cdots s_{k_1}\lambda_{2,1}
$$
for $0\leq s_{k_1}<s_{k_2}<\cdots<s_{k_{n-2}}\leq n-1$. Let
$$
\mu_{i,j}^{k,l}=s_{n-1}s_{n-2}\cdots \hat{s}_{l-1}\cdots \hat{s}_{k-1}\cdots s_0\lambda_{i,j}
$$
for $(i,j)=(1,2)$ or $(2,1)$ and $1<k<l\leq n$. Then
$$
VP_n=\langle \mu_{1,2}^{k,l}, \mu_{2,1}^{k,l}, 1\leq k<l\leq n \rangle.
$$
The relations with $a_{i,j}$ and $b_{i,j}$, which were defined in \cite{BW} are given by
\begin{equation}
a_{k,l-k}=\mu_{1,2}^{k,l}\textrm{ and } b_{k,l-k}=\mu_{2,1}^{k,l}
\end{equation}
By direct computations, we have the degeneracy formulae
\begin{equation}
s_t(\mu_{i,j}^{k,l})=\left\{
\begin{array}{lcl}
\mu_{i,j}^{k,l}&\textrm{ if }& t\geq l\\
\mu_{i,j}^{k,l+1}&\textrm{ if }& k\leq t<l\\
\mu_{i,j}^{k+1,l+1}&\textrm{ if }& 0\leq t<k.\\
\end{array}\right.
\end{equation}
By writing it in terms of $a_{i,j}$ and $b_{i,j}$, we have
\begin{equation}
s_ka_{i,j}=\left\{
\begin{array}{lcl}
a_{i,j}&\textrm{ if }&k\geq i+j\\
a_{i,j+1}&\textrm{ if } & i\leq k<i+j\\
a_{i+1,j+1}&\textrm{ if }&0\leq k<i,\\
\end{array}\right.
\textrm{ and }
s_kb_{i,j}=\left\{
\begin{array}{lcl}
b_{i,j}&\textrm{ if }&k\geq i+j\\
b_{i,j+1}&\textrm{ if } & i\leq k<i+j\\
b_{i+1,j+1}&\textrm{ if }&0\leq k<i.\\
\end{array}\right.
\end{equation}

For obtaining a new presentation of $VP_n$ on generators $\mu_{1,2}^{k,l}$ and $\mu_{2,1}^{k,l}$, we need to rewrite the relations
\begin{equation}\label{s-on-ijk}
s_{k_{n-3}}s_{k_{n-4}}\cdots s_{k_1}(\lambda_{ki}\lambda_{kj}\lambda_{ij})=s_{k_{n-3}}s_{k_{n-4}}\cdots s_{k_1}(\lambda_{ij}\lambda_{kj}\lambda_{ki})
\end{equation}
for distinct $1\leq i,j,k\leq 3$ and $0\leq k_1<k_2<\cdots<k_{n-3}\leq n-1$, and
\begin{equation}\label{s-on-commuting-rule}
s_{k_{n-4}}s_{k_{n-5}}\cdots s_{k_1}(\lambda_{i,j})s_{k_{n-4}}s_{k_{n-5}}\cdots s_{k_1}(\lambda_{k,l})=s_{k_{n-4}}s_{k_{n-5}}\cdots s_{k_1}(\lambda_{k,l})s_{k_{n-4}}s_{k_{n-5}}\cdots s_{k_1}(\lambda_{i,j})
\end{equation}
for distinct $1\leq i,j,k,l\leq 4$ and $0\leq k_1<k_2<\cdots<k_{n-4}\leq n-1$ in terms of $\mu_{i,j}^{k,l}$.

\section{Lifting defining relations of $VP_{n-1}$ to  $VP_n$} \label{lift}

Let $n\geq 4$. Let $\mathcal{R}^V(n)$ denote the defining relations~(\ref{rel}) and ~(\ref{relation}) of $VP_n$. By applying the degeneracy homomorphism $s_t\colon VP_n\to VP_{n+1}$ to $\mathcal{R}^V(n)$, we have the following equations
\begin{align}
& s_t(\lambda_{ij})s_t(\lambda_{kl})=s_t(\lambda_{kl})s_t(\lambda_{ij}) \label{equation3.7},\\
&
s_t(\lambda_{ki})s_t(\lambda_{kj})s_t(\lambda_{ij})=s_t(\lambda_{ij})s_t(\lambda_{kj})s_t(\lambda_{ki})
\label{equation3.8}
\end{align}
in $VP_{n+1}$ for $1\leq i,j,k,l\leq n$ with distinct letters standing for distinct indices, which is denoted as $s_t(\mathcal{R}^V(n))$.

The main aim of the present section is the proof of the following

\begin{thm} \label{lift}
Let $n\geq 4$. Consider $VP_n$ as a subgroup of $VP_{n+1}$ by adding a trivial strand in the end. Then
$$
\mathcal{R}^V(n)\cup\bigcup_{i=0}^{n-1}s_i(\mathcal{R}^V(n))
$$
gives the full set of the defining relations for $VP_{n+1}$.
\end{thm}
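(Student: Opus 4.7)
The plan is to show that every relation in the defining set $\mathcal{R}^V(n+1)$ of $VP_{n+1}$ is a consequence of the relations in $\mathcal{R}^V(n) \cup \bigcup_{i=0}^{n-1} s_i(\mathcal{R}^V(n))$. Every relation in $\mathcal{R}^V(n+1)$ (either a commutation $\lambda_{ij}\lambda_{kl} = \lambda_{kl}\lambda_{ij}$ on four distinct indices or a triangular relation $\lambda_{ki}\lambda_{kj}\lambda_{ij} = \lambda_{ij}\lambda_{kj}\lambda_{ki}$ on three distinct indices) is classified by whether the index $n+1$ appears among its indices. If not, the relation lies in $\mathcal{R}^V(n)$ via the inclusion $\iota_n$ and is already given, so the substantive work is to derive the relations that do involve the index $n+1$.

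To do this I would first pin down the explicit formula for $s_t(\lambda_{ab}) \in VP_{n+1}$ when $\lambda_{ab} \in VP_n$. Geometrically $s_t$ doubles the $(t+1)$-th strand, so for $a, b \neq t+1$ the image $s_t(\lambda_{ab}) = \lambda_{a'b'}$ is a single basic generator, the index map being $c \mapsto c$ for $c \leq t$ and $c \mapsto c+1$ for $c \geq t+2$; but if $a = t+1$ or $b = t+1$, then $s_t(\lambda_{ab})$ is a product of two basic generators whose indices involve both copies $t+1$ and $t+2$ of the doubled strand. Fixing the precise order of the factors in these products is a one-time geometric computation.

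The main derivations then proceed case by case. For a commutation $\lambda_{ij}\lambda_{k,n+1} = \lambda_{k,n+1}\lambda_{ij}$ with $i,j,k \in \{1, \ldots, n\}$ distinct and $n \notin \{i,j,k\}$, I would apply $s_{n-1}$ (doubling strand $n$) to the commutation $\lambda_{ij}\lambda_{kn} = \lambda_{kn}\lambda_{ij}$ in $\mathcal{R}^V(n)$. The lifted equation reads $\lambda_{ij} \cdot (\lambda_{kn}\lambda_{k,n+1}) = (\lambda_{kn}\lambda_{k,n+1}) \cdot \lambda_{ij}$, and combining with the original commutation and canceling $\lambda_{kn}$ yields the target. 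When $n \in \{i,j,k\}$, say $i=n$, I would instead apply $s_{n-2}$ to $\lambda_{n-1,j}\lambda_{kn} = \lambda_{kn}\lambda_{n-1,j}$ in $\mathcal{R}^V(n)$: its lift involves the product $\lambda_{n-1,j}\lambda_{n,j}$, and substituting the previously derived commutation $\lambda_{n-1,j}\lambda_{k,n+1} = \lambda_{k,n+1}\lambda_{n-1,j}$ and then canceling $\lambda_{n-1,j}$ gives the desired $\lambda_{n,j}\lambda_{k,n+1} = \lambda_{k,n+1}\lambda_{n,j}$. The triangular relations follow the same lift-and-substitute pattern applied to an appropriately chosen triangular in $\mathcal{R}^V(n)$.

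The main obstacle will be organizing the combinatorial case analysis so that every step appeals only to relations that are either in the given set or have already been derived. In particular, relations whose index sets contain several consecutive extreme indices (such as $n-1$, $n$, $n+1$) require multi-step cascades in which the simpler versions are handled first and then substituted into the lifted equations for the next case. A secondary subtlety is to fix a consistent convention for the order of factors in the product form of $s_t(\lambda_{ab})$ when $a$ or $b$ equals $t+1$: the geometry of strand-doubling forces the set of factors, but their order must be tracked carefully since the $\lambda_{pq}$ do not in general commute.
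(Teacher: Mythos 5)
Your proposal follows essentially the same route as the paper: establish the explicit degeneracy formulas on generators (the paper's Proposition 4.2, where the doubled-strand case produces an ordered product of two generators), observe that relations not involving the index $n+1$ are already present via $\iota_n$, and then derive the remaining commutativity and long relations by a cascading lift-and-cancel argument in which each step uses only previously derived relations. The paper carries out the combinatorial case analysis you defer (including a lemma isolating the index configurations where the lift is exact and explicit computations for the boundary cases such as $R_{1,3,5}$), but the underlying strategy is the same.
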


We will use the following proposition.

\begin{prop} \label{p3.1}
The degeneracy map $s_j : VP_n \longrightarrow VP_{n+1}$, $j = 0, 1, \ldots, n-1$, acts on the generators $\lambda_{k,l}$ and $\lambda_{l,k}$, $1 \leq k < l \leq n$, of $VP_n$ by the rules
$$
s_{i-1} (\lambda_{k,l}) = \left\{
\begin{array}{lr}
\lambda_{k+1,l+1} & for  ~i < k,\\
\lambda_{k,l+1} \lambda_{k+1,l+1} & for ~i = k, \\
\lambda_{k,l+1} & for  ~k < i < l,\\
& \\
\lambda_{k,l+1} \, \lambda_{k,l} & for ~i = l, \\
& \\
\lambda_{k,l} & for  ~i > l,
\end{array}
\right.
$$
$$
s_{i-1} (\lambda_{l,k}) = \left\{
\begin{array}{lr}
\lambda_{l+1,k+1} & for  ~i < k,\\
\lambda_{l+1,k+1} \lambda_{l+1,k} & for ~i = k, \\
\lambda_{l+1,k} & for  ~k < i < l,\\
& \\
\lambda_{l,k} \, \lambda_{l+1,k}   & for ~i = l, \\
& \\
\lambda_{l,k} & for  ~i > l.
\end{array}
\right.
$$

\end{prop}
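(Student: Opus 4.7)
The strategy is to treat $s_{i-1}$ as the geometric operation of doubling the $i$-th strand, which extends to a cabling homomorphism $c_i\colon VB_n\to VB_{n+1}$ whose restriction to $VP_n$ is the simplicial degeneracy. I would proceed in three steps: (i) work out explicit formulas for $c_i(\sigma_j)$ and $c_i(\rho_j)$ for every $j$, keeping in mind that when $j=i-1$ or $j=i$ the doubled strand participates in the crossing, so a single generator cables to a product of two adjacent generators; (ii) substitute these into the defining word $\lambda_{k,l}=\rho_{l-1}\cdots\rho_{k+1}\lambda_{k,k+1}\rho_{k+1}\cdots\rho_{l-1}$ (with $\lambda_{k,k+1}=\rho_k\sigma_k^{-1}$) and similarly for $\lambda_{l,k}$; (iii) simplify the resulting word in $VP_{n+1}$ using the defining relations~(\ref{rel}) and~(\ref{relation}).

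For the three ``non-interacting'' cases $i<k$, $k<i<l$, and $i>l$, the doubled strand meets none of the strands that cross in $\lambda_{k,l}$, so every generator in the defining word is simply relabeled with the appropriate shifted index and the asserted formulas drop out. The boundary cases $i=k$ and $i=l$ are the substantive ones. I would first treat the four base subcases with $l=k+1$ by direct expansion: for example, applying $c_k$ to $\rho_k\sigma_k^{-1}$ produces a short word that should reduce to $\lambda_{k,k+2}\lambda_{k+1,k+2}$. The general case $l>k+1$ then follows by induction, noting that the conjugating $\rho$'s in the definition of $\lambda_{k,l}$ all lie in the ``non-interacting'' range of indices under $c_k$ or $c_l$, so conjugating the base-case identity by the cabled $\rho$'s yields the claim after some reshuffling via~(\ref{rel}) and~(\ref{relation}).

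The main obstacle is the word-level simplification in the boundary cases: after cabling, the natural expression for $s_{i-1}(\lambda_{k,l})$ is substantially longer than the two-factor target, and collapsing it requires a careful sequence of applications of the commutation relation~(\ref{rel}) and the triangular relation~(\ref{relation}), together with bookkeeping of orderings. A cleaner, parallel route is to read the formulas directly off the virtual braid diagrams: doubling strand $k$ in $\lambda_{k,l}$ visibly yields two parallel linkings of strands $k,k+1$ with strand $l+1$, while doubling strand $l$ yields two linkings of strand $k$ with the two copies $l,l+1$. The algebraic calculation then serves to pin down the order of the two factors in each boundary case, which is the only genuinely subtle point; the asymmetric form of the products for $\lambda_{k,l}$ versus $\lambda_{l,k}$ reflects the opposite convention of ``over'' versus ``under'' in the two families of generators.
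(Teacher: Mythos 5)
The paper never actually proves Proposition~\ref{p3.1}: it is stated as a fact to be read off from the geometric definition of $s_{i-1}$ as doubling the $i$-th strand (cf.~\cite{BW}), so your proposal is being measured against that implicit diagrammatic verification rather than a written argument. Your ``cleaner, parallel route'' at the end --- reading the two-factor products directly off the diagram of $\lambda_{k,l}$ with one strand doubled, and pinning down the order of the factors from the over/under convention --- is exactly that verification and is sound; the simplicial identities $d_{i-1}s_{i-1}=d_{i}s_{i-1}=\mathrm{id}$ give a useful cross-check of which factor must die under which face map.

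Your primary algebraic route, however, has a genuine gap. There is no group homomorphism $c_i\colon VB_n\to VB_{n+1}$ that doubles the $i$-th strand: strand doubling is compatible with composition only when the first factor returns that strand to its starting position, which is precisely why the degeneracies are defined on $VP_n$ and not on $VB_n$. (Already for $B_2\to B_3$, doubling strand $1$ takes the pure braid $\sigma_1^2$ to a pure braid, whereas the square of the letter-by-letter image $\sigma_2\sigma_1$ of $\sigma_1$ is not even pure.) Consequently step (ii) --- substituting fixed formulas for $c_i(\sigma_j)$ and $c_i(\rho_j)$ into the word $\lambda_{k,l}=\rho_{l-1}\cdots\rho_{k+1}\,\rho_k\sigma_k^{-1}\,\rho_{k+1}\cdots\rho_{l-1}$ --- computes the wrong element whenever the conjugating prefix moves strand $i$, and it does: $\rho_{l-1}\cdots\rho_{k+1}$ permutes the positions $k+1,\dots,l$ cyclically, so the doubled strand is displaced in every case with $k<i\leq l$. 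In particular your assertion that the conjugating $\rho$'s all lie in the ``non-interacting'' range under $c_l$ is false, since $\rho_{l-1}$ moves position $l$. The repair is standard but must be made explicit: either perform the word-level cabling that doubles the strand at its \emph{current} position as it travels through the word (so the index being doubled changes along the conjugator and only the total map on $VP_n$ is a homomorphism), or argue on diagrams throughout. With that correction the remainder of your plan --- the base cases $l=k+1$ and the reduction of the general case by conjugation --- does go through.
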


\subsection{Lifting defining relations of $VP_{3}$ to $VP_4$}

In the group $VP_3$ we have 6 relations:
 $$
\lambda_{12} \lambda_{13} \lambda_{23} = \lambda_{23} \lambda_{13} \lambda_{12},~~~
\lambda_{21} \lambda_{23} \lambda_{13} = \lambda_{13} \lambda_{23} \lambda_{21},~~~
\lambda_{13} \lambda_{12} \lambda_{32} = \lambda_{32} \lambda_{12} \lambda_{13},
$$
$$
\lambda_{31} \lambda_{32} \lambda_{12} = \lambda_{12} \lambda_{32} \lambda_{31},~~~
\lambda_{23} \lambda_{21} \lambda_{31} = \lambda_{31} \lambda_{21} \lambda_{23},~~~
\lambda_{32} \lambda_{31} \lambda_{21} = \lambda_{21} \lambda_{31} \lambda_{32}.
$$
Acting on these relations by degeneracy map $s_2$ we get 6 relations in $VP_4$. Let us analise these relations.

1) The image of the first relation  has the form
$$
\lambda_{12} \cdot \lambda_{14} (\lambda_{13} \cdot \lambda_{24})  \lambda_{23} = \lambda_{24} (\lambda_{23} \cdot  \lambda_{14}) \lambda_{13} \cdot \lambda_{12}.
$$
Using the commutativity relation
$$
\lambda_{13}  \lambda_{24} = \lambda_{24}  \lambda_{13},~~~\lambda_{23} \lambda_{14} = \lambda_{14} \lambda_{23},
$$
we get
$$
\lambda_{12}  \lambda_{14} \lambda_{24} \cdot \lambda_{13}  \lambda_{23} = \lambda_{24} \lambda_{14}   (\lambda_{23} \lambda_{13}  \lambda_{12}).
$$
Using the following relation of $VP_3$:
$$
\lambda_{23} \lambda_{13} \lambda_{12} = \lambda_{12} \lambda_{13} \lambda_{23},
$$
we get
$$
\lambda_{12} \lambda_{14} \lambda_{24} = \lambda_{24} \lambda_{14} \lambda_{12}.
$$
that is the long relation in $VP_4$.

2) The image of the second relation has the form
$$
 \lambda_{21} \cdot \lambda_{24} (\lambda_{23} \cdot  \lambda_{14})  \lambda_{13} = \lambda_{14} (\lambda_{13} \cdot  \lambda_{24}) \lambda_{23} \cdot \lambda_{21}.
$$
Using the the commutativity relations
$$
\lambda_{23} \lambda_{14} = \lambda_{14} \lambda_{23},~~~\lambda_{13} \lambda_{24} = \lambda_{24} \lambda_{13},
$$
we get
$$
 \lambda_{21}  \lambda_{24} \lambda_{14} \lambda_{23}  \lambda_{13} = \lambda_{14} \lambda_{24} (\lambda_{13} \lambda_{23} \lambda_{21}).
$$
From the  relation of $VP_3$:
$$
 \lambda_{13} \lambda_{23} \lambda_{21} =  \lambda_{24}  \lambda_{14} \lambda_{12},
$$
we get
$$
 \lambda_{21} \lambda_{24} \lambda_{14} =  \lambda_{14}  \lambda_{24} \lambda_{21},
$$
i.e. the long relation in $VP_4$.

3) The image of the third  relation  has the form
$$
 \lambda_{14} (\lambda_{13} \cdot \lambda_{12} \cdot \lambda_{32})  \lambda_{42}  = \lambda_{32} \lambda_{42} \cdot \lambda_{12} \cdot  \lambda_{14} \lambda_{13}.
$$
Using the following relation from $VP_3$:
$$
 \lambda_{13} \lambda_{12} \lambda_{32} =  \lambda_{32}  \lambda_{12} \lambda_{13},
$$
we get
$$
(\lambda_{14} \lambda_{32})  \lambda_{12} (\lambda_{13}  \lambda_{42})  = \lambda_{32} \lambda_{42}  \lambda_{12}   \lambda_{14} \lambda_{13}.
$$
Using the commutativity relations
$$
\lambda_{14} \lambda_{32} = \lambda_{32} \lambda_{14},~~~\lambda_{13} \lambda_{42} = \lambda_{42} \lambda_{13},
$$
we have
$$
\lambda_{32} \lambda_{14}  \lambda_{12} \lambda_{42} \lambda_{13}  = \lambda_{32} \lambda_{42}  \lambda_{12}  \lambda_{14} \lambda_{13}.
$$
After cancellation
we get
$$
 \lambda_{14} \lambda_{12} \lambda_{42} =  \lambda_{42}  \lambda_{12} \lambda_{14},
$$
i.e. the long relation in $VP_4$.

4) The image of the forth  relation  has the form
$$
 \lambda_{31} (\lambda_{41} \cdot \lambda_{32})  \lambda_{42} \cdot \lambda_{12} = \lambda_{12} \cdot \lambda_{32} (\lambda_{42} \cdot \lambda_{31})  \lambda_{41}.
$$
Using the commutativity relations
$$
\lambda_{41} \lambda_{32} = \lambda_{32} \lambda_{41},~~~\lambda_{42} \cdot \lambda_{31} = \lambda_{31} \cdot \lambda_{42},
$$
we get
$$
 \lambda_{31} \lambda_{32} \lambda_{41}  \lambda_{42}  \lambda_{12} = (\lambda_{12}  \lambda_{32} \lambda_{31}) \lambda_{42}  \lambda_{41}.
$$
Using the following relation from $VP_3$:
$$
 \lambda_{12} \lambda_{32} \lambda_{31} =  \lambda_{31}  \lambda_{32} \lambda_{12},
$$
after cancelations we get
$$
 \lambda_{41} \lambda_{42} \lambda_{12} =  \lambda_{12}  \lambda_{42} \lambda_{41},
$$
i.e. the long relation in $VP_4$.

5) The image of the firth relation  has the form
$$
 \lambda_{24} (\lambda_{23} \cdot \lambda_{21} \cdot \lambda_{31}) \lambda_{41} =  \lambda_{31} \lambda_{41} \cdot \lambda_{21} \cdot   \lambda_{24} \lambda_{23}.
$$
Using the following relation from $VP_3$:
$$
 \lambda_{23} \lambda_{21} \lambda_{31} =  \lambda_{31}  \lambda_{21} \lambda_{23},
$$
and the commutativity relations
$$
\lambda_{24} \lambda_{13} = \lambda_{13} \lambda_{24},~~~\lambda_{23} \lambda_{41} = \lambda_{41} \lambda_{23},
$$
we get
$$
 \lambda_{24} \lambda_{21} \lambda_{41} =  \lambda_{41}  \lambda_{21} \lambda_{24},
$$
i.e. the long relation in $VP_4$.

6) The image of the sixth  relation  has the form
$$
 \lambda_{32} (\lambda_{42} \cdot  \lambda_{31}) \lambda_{41} \cdot \lambda_{21} = \lambda_{21} \cdot \lambda_{31} (\lambda_{41} \cdot  \lambda_{32}) \lambda_{42}.
$$
Using the commutativity relations
$$
\lambda_{42} \lambda_{31} = \lambda_{31} \lambda_{42},~~~\lambda_{41} \lambda_{32} = \lambda_{32} \lambda_{41},
$$
we get
$$
 \lambda_{32} \lambda_{31} \lambda_{42} \lambda_{41}  \lambda_{21} = (\lambda_{21}  \lambda_{31} \lambda_{32}) \lambda_{41} \lambda_{42}.
$$
Using the
following relation from $VP_3$:
$$
 \lambda_{21} \lambda_{31} \lambda_{32} =  \lambda_{32}  \lambda_{31} \lambda_{21},
$$
we get
$$
 \lambda_{42} \lambda_{41} \lambda_{21} =  \lambda_{21}  \lambda_{41} \lambda_{42},
$$
i.e. the long relation in $VP_4$. Hence, we proved

\begin{lem} \label{l1}
From relations $\mathcal{R}^V(3)$, relations $s_2(\mathcal{R}^V(3))$ and the commutativity relations in $\mathcal{R}^V(4)$ follows  the next set of  relations in $\mathcal{R}^V(4)$:
$$
\lambda_{12} \lambda_{14} \lambda_{24} = \lambda_{24} \lambda_{14} \lambda_{12},~~~
\lambda_{21} \lambda_{24} \lambda_{14} = \lambda_{14} \lambda_{24} \lambda_{21},~~~
\lambda_{14} \lambda_{12} \lambda_{42} = \lambda_{42} \lambda_{12} \lambda_{14},
$$
$$
\lambda_{41} \lambda_{42} \lambda_{12} = \lambda_{12} \lambda_{42} \lambda_{41},~~~
\lambda_{24} \lambda_{21} \lambda_{41} = \lambda_{41} \lambda_{21} \lambda_{24},~~~
\lambda_{42} \lambda_{41} \lambda_{21} = \lambda_{21} \lambda_{41} \lambda_{42},
$$
i.e. the set of relations where the indexes of the generators lie in the set $\{ 1, 2, 4 \}$.
\end{lem}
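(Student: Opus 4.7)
The plan is a direct case-by-case verification. Both $\mathcal{R}^V(3)$ and the target set (the long relations in $\mathcal{R}^V(4)$ whose generators have indices only in $\{1,2,4\}$) consist of exactly six relations, one for each cyclic ordering of the three indices. I will show that applying $s_2$ to the $r$-th relation of $\mathcal{R}^V(3)$, together with the hypothesized commutativities from $\mathcal{R}^V(4)$ and a substitution using one relation of $\mathcal{R}^V(3)$ itself, yields the $r$-th target relation, giving a bijective correspondence.

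The execution proceeds as follows. First I would compute $s_2$ on each generator appearing, using Proposition \ref{p3.1}: generators whose index set is contained in $\{1,2\}$ are fixed, while any generator whose index set meets $\{3\}$ produces a two-letter word in $VP_4$ (for instance $s_2(\lambda_{13}) = \lambda_{14}\lambda_{13}$, $s_2(\lambda_{23}) = \lambda_{24}\lambda_{23}$, $s_2(\lambda_{31}) = \lambda_{41}\lambda_{31}$, and so on). Substituting these expansions into each length-three relation of $VP_3$ produces an equation of length six in $VP_4$. The next step is to rearrange each side by commuting those factors whose index pairs are disjoint --- expressions such as $\lambda_{13}\lambda_{24}$, $\lambda_{23}\lambda_{14}$, $\lambda_{32}\lambda_{14}$, $\lambda_{42}\lambda_{31}$, $\lambda_{41}\lambda_{32}$, all of which are available since they belong to the commutativity part of $\mathcal{R}^V(4)$ --- until a subword of the form $\lambda_{ki}\lambda_{kj}\lambda_{ij}$ with $\{i,j,k\}=\{1,2,3\}$ appears on one side. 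Replacing this subword via the corresponding $VP_3$ long relation matches the ``index-$3$ segment'' on both sides, and cancelling it leaves precisely the desired long relation on $\{1,2,4\}$.

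The main obstacle is purely bookkeeping: for each of the six starting relations one must identify which specific commutativities to invoke and which of the six $VP_3$ relations to substitute, and one must verify that the index-$3$ letters on the two sides line up correctly before cancellation. Once the expansions from Proposition \ref{p3.1} are in hand, the manipulations are mechanical rewrites in a free product modulo the given commutation and long relations, with no structural or inductive argument beyond this. The fact that exactly six $VP_3$ relations map to the six missing long relations in $\mathcal{R}^V(4)$ (with the generators on $\{1,2\}$-indices acting as ``passive'' letters throughout) is what makes the bookkeeping close up cleanly.
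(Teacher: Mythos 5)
Your plan is exactly the paper's proof: expand each of the six relations of $\mathcal{R}^V(3)$ under $s_2$ via Proposition \ref{p3.1}, shuffle the disjoint-index factors using the commutativity relations of $\mathcal{R}^V(4)$, substitute one long relation of $VP_3$ to align the index-$3$ letters, and cancel to obtain the corresponding long relation on $\{1,2,4\}$. The only slip is in your illustrative value $s_2(\lambda_{31})=\lambda_{41}\lambda_{31}$; Proposition \ref{p3.1} gives $s_2(\lambda_{31})=\lambda_{31}\lambda_{41}$ (for the generators $\lambda_{l,k}$ with $l>k$ and $i=l$ the new letter is appended on the right), and with the correct order the bookkeeping closes up exactly as you describe.
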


\medskip

Take the set $s_1(\mathcal{R}^V(3))$.

1) The image of the first relation  has the form
$$
 \lambda_{13} (\lambda_{12} \cdot \lambda_{14} \cdot \lambda_{24}) \lambda_{34} =  \lambda_{24} \lambda_{34}  \cdot \lambda_{14} \cdot \lambda_{13}  \lambda_{12}.
$$
Using the the following relation from Lemma \ref{l1}:
$$
 \lambda_{12} \lambda_{14} \lambda_{24} = \lambda_{24} \lambda_{14} \lambda_{12},
$$
we get
$$
(\lambda_{13} \lambda_{24}) \lambda_{14} (\lambda_{12} \lambda_{34}) =  \lambda_{24} \lambda_{34} \lambda_{14}  \lambda_{13}  \lambda_{12}.
$$
Using the commutativity relations
$$
\lambda_{13} \lambda_{24} = \lambda_{24} \lambda_{13},~~~\lambda_{12} \lambda_{34} = \lambda_{34} \lambda_{2}
$$
we have
$$
\lambda_{13} \lambda_{14} \lambda_{34} = \lambda_{34} \lambda_{14} \lambda_{13},
$$
i.e. the long relation in $VP_4$.

2) The image of the second relation has the form
$$
 \lambda_{21} (\lambda_{31} \cdot  \lambda_{24})  \lambda_{34} \cdot   \lambda_{14} = \lambda_{14}  \cdot  \lambda_{24} (\lambda_{34} \cdot  \lambda_{21})  \lambda_{31}.
$$
Using the commutativity relations
$$
\lambda_{31} \lambda_{24} = \lambda_{24} \lambda_{31},~~~\lambda_{34} \lambda_{21} = \lambda_{21} \lambda_{34}
$$
we have
$$
 \lambda_{21} \lambda_{24} \lambda_{31}  \lambda_{34}  \lambda_{14} = (\lambda_{14}  \lambda_{24} \lambda_{21}) \lambda_{34}  \lambda_{31}.
$$
Using the second relation from Lemma \ref{l1}:
$$
\lambda_{14}  \lambda_{24} \lambda_{21} =  \lambda_{21}  \lambda_{24} \lambda_{14},
$$
we get
$$
 \lambda_{31} \lambda_{34} \lambda_{14} =  \lambda_{14}  \lambda_{34} \lambda_{31},
$$
i.e. the long relation in $VP_4$.

3) The image of the third  relation has the form
$$
 \lambda_{14} \cdot  \lambda_{13} (\lambda_{12} \cdot   \lambda_{43})  \lambda_{42} = \lambda_{43}  (\lambda_{42} \cdot  \lambda_{13}) \lambda_{12} \cdot  \lambda_{14}.
$$
Using the commutativity relations
$$
\lambda_{12} \lambda_{43} = \lambda_{43} \lambda_{12},~~~\lambda_{42} \lambda_{13} = \lambda_{13} \lambda_{42}
$$
we have
$$
 \lambda_{14} \lambda_{13} \lambda_{43}  \lambda_{12}  \lambda_{42} = \lambda_{43}  \lambda_{13} (\lambda_{42} \lambda_{12}  \lambda_{14}).
$$
Using the third relation from Lemma \ref{l1}:
$$
\lambda_{42} \lambda_{12}  \lambda_{14} =  \lambda_{14} \lambda_{12}  \lambda_{42},
$$
we get
$$
 \lambda_{14} \lambda_{13} \lambda_{43} =  \lambda_{43}  \lambda_{13} \lambda_{14},
$$
i.e. the long relation in $VP_4$.

4) The image of the forth  relation has the form
$$
 \lambda_{41} \cdot \lambda_{43} (\lambda_{42} \cdot  \lambda_{13})  \lambda_{12} = \lambda_{13} (\lambda_{12} \cdot  \lambda_{43}) \lambda_{42} \cdot  \lambda_{41}.
$$
Using the commutativity relations
$$
\lambda_{42} \lambda_{13} = \lambda_{13} \lambda_{42},~~~\lambda_{12} \lambda_{43} = \lambda_{43} \lambda_{12}
$$
we have
$$
 \lambda_{41} \lambda_{43} \lambda_{13} \lambda_{42}  \lambda_{12} = \lambda_{13} \lambda_{43} (\lambda_{12} \lambda_{42}  \lambda_{41}).
$$
Using the forth relation from Lemma \ref{l1}:
$$
\lambda_{12} \lambda_{42}  \lambda_{41} = \lambda_{41} \lambda_{42}  \lambda_{12},
$$
we get
$$
 \lambda_{41} \lambda_{43} \lambda_{13} =  \lambda_{13}  \lambda_{43} \lambda_{41},
$$
i.e. the long relation in $VP_4$.

5) The image of the firth  relation has the form
$$
 \lambda_{24} (\lambda_{34} \cdot  \lambda_{21})  \lambda_{31} \cdot  \lambda_{41} = \lambda_{41} \cdot  \lambda_{21} (\lambda_{31} \cdot  \lambda_{24})  \lambda_{34}.
$$
Using the commutativity relations
$$
\lambda_{34} \lambda_{21} = \lambda_{21} \lambda_{34},~~~\lambda_{31} \lambda_{24} = \lambda_{24} \lambda_{31}
$$
we have
$$
 \lambda_{24} \lambda_{21} \lambda_{34}  \lambda_{31} \lambda_{41} = (\lambda_{41} \lambda_{21} \lambda_{24})  \lambda_{31}  \lambda_{34}.
$$
Using the firth relation from Lemma \ref{l1}:
$$
 \lambda_{41} \lambda_{21} \lambda_{24} =  \lambda_{24}  \lambda_{21} \lambda_{41},
$$
we get
$$
 \lambda_{34} \lambda_{31} \lambda_{41} =  \lambda_{41}  \lambda_{31} \lambda_{34},
$$
i.e. the long relation in $VP_4$.

6) The image of the sixth  relation has the form
$$
 \lambda_{43} (\lambda_{42} \cdot \lambda_{41} \cdot \lambda_{21}) \lambda_{31} = \lambda_{21} \lambda_{31} \cdot \lambda_{41} \cdot \lambda_{43} \lambda_{42}.
$$
using the
sixth relation from Lemma \ref{l1}:
$$
 \lambda_{42} \lambda_{41} \lambda_{21} =  \lambda_{21}  \lambda_{41} \lambda_{42},
$$
we get
$$
(\lambda_{43} \lambda_{21})  \lambda_{41} (\lambda_{42} \lambda_{31}) = \lambda_{21} \lambda_{31} \lambda_{41} \lambda_{43} \lambda_{42}.
$$
Using the commutativity relations
$$
\lambda_{43} \lambda_{21} = \lambda_{21} \lambda_{43},~~~\lambda_{42} \lambda_{31} = \lambda_{31} \lambda_{42},
$$
we get
$$
 \lambda_{43} \lambda_{41} \lambda_{31} =  \lambda_{31}  \lambda_{41} \lambda_{43},
$$
i.e. the long relation in $VP_4$. Hence, we proved

\begin{lem} \label{l2}
From relations $\mathcal{R}^V(3)$, relations $s_1(\mathcal{R}^V(3))$, $s_2(\mathcal{R}^V(3))$,  and commutativity relations in $\mathcal{R}^V(4)$ follows the next set of relations in $\mathcal{R}^V(4)$:
$$
\lambda_{13} \lambda_{14} \lambda_{34} = \lambda_{34} \lambda_{14} \lambda_{13},~~~
\lambda_{31} \lambda_{34} \lambda_{14} = \lambda_{14} \lambda_{34} \lambda_{31},~~~
\lambda_{14} \lambda_{13} \lambda_{43} = \lambda_{43} \lambda_{13} \lambda_{14},
$$
$$
\lambda_{41} \lambda_{43} \lambda_{13} = \lambda_{13} \lambda_{43} \lambda_{41},~~~
\lambda_{34} \lambda_{31} \lambda_{41} = \lambda_{41} \lambda_{31} \lambda_{34},~~~
\lambda_{43} \lambda_{41} \lambda_{31} = \lambda_{31} \lambda_{41} \lambda_{43},
$$
i.e. the set of relations where the indexes of the generators lie in the set $\{ 1, 3, 4 \}$.
\end{lem}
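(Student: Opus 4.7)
The plan is to derive each of the six long relations of $\mathcal{R}^V(4)$ on the index set $\{1,3,4\}$ by pushing the six long relations of $\mathcal{R}^V(3)$ forward along the degeneracy $s_1\colon VP_3\to VP_4$ and simplifying the resulting identities in $VP_4$ using Lemma~\ref{l1} together with the commutativity relations of $\mathcal{R}^V(4)$. The symmetry of the setup is that Lemma~\ref{l1} already supplies the long relations with indices in $\{1,2,4\}$, and applying $s_1$ to $VP_3$ injects relations that mix both $\{1,2,4\}$- and $\{1,3,4\}$-type generators, so a single clean cancellation should isolate the desired new relation.

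As a first step I would tabulate $s_1(\lambda_{ij})$ for all six generators of $VP_3$ by reading off Proposition~\ref{p3.1} with $i=2$: the two generators $\lambda_{13},\lambda_{31}$ (which do not involve the index $2$) shift to $\lambda_{14},\lambda_{41}$, while each of $\lambda_{12},\lambda_{21},\lambda_{23},\lambda_{32}$ splits into a product of two generators, namely $\lambda_{13}\lambda_{12}$, $\lambda_{21}\lambda_{31}$, $\lambda_{24}\lambda_{34}$, and $\lambda_{42}\lambda_{43}$, respectively. Substituting these expressions into a long relation of $\mathcal{R}^V(3)$ then produces a word equation of length five on each side inside $VP_4$.

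Next, for each of the six resulting equations, I would rearrange both sides using the commutativity relations of $\mathcal{R}^V(4)$, exploiting that any two generators $\lambda_{ab}$, $\lambda_{cd}$ whose four indices are pairwise distinct commute. The target configuration is one in which the index-$\{1,2,4\}$ subword, arising from the split pieces of the former index-$2$ generators, forms a contiguous block on one side of each expression. That block can then be replaced using the appropriate one of the six relations of Lemma~\ref{l1}, after which elementary cancellation on the two sides leaves exactly the long relation in $\mathcal{R}^V(4)$ with indices in $\{1,3,4\}$ that corresponds to the original relation of $\mathcal{R}^V(3)$.

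The main obstacle I expect is combinatorial rather than conceptual: for each of the six cases one must verify that every commutativity move used really involves a pair of generators with four pairwise distinct indices (so that the move is legal inside $\mathcal{R}^V(4)$), and that the residual word after the Lemma~\ref{l1} substitution is literally the target relation rather than a cyclic conjugate or an inverse of it. Apart from this bookkeeping, the argument is a single template applied six times with indices permuted according to the evident $\Sigma_3$-symmetry among the six relations of $\mathcal{R}^V(3)$, and the output is precisely the list of six relations claimed in the lemma.
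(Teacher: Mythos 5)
Your plan is exactly the paper's proof: apply $s_1$ to each of the six long relations of $\mathcal{R}^V(3)$ via Proposition~\ref{p3.1}, shuffle with the commutativity relations of $\mathcal{R}^V(4)$ so that the $\{1,2,4\}$-indexed subword becomes a contiguous block, replace it by the matching relation of Lemma~\ref{l1}, and cancel. One tabulation slip to fix before executing: Proposition~\ref{p3.1} gives $s_1(\lambda_{32})=\lambda_{43}\lambda_{42}$, not $\lambda_{42}\lambda_{43}$ as you wrote; since $\lambda_{43}$ and $\lambda_{42}$ share the index $4$ they do not commute, so the wrong order would block the cancellation in the sixth case (the other five entries of your table are correct).
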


\medskip

Take the set of relations $s_0(\mathcal{R}^V(3))$.

1) The image of the first  relation has the form
$$
\lambda_{13}  (\lambda_{23} \cdot \lambda_{14}) \lambda_{24} \cdot  \lambda_{34} =  \lambda_{34} \cdot  \lambda_{14}  (\lambda_{24} \cdot  \lambda_{13})  \lambda_{23}.
$$
Using the commutativity relations
$$
\lambda_{23} \lambda_{14} = \lambda_{14}  \lambda_{23},~~~\lambda_{24} \lambda_{13} = \lambda_{13} \lambda_{24},
$$
 we rewrite it in the form
$$
\lambda_{13} \lambda_{14}  \lambda_{23} \lambda_{24}   \lambda_{34} =  (\lambda_{34}   \lambda_{14} \lambda_{13}) \lambda_{24}  \lambda_{23}.
$$
Using  the first relation from Lemma \ref{l2}:
$$
\lambda_{34}  \lambda_{14}  \lambda_{13} = \lambda_{13}  \lambda_{14}  \lambda_{34},
$$
we get
 the following
 long relation in $VP_4$:
$$
 \lambda_{23} \lambda_{24} \lambda_{34} =  \lambda_{34} \lambda_{24}  \lambda_{23}.
$$

2) The image of the sixth  relation has the form
$$
 \lambda_{32} (\lambda_{31} \cdot \lambda_{34} \cdot \lambda_{14})  \lambda_{24} = \lambda_{14}  \lambda_{24} \cdot \lambda_{34} \cdot \lambda_{32}  \lambda_{31}.
$$
Using the second relation from Lemma \ref{l2}:
$$
\lambda_{31}  \lambda_{34} \lambda_{14} =  \lambda_{14}  \lambda_{34} \lambda_{31},
$$
and  the commutativity relations
$$
\lambda_{32} \lambda_{14} = \lambda_{14} \lambda_{32},~~~\lambda_{31} \lambda_{24} = \lambda_{24} \lambda_{31}
$$
we have
$$
 \lambda_{32} \lambda_{34} \lambda_{24} = \lambda_{24}  \lambda_{34} \lambda_{32},
$$
i.e. the long relation in $VP_4$.

3) The image of the third  relation has the form
$$
 \lambda_{14} (\lambda_{24} \cdot \lambda_{13})  \lambda_{23} \cdot \lambda_{43} = \lambda_{43} \cdot \lambda_{13} (\lambda_{23} \cdot \lambda_{14})  \lambda_{24}.
$$
Using the commutativity relations
$$
\lambda_{24} \lambda_{13} = \lambda_{13} \lambda_{24},~~~\lambda_{23} \lambda_{14} = \lambda_{14} \lambda_{23},
$$
we have
$$
 \lambda_{14} \lambda_{13} \lambda_{24}  \lambda_{23}  \lambda_{43} = (\lambda_{43}  \lambda_{13} \lambda_{14}) \lambda_{23}  \lambda_{24}.
$$
Using the third relation from Lemma \ref{l2}:
$$
\lambda_{43}  \lambda_{13} \lambda_{14} =  \lambda_{14}  \lambda_{13} \lambda_{43},
$$
we get
$$
 \lambda_{24} \lambda_{23} \lambda_{43} =  \lambda_{43}  \lambda_{23} \lambda_{24},
$$
i.e. the long relation in $VP_4$.

4) The image of the forth  relation has the form
$$
 \lambda_{42} (\lambda_{41} \cdot \lambda_{43} \cdot \lambda_{13})  \lambda_{23} = \lambda_{13} \lambda_{23} \cdot \lambda_{43} \cdot \lambda_{42}  \lambda_{41}.
$$
Using the forth relation from Lemma \ref{l2}:
$$
\lambda_{41} \lambda_{43} \lambda_{13} = \lambda_{13} \lambda_{43}  \lambda_{41},
$$
we get
$$
 (\lambda_{42} \lambda_{13}) \lambda_{43} (\lambda_{41}  \lambda_{23}) = \lambda_{23} \lambda_{43} \lambda_{42}  \lambda_{41}.
$$
Using the commutativity relations
$$
\lambda_{42} \lambda_{13} = \lambda_{13} \lambda_{42},~~~\lambda_{41} \lambda_{23} = \lambda_{23} \lambda_{41}
$$
we get
$$
 \lambda_{42} \lambda_{43} \lambda_{23} =  \lambda_{23}  \lambda_{43} \lambda_{42},
$$
i.e. the long relation in $VP_4$.

5) The image of the firth  relation has the form
$$
 \lambda_{34} \cdot \lambda_{32} (\lambda_{31} \cdot \lambda_{42}) \lambda_{41} = \lambda_{42} (\lambda_{41} \cdot \lambda_{32}) \lambda_{31} \cdot \lambda_{34}.
$$
Using the commutativity relations
$$
\lambda_{31}  \lambda_{42} = \lambda_{42} \lambda_{31},~~~\lambda_{41} \lambda_{32} = \lambda_{32} \lambda_{41},
$$
we get
$$
 \lambda_{34} \lambda_{32} \lambda_{42}  \lambda_{31} \lambda_{41} = \lambda_{42} \lambda_{32} (\lambda_{41} \lambda_{31} \lambda_{34}).
$$
Using the firth relation from Lemma \ref{l2}:
$$
\lambda_{41} \lambda_{31} \lambda_{34} = \lambda_{34} \lambda_{31} \lambda_{41},
$$
we get
$$
 \lambda_{34} \lambda_{32} \lambda_{42} =  \lambda_{42}  \lambda_{32} \lambda_{34},
$$
i.e. the long relation in $VP_4$.

6) The image of the sixth  relation has the form
$$
 \lambda_{43} \cdot \lambda_{42} (\lambda_{41} \cdot  \lambda_{32}) \lambda_{31} = \lambda_{32} (\lambda_{31} \cdot  \lambda_{42}) \lambda_{41} \cdot  \lambda_{43}.
$$
Using the commutativity relations
$$
\lambda_{41} \lambda_{32} = \lambda_{32} \lambda_{41},~~~\lambda_{31} \lambda_{42} = \lambda_{42} \lambda_{31},
$$
we get
$$
 \lambda_{43} \lambda_{42} \lambda_{32} \lambda_{41} \lambda_{31} = \lambda_{32} \lambda_{42} (\lambda_{31} \lambda_{41}  \lambda_{43}).
$$
Using the
sixth relation from Lemma \ref{l2}:
$$
\lambda_{31} \lambda_{41} \lambda_{43} =  \lambda_{43} \lambda_{41} \lambda_{31},
$$
we get
$$
 \lambda_{43} \lambda_{42} \lambda_{32} =  \lambda_{32}  \lambda_{42} \lambda_{43},
$$
i.e. the long relation in $VP_4$. Hence, we proved

\begin{lem}
From relations $\mathcal{R}^V(3)$, relations $s_i(\mathcal{R}^V(3))$, $i = 0, 1, 2$, and commutativity relations in $\mathcal{R}^V(4)$ follows the next set relations in $\mathcal{R}^V(4)$:
$$
\lambda_{23} \lambda_{24} \lambda_{34} = \lambda_{34} \lambda_{24} \lambda_{23},~~~
\lambda_{32} \lambda_{34} \lambda_{24} = \lambda_{24} \lambda_{34} \lambda_{32},~~~
\lambda_{24} \lambda_{23} \lambda_{43} = \lambda_{43} \lambda_{23} \lambda_{24},
$$
$$
\lambda_{42} \lambda_{43} \lambda_{23} = \lambda_{23} \lambda_{43} \lambda_{42},~~~
\lambda_{34} \lambda_{32} \lambda_{42} = \lambda_{42} \lambda_{32} \lambda_{34},~~~
\lambda_{43} \lambda_{42} \lambda_{32} = \lambda_{32} \lambda_{42} \lambda_{43},
$$
i.e. the set of relations where the indexes of the generators lie in the set $\{ 2, 3, 4 \}$.
\end{lem}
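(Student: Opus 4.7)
My approach would imitate the proofs of Lemmas \ref{l1} and \ref{l2}, simply swapping $s_2$ and $s_1$ for $s_0$. By Proposition \ref{p3.1}, the degeneracy $s_0 : VP_3 \to VP_4$ acts on the generators of $VP_3$ as $s_0(\lambda_{12})=\lambda_{13}\lambda_{23}$, $s_0(\lambda_{13})=\lambda_{14}\lambda_{24}$, $s_0(\lambda_{23})=\lambda_{34}$, and symmetrically $s_0(\lambda_{21})=\lambda_{32}\lambda_{31}$, $s_0(\lambda_{31})=\lambda_{42}\lambda_{41}$, $s_0(\lambda_{32})=\lambda_{43}$. Applying $s_0$ to each of the six defining long relations of $VP_3$ yields six identities in $VP_4$; the objective is to extract from each of them the corresponding long relation supported on $\{2,3,4\}$.

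\textbf{Simplification template.} Each derivation proceeds in three stages, following the pattern already visible in Lemmas \ref{l1} and \ref{l2}. First, use the commutativity relations $\lambda_{pq}\lambda_{rs}=\lambda_{rs}\lambda_{pq}$ in $\mathcal{R}^V(4)$ (available because the four subscripts are distinct) to rearrange the image so that a triple product whose subscripts lie in $\{1,3,4\}$ becomes adjacent on one side. Second, invoke the corresponding long relation on $\{1,3,4\}$ from Lemma \ref{l2} to reverse that triple. Third, cancel the common factors that now match on both sides, isolating the target long relation on $\{2,3,4\}$. For instance, applying $s_0$ to $\lambda_{12}\lambda_{13}\lambda_{23}=\lambda_{23}\lambda_{13}\lambda_{12}$ gives $\lambda_{13}\lambda_{23}\lambda_{14}\lambda_{24}\lambda_{34}=\lambda_{34}\lambda_{14}\lambda_{24}\lambda_{13}\lambda_{23}$; after rewriting with commutativity and using $\lambda_{34}\lambda_{14}\lambda_{13}=\lambda_{13}\lambda_{14}\lambda_{34}$ from Lemma \ref{l2}, cancellation of $\lambda_{13}\lambda_{14}$ on both sides yields $\lambda_{23}\lambda_{24}\lambda_{34}=\lambda_{34}\lambda_{24}\lambda_{23}$, the first relation in the statement.

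\textbf{Main obstacle.} The argument is essentially mechanical once Proposition \ref{p3.1} is in hand, so the only genuine difficulty is the bookkeeping: one must correctly pair each of the six image equations with the unique long relation of Lemma \ref{l2} needed to complete its cancellation, and pick the correct commutativity relations to use beforehand. The matching follows the same cyclic pattern used in the previous two lemmas—long relations of the same cyclic structure correspond to one another under the index substitution carrying the parent triple $\{1,2,3\}$ into the target triple—so no new idea is required beyond careful tracking of indices across all six cases.
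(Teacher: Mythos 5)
Your proposal is correct and follows essentially the same route as the paper: apply $s_0$ to each of the six long relations of $VP_3$ using the degeneracy formulas of Proposition \ref{p3.1}, rearrange with the commutativity relations of $\mathcal{R}^V(4)$, invoke the matching long relation on $\{1,3,4\}$ from Lemma \ref{l2}, and cancel to isolate the corresponding relation on $\{2,3,4\}$. Your worked first case matches the paper's computation exactly, and the remaining five cases proceed by the same bookkeeping as you describe.
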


\subsection{Lifting the commutativity relations from $\mathcal{R}^V(4)$ into $\mathcal{R}^V(5)$}

We have to show that $\mathcal{R}^V(5) = \langle \mathcal{R}^V(4), s_i(\mathcal{R}^V(4)), i = 0, 1, 2, 3 \rangle$. At first consider the commutativity relations
$$
[\lambda_{i4}^*, \lambda_{kl}^*], ~~1 \leq i \leq  3,~~ 1 \leq k < l \leq 3,
$$
in $\mathcal{R}^V(4)$. We divide them on the four groups:

1-st group: $[\lambda_{34}, \lambda_{12}] = [\lambda_{24}, \lambda_{13}] = [\lambda_{14}, \lambda_{23}] = 1$;

2-nd group: $[\lambda_{34}, \lambda_{21}] = [\lambda_{24}, \lambda_{31}] = [\lambda_{14}, \lambda_{32}] = 1$;

3-d group: $[\lambda_{43}, \lambda_{21}] = [\lambda_{42}, \lambda_{31}] = [\lambda_{41}, \lambda_{32}] = 1$;

4-th group: $[\lambda_{43}, \lambda_{12}] = [\lambda_{42}, \lambda_{13}] = [\lambda_{41}, \lambda_{23}] = 1$.

Take the third  relation from the 1-st group and acting on it by $s_i$, $i = 0, 1, 2, 3$, we get the following relations:
$$
[\lambda_{15} \lambda_{25},  \lambda_{34}] = [\lambda_{15}, \lambda_{24} \lambda_{34}] = [\lambda_{15},  \lambda_{24}  \lambda_{23}] = [\lambda_{15} \lambda_{14},  \lambda_{23}] = 1.
$$
Using the commutativity relation
$$
\lambda_{14} \lambda_{23} = \lambda_{23} \lambda_{14},
$$
which hold in $VP_4$, from the last relation we have
\begin{equation} \label{cc1}
[\lambda_{15},  \lambda_{23}] = 1.
\end{equation}
With considering (\ref{cc1}) we get
$$
[\lambda_{15},  \lambda_{24}] = 1.
$$
Then from the second relation follows relation $[\lambda_{15},  \lambda_{34}] = 1$ and from the first relation follows $[\lambda_{25},  \lambda_{34}] = 1$.
Hence, we have proven

\begin{lem} \label{cl1}
From the lifting $s_i$, $i = 0, 1, 2, 3$, of the relation $[\lambda_{14},  \lambda_{23}] = 1$ and the commutativity relations in $\mathcal{R}^V(4)$ follows the commutativity relations
$$
[\lambda_{15},  \lambda_{23}] = [\lambda_{15},  \lambda_{24}] =[\lambda_{15},  \lambda_{34}] = [\lambda_{25},  \lambda_{34}] = 1,
$$
from $\mathcal{R}^V(5)$.
\end{lem}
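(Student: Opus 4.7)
The plan is to apply the four degeneracies $s_0, s_1, s_2, s_3 : VP_4 \to VP_5$ to the single relation $[\lambda_{14}, \lambda_{23}] = 1$, expand each image with the formulas of Proposition~\ref{p3.1}, and then peel off the four desired commutators one factor at a time.

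First I would record the images of the two generators using Proposition~\ref{p3.1}. For the left factor, $s_0(\lambda_{14}) = \lambda_{15}\lambda_{25}$ (case $i=k=1$), $s_1(\lambda_{14}) = s_2(\lambda_{14}) = \lambda_{15}$ (case $k<i<l$), and $s_3(\lambda_{14}) = \lambda_{15}\lambda_{14}$ (case $i=l$). For the right factor, $s_0(\lambda_{23}) = \lambda_{34}$ (case $i<k$), $s_1(\lambda_{23}) = \lambda_{24}\lambda_{34}$ (case $i=k$), $s_2(\lambda_{23}) = \lambda_{24}\lambda_{23}$ (case $i=l$), and $s_3(\lambda_{23}) = \lambda_{23}$ (case $i>l$). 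Since each $s_i$ is a group homomorphism, applying it to $[\lambda_{14}, \lambda_{23}] = 1$ yields the four relations
\begin{align*}
[\lambda_{15}\lambda_{25},\, \lambda_{34}] &= 1,\\
[\lambda_{15},\, \lambda_{24}\lambda_{34}] &= 1,\\
[\lambda_{15},\, \lambda_{24}\lambda_{23}] &= 1,\\
[\lambda_{15}\lambda_{14},\, \lambda_{23}] &= 1
\end{align*}
in $VP_5$.

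Next I would extract the four target commutators by the elementary identity that $[uv,w]=1$ together with $[v,w]=1$ forces $[u,w]=1$, and the symmetric version $[u,vw]=1$ with $[u,v]=1$ forces $[u,w]=1$. The cascade must be carried out in a fixed order: starting from the fourth relation $[\lambda_{15}\lambda_{14}, \lambda_{23}]=1$, the already-known commutativity $[\lambda_{14},\lambda_{23}]=1$ from $\mathcal{R}^V(4)$ cancels the $\lambda_{14}$ factor and produces $[\lambda_{15},\lambda_{23}]=1$. Substituting this into the third relation yields $[\lambda_{15},\lambda_{24}]=1$; substituting the latter into the second relation gives $[\lambda_{15},\lambda_{34}]=1$; and finally, feeding this into the first relation delivers $[\lambda_{25},\lambda_{34}]=1$.

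The only real care point is arranging the four cancellations in the correct sequence, so that at each step the exactly-needed commutator has already been produced from the preceding step. The sole input from $\mathcal{R}^V(4)$ beyond the degeneracy images is the single commutator $[\lambda_{14},\lambda_{23}]=1$; the three other commutators needed to drive the cascade are bootstrapped from the $s_i$-images themselves. I do not anticipate any serious obstacle beyond this bookkeeping, which is characteristic of the general strategy for lifting defining relations from $VP_n$ to $VP_{n+1}$ through the degeneracy homomorphisms.
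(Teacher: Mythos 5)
Your proposal is correct and follows essentially the same route as the paper: apply $s_0,\dots,s_3$ to $[\lambda_{14},\lambda_{23}]=1$, compute the images via Proposition~\ref{p3.1}, and cascade the cancellations starting from the $s_3$-image together with the relation $[\lambda_{14},\lambda_{23}]=1$ from $\mathcal{R}^V(4)$. The four degeneracy images and the order of the peeling steps coincide exactly with those in the paper's argument.
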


Take the second  relation in the 1-st group and acting on it by $s_i$, $i = 0, 1, 2, 3$, we get the following relations:
$$
[\lambda_{35}, \lambda_{14} \lambda_{24}] = [\lambda_{25} \lambda_{35}, \lambda_{14}] = [\lambda_{25}, \lambda_{14}  \lambda_{13}] = [\lambda_{25} \lambda_{24},  \lambda_{13}] = 1.
$$
Using the commutativity relation
$$
\lambda_{24} \lambda_{13} = \lambda_{13} \lambda_{24},
$$
which hold in $VP_4$, from the last relation we have
\begin{equation} \label{c1}
[\lambda_{25},  \lambda_{14}] = 1.
\end{equation}
Then from the third relation follows $[\lambda_{25},  \lambda_{13}] = 1$. From the second  relation follows $[\lambda_{35},  \lambda_{14}] = 1$
 and from the first relation follows $[\lambda_{35},  \lambda_{24}] = 1$.
Hence, we have proven

\begin{lem} \label{cll2}
From the lifting $s_i$, $i = 0, 1, 2, 3$, of the relation $[\lambda_{24},  \lambda_{13}] = 1$ and the commutativity relations in $\mathcal{R}^V(4)$ follows the commutativity relations
$$
[\lambda_{25},  \lambda_{14}] = [\lambda_{25},  \lambda_{13}] =[\lambda_{35},  \lambda_{14}] = [\lambda_{35},  \lambda_{24}] = 1,
$$
from $\mathcal{R}^V(5)$.
\end{lem}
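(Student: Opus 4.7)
The plan is to apply the degeneracy formulas of Proposition~\ref{p3.1} to compute each $s_j([\lambda_{24},\lambda_{13}]) = 1$ as a compound commutator in $VP_5$, and then extract the four desired simple commutators in a carefully chosen order, peeling off factors one at a time by using the $\mathcal{R}^V(4)$ commutativity relations together with commutators already derived in earlier steps.

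Using Proposition~\ref{p3.1}, for $(k,l) = (2,4)$ I get $s_0(\lambda_{24}) = \lambda_{35}$, $s_1(\lambda_{24}) = \lambda_{25}\lambda_{35}$, $s_2(\lambda_{24}) = \lambda_{25}$, $s_3(\lambda_{24}) = \lambda_{25}\lambda_{24}$, and for $(k,l) = (1,3)$ I get $s_0(\lambda_{13}) = \lambda_{14}\lambda_{24}$, $s_1(\lambda_{13}) = \lambda_{14}$, $s_2(\lambda_{13}) = \lambda_{14}\lambda_{13}$, $s_3(\lambda_{13}) = \lambda_{13}$. The four liftings of $[\lambda_{24},\lambda_{13}] = 1$ are therefore
\begin{align*}
[\lambda_{35},\,\lambda_{14}\lambda_{24}] &= 1, \\
[\lambda_{25}\lambda_{35},\,\lambda_{14}] &= 1, \\
[\lambda_{25},\,\lambda_{14}\lambda_{13}] &= 1, \\
[\lambda_{25}\lambda_{24},\,\lambda_{13}] &= 1.
\end{align*}

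I would then process these in the order $j = 3, 2, 1, 0$. From the fourth relation $\lambda_{25}\lambda_{24}\lambda_{13} = \lambda_{13}\lambda_{25}\lambda_{24}$; commuting $\lambda_{24}$ past $\lambda_{13}$ on the left via the $\mathcal{R}^V(4)$ relation $[\lambda_{24},\lambda_{13}] = 1$ and cancelling $\lambda_{24}$ on the right yields $[\lambda_{25},\lambda_{13}] = 1$. Substituting this into the third relation removes $\lambda_{13}$ and produces $[\lambda_{25},\lambda_{14}] = 1$. Using this in the second relation, $\lambda_{25}\lambda_{35}\lambda_{14} = \lambda_{14}\lambda_{25}\lambda_{35}$ becomes $\lambda_{25}\lambda_{35}\lambda_{14} = \lambda_{25}\lambda_{14}\lambda_{35}$, and cancelling $\lambda_{25}$ gives $[\lambda_{35},\lambda_{14}] = 1$. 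Finally, using this last fact in the first relation, $\lambda_{35}\lambda_{14}\lambda_{24} = \lambda_{14}\lambda_{24}\lambda_{35}$ becomes $\lambda_{14}\lambda_{35}\lambda_{24} = \lambda_{14}\lambda_{24}\lambda_{35}$, and cancelling $\lambda_{14}$ yields $[\lambda_{35},\lambda_{24}] = 1$.

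The only real subtlety is that $\lambda_{14}$ and $\lambda_{24}$ share the index $4$, so they do \emph{not} commute as a $\mathcal{R}^V(4)$ relation; this is why the $j=0$ equation cannot be simplified first and must be left for last, once $[\lambda_{35},\lambda_{14}] = 1$ is available. Ordering the extractions as $j=3\to 2\to 1\to 0$ is essential, since each step consumes the commutator derived in the previous step. All other manipulations are straightforward cancellations in the free-group sense, so the bookkeeping of this sequential order is the main point to get right.
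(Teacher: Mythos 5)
Your proposal is correct and follows essentially the same route as the paper: compute the four liftings via Proposition~\ref{p3.1} (your formulas for $s_j(\lambda_{24})$ and $s_j(\lambda_{13})$ all match), then peel off the commutators in the order $j=3\to2\to1\to0$, each step consuming the one before. In fact your chain $[\lambda_{25},\lambda_{13}]\Rightarrow[\lambda_{25},\lambda_{14}]\Rightarrow[\lambda_{35},\lambda_{14}]\Rightarrow[\lambda_{35},\lambda_{24}]$ is the logically consistent version of the paper's argument, whose text transposes the first two conclusions.
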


Take the first relation in the 1-st group and acting on it by $s_i$, $i = 0, 1, 2, 3$, we get the following relations:
$$
[\lambda_{45}, \lambda_{13} \lambda_{23}] = [\lambda_{45}, \lambda_{13} \lambda_{12}] = [\lambda_{35} \lambda_{45},  \lambda_{12}] = [ \lambda_{35} \lambda_{34},  \lambda_{12}] = 1.
$$
Using the commutativity relation $[\lambda_{34},  \lambda_{12}] = 1$, rewrite the last relation in the form
$$
[\lambda_{35}, \lambda_{12}] = 1.
$$
 Then from the third relation follows
$[\lambda_{45},  \lambda_{12}] = 1.$
Then from the second relation follows
$[\lambda_{45}, \lambda_{13}] = 1,$
and hence from the first relation we get
$[\lambda_{45}, \lambda_{23}] = 1.$
Hence, we have proven

\begin{lem} \label{cl2}
From the lifting $s_i$, $i = 0, 1, 2, 3$, of the relation $[\lambda_{34},  \lambda_{12}] = 1$, the commutativity relations in $\mathcal{R}^V(4)$ and relations from Lemma \ref{cll2} follow the commutativity relations
$$
[\lambda_{35},  \lambda_{12}] = [\lambda_{45},  \lambda_{12}] =[\lambda_{45},  \lambda_{13}] = [\lambda_{45},  \lambda_{23}] = 1,
$$
from $VP_5$.
\end{lem}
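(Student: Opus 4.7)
The plan is to apply each of the four degeneracy homomorphisms $s_0, s_1, s_2, s_3\colon VP_4 \to VP_5$ to the single relation $[\lambda_{34}, \lambda_{12}] = 1$, expand the resulting relations using Proposition \ref{p3.1}, and then extract the four desired commutators by peeling off factors one at a time.

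Proposition \ref{p3.1} should transform the commutator under each $s_i$ into a relation of the form $[A, B] = 1$ in which exactly one of $A$ or $B$ is a length-$2$ product (the doubled factor appearing precisely when $i$ equals $k$ or $l$ of the affected generator). The expected images form a coherent chain
$$
[\lambda_{45}, \lambda_{13} \lambda_{23}] = [\lambda_{45}, \lambda_{13} \lambda_{12}] = [\lambda_{35} \lambda_{45}, \lambda_{12}] = [\lambda_{35} \lambda_{34}, \lambda_{12}] = 1,
$$
which I would process in the order $s_3, s_2, s_1, s_0$. From $[\lambda_{35} \lambda_{34}, \lambda_{12}] = 1$ together with the commutativity $[\lambda_{34}, \lambda_{12}] = 1$ in $\mathcal{R}^V(4)$, one obtains $[\lambda_{35}, \lambda_{12}] = 1$. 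Inserting this into $[\lambda_{35} \lambda_{45}, \lambda_{12}] = 1$ yields $[\lambda_{45}, \lambda_{12}] = 1$. This in turn simplifies $[\lambda_{45}, \lambda_{13} \lambda_{12}] = 1$ to $[\lambda_{45}, \lambda_{13}] = 1$, and finally $[\lambda_{45}, \lambda_{13} \lambda_{23}] = 1$ collapses to $[\lambda_{45}, \lambda_{23}] = 1$. Any auxiliary commutator required to rearrange products at intermediate stages is already supplied by the commutativity relations of $\mathcal{R}^V(4)$ and by the conclusions of Lemma \ref{cll2}.

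The main obstacle is purely combinatorial: correctly applying the five-case description of Proposition \ref{p3.1} at the borderline cases $i = k$ and $i = l$, which produce genuine length-$2$ products rather than a simple index shift. Once the four expansions above are verified, the cascade reduces to the elementary principle that $[xy, z] = 1$ together with $[y, z] = 1$ forces $[x, z] = 1$ (and symmetrically on the right argument), so the whole argument becomes a short sequence of cancellations proceeding through the four liftings in decreasing order of $i$.
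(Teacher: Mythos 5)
Your proposal is correct and coincides with the paper's own argument: the four images under $s_0,\dots,s_3$ are exactly the chain $[\lambda_{45},\lambda_{13}\lambda_{23}]=[\lambda_{45},\lambda_{13}\lambda_{12}]=[\lambda_{35}\lambda_{45},\lambda_{12}]=[\lambda_{35}\lambda_{34},\lambda_{12}]=1$, and the paper peels them off in the same decreasing order, starting from $[\lambda_{34},\lambda_{12}]=1$ to get $[\lambda_{35},\lambda_{12}]=1$ and then cascading to the remaining three commutators.
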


\bigskip

Take the 2-nd group of commutativity relations:
$$
[\lambda_{34}, \lambda_{21}] = [\lambda_{24}, \lambda_{31}] = [\lambda_{14}, \lambda_{32}] = 1.
$$
Acting on the third relation by $s_i$, $i = 0, 1, 2, 3$, we get
$$
[\lambda_{15} \lambda_{25}, \lambda_{43}] = [\lambda_{15},  \lambda_{43} \lambda_{42}] = [\lambda_{15}, \lambda_{32} \lambda_{42}] = [\lambda_{15} \lambda_{14}, \lambda_{32}] = 1.
$$
Using the commutativity relations in $VP_4$, rewrite the last relation in the form
\begin{equation} \label{com22}
[\lambda_{15}, \lambda_{32}] = 1.
\end{equation}
From  (\ref{com22}), the third relation gives  $[\lambda_{15}, \lambda_{42}] = 1$. Then from the second relation follows that $[\lambda_{15}, \lambda_{43}] = 1$ and from the first relation follows that $[\lambda_{25}, \lambda_{43}] = 1$. We proved

\begin{lem}
Lifting $s_i$, $i = 0, 1, 2, 3$, the commutativity relation $[\lambda_{14}, \lambda_{32}] = 1$ and the commutativity relations of $VP_4$ give relations
$$
[\lambda_{15}, \lambda_{32}] = [\lambda_{15}, \lambda_{42}] = [\lambda_{15}, \lambda_{43}] = [\lambda_{25}, \lambda_{43}] = 1.
$$
\end{lem}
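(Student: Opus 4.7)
The plan is to follow the template established in Lemmas~\ref{cl1}, \ref{cll2}, and~\ref{cl2}: apply Proposition~\ref{p3.1} to compute the four images $s_i([\lambda_{14}, \lambda_{32}])$ for $i = 0, 1, 2, 3$, and then, using the $VP_4$ commutativity relations together with the new relations as they are produced, peel off the commutators one at a time.

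First I would compute the degeneracy images. For $\lambda_{14}$ (with $k = 1$, $l = 4$), Proposition~\ref{p3.1} gives $s_0\lambda_{14} = \lambda_{15}\lambda_{25}$, $s_1\lambda_{14} = \lambda_{15}$, $s_2\lambda_{14} = \lambda_{15}$, and $s_3\lambda_{14} = \lambda_{15}\lambda_{14}$. For $\lambda_{32}$ (with $l = 3$, $k = 2$), the same proposition gives $s_0\lambda_{32} = \lambda_{43}$, $s_1\lambda_{32} = \lambda_{43}\lambda_{42}$, $s_2\lambda_{32} = \lambda_{32}\lambda_{42}$, and $s_3\lambda_{32} = \lambda_{32}$. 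Hence applying $s_i$ to $[\lambda_{14}, \lambda_{32}] = 1$ yields, in $VP_5$, the four relations $[\lambda_{15}\lambda_{25}, \lambda_{43}] = 1$, $[\lambda_{15}, \lambda_{43}\lambda_{42}] = 1$, $[\lambda_{15}, \lambda_{32}\lambda_{42}] = 1$, and $[\lambda_{15}\lambda_{14}, \lambda_{32}] = 1$.

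Next I would extract the desired commutators in the order $s_3 \to s_2 \to s_1 \to s_0$. Beginning with $[\lambda_{15}\lambda_{14}, \lambda_{32}] = 1$ and using $[\lambda_{14}, \lambda_{32}] = 1$ from $\mathcal{R}^V(4)$, I deduce $[\lambda_{15}, \lambda_{32}] = 1$. Substituting this into $[\lambda_{15}, \lambda_{32}\lambda_{42}] = 1$ yields $[\lambda_{15}, \lambda_{42}] = 1$. Substituting that into $[\lambda_{15}, \lambda_{43}\lambda_{42}] = 1$ yields $[\lambda_{15}, \lambda_{43}] = 1$. Finally, combining $[\lambda_{15}, \lambda_{43}] = 1$ with $[\lambda_{15}\lambda_{25}, \lambda_{43}] = 1$ gives $[\lambda_{25}, \lambda_{43}] = 1$, exactly the four relations in the statement.

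There is no substantive obstacle: the argument is the same cascade used for the 1st group of commutativity relations in Lemmas~\ref{cl1}--\ref{cl2}. The only point demanding care is the branch selection in Proposition~\ref{p3.1}, since $\lambda_{32}$ must be treated as a $\lambda_{l,k}$-type generator with $l = 3$, $k = 2$, and the cases $i = k$ and $i = l$ give distinct two-factor expansions that must be read off correctly before the commutator manipulations are performed.
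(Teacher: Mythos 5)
Your proposal is correct and coincides with the paper's own proof essentially verbatim: the four images $[\lambda_{15}\lambda_{25},\lambda_{43}]=[\lambda_{15},\lambda_{43}\lambda_{42}]=[\lambda_{15},\lambda_{32}\lambda_{42}]=[\lambda_{15}\lambda_{14},\lambda_{32}]=1$ are exactly those the paper lists, and your cascade (extract $[\lambda_{15},\lambda_{32}]=1$ from the $s_3$-image using $[\lambda_{14},\lambda_{32}]=1$, then peel off $[\lambda_{15},\lambda_{42}]$, $[\lambda_{15},\lambda_{43}]$, $[\lambda_{25},\lambda_{43}]$ in that order) is the same deduction order the paper uses. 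No gaps.
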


Acting on the second relation  by $s_i$, $i = 0, 1, 2, 3$, we get
$$
[\lambda_{35}, \lambda_{42} \lambda_{41}] = [\lambda_{25} \lambda_{35}, \lambda_{41}] = [\lambda_{25}, \lambda_{31} \lambda_{41}] = [\lambda_{25} \lambda_{24}, \lambda_{31}] = 1.
$$
Using the relation $[\lambda_{24}, \lambda_{31}] = 1$ in $VP_4$ from the last relation follows that
$$
[\lambda_{25}, \lambda_{31}] = 1.
$$
Hence from the third relation follows that $[\lambda_{25}, \lambda_{41}] = 1$, from the second relation follows that $[\lambda_{35}, \lambda_{41}] = $ and from the first relation follows that $[\lambda_{35}, \lambda_{42}] = 1$. We proved

\begin{lem}
Lifting $s_i$, $i = 0, 1, 2, 3$, of the commutativity relation $[\lambda_{24}, \lambda_{31}] = 1$ and the commutativity relations of $VP_4$ give relations
$$
[\lambda_{25}, \lambda_{31}] = [\lambda_{25}, \lambda_{41}] = [\lambda_{35}, \lambda_{41}] = [\lambda_{35}, \lambda_{42}] = 1.
$$
\end{lem}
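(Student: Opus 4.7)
The plan is to apply each of the four degeneracy homomorphisms $s_0,s_1,s_2,s_3\colon VP_4\to VP_5$ to the single commutativity relation $[\lambda_{24},\lambda_{31}]=1$ and then unwind the resulting four identities one at a time via a cascade, invoking only relations that are already available at each stage.

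The first step is to compute the four lifted relations explicitly from Proposition~\ref{p3.1}. With $\lambda_{24}$ fitting the $\lambda_{k,l}$ table at $(k,l)=(2,4)$ and $\lambda_{31}$ fitting the $\lambda_{l,k}$ table at $(l,k)=(3,1)$, a direct reading off the appropriate cases yields
\begin{align*}
s_0\colon&\quad [\lambda_{35},\,\lambda_{42}\lambda_{41}]=1,\\
s_1\colon&\quad [\lambda_{25}\lambda_{35},\,\lambda_{41}]=1,\\
s_2\colon&\quad [\lambda_{25},\,\lambda_{31}\lambda_{41}]=1,\\
s_3\colon&\quad [\lambda_{25}\lambda_{24},\,\lambda_{31}]=1.
\end{align*}

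Then I would cascade in reverse order. From the $s_3$-image combined with $[\lambda_{24},\lambda_{31}]=1$ (which holds already in $VP_4\subset VP_5$), the elementary commutator identity ``$[ab,c]=1$ together with $[b,c]=1$ implies $[a,c]=1$'' yields $[\lambda_{25},\lambda_{31}]=1$. Feeding this new relation into the $s_2$-image and using the mirror identity gives $[\lambda_{25},\lambda_{41}]=1$; feeding that into the $s_1$-image gives $[\lambda_{35},\lambda_{41}]=1$; and a last step applied to the $s_0$-image produces $[\lambda_{35},\lambda_{42}]=1$. There is no real obstacle here, as the entire argument is essentially a bookkeeping exercise. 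The only point to watch is which side of the commutator carries the two-term factor in each $s_i$-image, since this dictates whether one uses the left- or right-sided version of the elementary commutator identity when cancelling the already-known-to-commute factor.
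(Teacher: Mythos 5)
Your proposal is correct and follows exactly the same route as the paper: the four $s_i$-images you compute from Proposition~\ref{p3.1} agree with those in the paper's proof, and the paper likewise cascades in reverse order, extracting $[\lambda_{25},\lambda_{31}]=1$ from the $s_3$-image together with $[\lambda_{24},\lambda_{31}]=1$, then feeding each new commutator into the $s_2$-, $s_1$-, and $s_0$-images in turn. No gaps.
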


\medskip

Acting on the first relation  by $s_i$, $i = 0, 1, 2, 3$, we get
$$
[\lambda_{45}, \lambda_{32} \lambda_{31}] = [\lambda_{45}, \lambda_{21} \lambda_{31}] = [\lambda_{35} \lambda_{45}, \lambda_{21}] = [\lambda_{35} \lambda_{34}, \lambda_{21}] = 1.
$$
Using the commutativity relations in $VP_4$ from the last relation follows that $[\lambda_{35}, \lambda_{21}] = 1$. Then from the third relation we have $[\lambda_{45}, \lambda_{21}] = 1$, from the second relation: $[\lambda_{45}, \lambda_{31}] = 1$ and from the first relation: $[\lambda_{45}, \lambda_{32}] = 1$.

\begin{lem}
Lifting $s_i$, $i = 0, 1, 2, 3$, of the commutativity relation $[\lambda_{34}, \lambda_{21}] =  1$ and the commutativity relations of $VP_4$ give relations
$$
[\lambda_{35}, \lambda_{21}] = [\lambda_{45}, \lambda_{21}] = [\lambda_{45}, \lambda_{31}] = [\lambda_{45}, \lambda_{32}] = 1.
$$
\end{lem}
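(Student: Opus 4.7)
The plan is to follow the same template used in the three preceding lemmas in this subsection. First, I would apply the degeneracy maps $s_0,s_1,s_2,s_3$ to the relation $[\lambda_{34},\lambda_{21}]=1$ using the formulas of Proposition~\ref{p3.1}. For $\lambda_{34}$ (with $k=3$, $l=4$) this gives $s_0(\lambda_{34})=s_1(\lambda_{34})=\lambda_{45}$, $s_2(\lambda_{34})=\lambda_{35}\lambda_{45}$, and $s_3(\lambda_{34})=\lambda_{35}\lambda_{34}$, while for $\lambda_{21}$ (with $l=2$, $k=1$) one obtains $s_0(\lambda_{21})=\lambda_{32}\lambda_{31}$, $s_1(\lambda_{21})=\lambda_{21}\lambda_{31}$, and $s_2(\lambda_{21})=s_3(\lambda_{21})=\lambda_{21}$. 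This reproduces exactly the four relations
$$[\lambda_{45},\lambda_{32}\lambda_{31}]=[\lambda_{45},\lambda_{21}\lambda_{31}]=[\lambda_{35}\lambda_{45},\lambda_{21}]=[\lambda_{35}\lambda_{34},\lambda_{21}]=1$$
displayed immediately before the statement.

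Next, I would extract the four desired single commutators by a cascade, starting from the equation coming from $s_3$ and working backwards to $s_0$. From $[\lambda_{35}\lambda_{34},\lambda_{21}]=1$ together with the already available relation $[\lambda_{34},\lambda_{21}]=1$ in $\mathcal{R}^V(4)$, sliding $\lambda_{34}$ past $\lambda_{21}$ and cancelling yields $[\lambda_{35},\lambda_{21}]=1$. Combining this with $[\lambda_{35}\lambda_{45},\lambda_{21}]=1$ then gives $[\lambda_{45},\lambda_{21}]=1$. Feeding this into $[\lambda_{45},\lambda_{21}\lambda_{31}]=1$ produces $[\lambda_{45},\lambda_{31}]=1$, and finally substituting into $[\lambda_{45},\lambda_{32}\lambda_{31}]=1$ gives $[\lambda_{45},\lambda_{32}]=1$, which is the last of the four commutators in the statement.

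No real obstacle is expected: the argument is a routine chain of cancellations of the exact same shape as in the three preceding lemmas, and at every stage the auxiliary commutator being eliminated either lies in $\mathcal{R}^V(4)$ or has been established in one of the earlier steps of the cascade. The only point requiring care is the order in which the four equations are consumed, which must be $s_3 \to s_2 \to s_1 \to s_0$ so that each new single commutator is available when the next equation is processed.
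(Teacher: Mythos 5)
Your proposal is correct and follows exactly the paper's own argument: the four lifted relations you compute from Proposition~\ref{p3.1} agree with those displayed in the paper, and the cascade $s_3 \to s_2 \to s_1 \to s_0$ (starting from $[\lambda_{35}\lambda_{34},\lambda_{21}]=1$ combined with $[\lambda_{34},\lambda_{21}]=1$ in $\mathcal{R}^V(4)$, then peeling off one factor at a time) is precisely the order of deductions used in the text. No gaps.
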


In $VP_5$ we have 24 commutativity  relations of the form $[\lambda_{i5}, \lambda_{kl}^*] = 1$, $\lambda_{kl}^* \in \{ \lambda_{kl}, \lambda_{lk}\}$, where $1 \leq i < 5$, $1 \leq k < l \leq 4$:
$$
[\lambda_{45}, \lambda_{12}^*] = [\lambda_{45}, \lambda_{13}^*] = [\lambda_{45}, \lambda_{23}^*] =
[\lambda_{35}, \lambda_{12}^*] = [\lambda_{35}, \lambda_{14}^*] = [\lambda_{35}, \lambda_{24}^*] = 1,
$$
$$
[\lambda_{25}, \lambda_{13}^*] = [\lambda_{25}, \lambda_{14}^*] = [\lambda_{25}, \lambda_{34}^*] =
[\lambda_{15}, \lambda_{23}^*] = [\lambda_{15}, \lambda_{24}^*] = [\lambda_{15}, \lambda_{34}^*] = 1.
$$
These relations follow from the 1-st and from the 2-nd groups of commutativity relations in $\mathcal{R}^V(4)$. The other commutativity relations from $\mathcal{R}^V(5)\setminus \mathcal{R}^V(4)$ follow by the same way from the 3-d and from the 4-th groups of relations.

\subsection{Lifting the commutativity relations from $\mathcal{R}^V(n)$ to $\mathcal{R}^V(n+1)$, $n \geq 5$}

We have to show that $\mathcal{R}^V(n+1) = \langle \mathcal{R}^V(n), s_i(\mathcal{R}^V(n)), i = 0, 1, \ldots, n-1 \rangle$. At first consider the commutativity relations
$$
[\lambda_{mn}^*, \lambda_{kl}^*], ~~1 \leq m <  n,~~ 1 \leq k < l < n,
$$
in $VP_n$, which are not commutativity relations in $VP_{n-1}$. We divide them on the four groups:

1-st group: $[\lambda_{mn}, \lambda_{kl}] = 1$;

2-nd group: $[\lambda_{mn}, \lambda_{lk}] = 1$;

3-d group: $[\lambda_{nm}, \lambda_{lk}] = 1$;

4-th group: $[\lambda_{nm}, \lambda_{kl}] = 1$.

Consider the relations from the 1-st group and divide them on some subgroups.

1) Suppose that $m < k < l < n.$

Acting on the relation $[\lambda_{mn}, \lambda_{kl}] = 1$ by $s_{n-1}$ and using Proposition \ref{p3.1}, we get the relation
$$
[\lambda_{m,n+1} \lambda_{mn}, \lambda_{kl}] = 1.
$$
Since $[\lambda_{mn}, \lambda_{kl}] = 1$ and  this relation is a relation in $VP_n$, we have relation in $VP_{n+1}$:
\begin{equation} \label{r10}
[\lambda_{m,n+1}, \lambda_{kl}] = 1.
\end{equation}

Let $i$ be such that $m < k < l < i < n.$ Acting by $s_{i-1}$ on the relation $[\lambda_{mn}, \lambda_{kl}] = 1$ we get
$$
s_{i-1} ([\lambda_{mn}, \lambda_{kl}]) = [\lambda_{m,n+1}, \lambda_{kl}] = 1
$$
that is a relation in $VP_{n+1}$.

Let $i = l$, then
$$
s_{l-1} ([\lambda_{mn}, \lambda_{kl}]) = [\lambda_{m,n+1}, \lambda_{k,l+1} \lambda_{kl}] = 1.
$$
Using the commutativity relations in $VP_n$ and relation (\ref{r10}), we have
\begin{equation} \label{r11}
[\lambda_{m,n+1}, \lambda_{k,l+1}] = 1,
\end{equation}
i.e. a commutativity relation in $VP_{n+1}$.

Let $i$ satisfies the inequality $m < k < i < l < n.$ Acting by $s_{i-1}$  we get
$$
s_{i-1} ([\lambda_{mn}, \lambda_{kl}]) = [\lambda_{m,n+1}, \lambda_{k,l+1}] = 1
$$
that is a relation in $VP_{n+1}$.

Let $i=k$. Acting by $s_{k-1}$  we get
$$
s_{k-1} ([\lambda_{mn}, \lambda_{kl}]) = [\lambda_{m,n+1}, \lambda_{k,l+1} \lambda_{k+1,l+1}] = 1.
$$
Using the  relation (\ref{r11}), we have
\begin{equation} \label{r12}
[\lambda_{m,n+1}, \lambda_{k+1,l+1}] = 1,
\end{equation}
i.e. a commutativity relation in $VP_{n+1}$.

Let $i$ satisfies the inequality $m < i < k <  l < n.$ Acting by $s_{i-1}$  we get
$$
s_{i-1} ([\lambda_{mn}, \lambda_{kl}]) = [\lambda_{m,n+1}, \lambda_{k+1,l+1}] = 1
$$
that is a relation in $VP_{n+1}$.

Let $i=m$. Acting by $s_{m-1}$  we get
$$
s_{m-1} ([\lambda_{mn}, \lambda_{kl}]) = [\lambda_{m,n+1} \lambda_{m+1,n+1},  \lambda_{k+1,l+1}] = 1.
$$
Using the  relation (\ref{r12}), we have
$$
[\lambda_{m+1,n+1}, \lambda_{k+1,l+1}] = 1,
$$
i.e. a commutativity relation in $VP_{n+1}$.

Let $i$ be satisfied the inequality $i < m <  k <  l < n.$ Acting by $s_{i-1}$  we get
$$
s_{i-1} ([\lambda_{mn}, \lambda_{kl}]) = [\lambda_{m+1,n+1}, \lambda_{k+1,l+1}] = 1
$$
that is a relation in $VP_{n+1}$.

\medskip

2) Suppose that $k < m < l < n.$

Acting on the relation $[\lambda_{mn}, \lambda_{kl}] = 1$ by $s_{n-1}$, we get the relation
$$
[\lambda_{m,n+1}\lambda_{mn}, \lambda_{kl}] = 1.
$$
Since $[\lambda_{mn}, \lambda_{kl}] = 1$ that follows from the  relations in $VP_n$, we have relation:
$$
[\lambda_{m,n+1}, \lambda_{kl}] = 1.
$$

Let $i$ be such that $k < m < l < i < n.$ Acting by $s_{i-1}$ on the relation $[\lambda_{mn}, \lambda_{kl}] = 1$ we get
\begin{equation} \label{r15}
s_{i-1} ([\lambda_{mn}, \lambda_{kl}]) = [\lambda_{m,n+1}, \lambda_{kl}] = 1
\end{equation}
that is a relation in $VP_{n+1}$.

Let $i = l$, then
$$
s_{l-1} ([\lambda_{mn}, \lambda_{kl}]) = [\lambda_{m,n+1}, \lambda_{k,l+1} \lambda_{kl}] = 1
$$
Using the commutativity relations in $VP_n$ and relation (\ref{r15}), we have
$$
[\lambda_{m,n+1}, \lambda_{k,l+1}] = 1,
$$
i.e. a commutativity relation in $VP_{n+1}$.

Let $i$ be satisfied the inequality $k < m < i < l < n.$ Acting by $s_{i-1}$  we get
\begin{equation} \label{r16}
s_i ([\lambda_{mn}, \lambda_{kl}]) = [\lambda_{m,n+1}, \lambda_{k,l+1}] = 1
\end{equation}
that is a relation in $VP_{n+1}$.

Let $i=m$. Acting by $s_{m-1}$  we get
\begin{equation} \label{r17}
s_m ([\lambda_{mn}, \lambda_{kl}]) = [\lambda_{m,n+1} \lambda_{m+1,n+1},  \lambda_{k,l+1}] = 1.
\end{equation}
Using the  relation (\ref{r16}), we have
$$
[\lambda_{m+1,n+1}, \lambda_{k,l+1}] = 1,
$$
i.e. a commutativity relation in $VP_{n+1}$.

Let $i$ be satisfied the inequality $k < i < m <  l < n.$ Acting by $s_{i-1}$  we get
\begin{equation} \label{r18}
s_{i-1} ([\lambda_{mn}, \lambda_{kl}]) = [\lambda_{m+1,n+1}, \lambda_{k,l+1}] = 1
\end{equation}
that is a relation in $VP_{n+1}$.

Let $i=k$. Acting by $s_{k-1}$  we get
\begin{equation} \label{r19}
s_{k-1} ([\lambda_{mn}, \lambda_{kl}]) = [\lambda_{m+1,n+1},  \lambda_{k,l+1} \lambda_{k+1,l+1}] = 1.
\end{equation}
Using the  relation (\ref{r18}), we have
$$
[\lambda_{m+1,n+1}, \lambda_{k+1,l+1}] = 1,
$$
i.e. a commutativity relation in $VP_{n+1}$.

Let $i$ be satisfied the inequality $i < k <  m <  l < n.$ Acting by $s_{i-1}$  we get
\begin{equation} \label{r20}
s_{i-1} ([\lambda_{mn}, \lambda_{kl}]) = [\lambda_{m+1,n+1}, \lambda_{k+1,l+1}] = 1
\end{equation}
that is a relation in $VP_{n+1}$.

\medskip

3) Suppose that $k < l < m < n.$

Acting on $[\lambda_{mn}, \lambda_{kl}] = 1,$ by $s_{n-1}$, we get the relation
$$
s_{n-1}([\lambda_{mn}, \lambda_{kl}]) = [\lambda_{m,n+1}\lambda_{mn}, \lambda_{kl}] = 1.
$$
Using commutativity  relations in $VP_n$ and the commutativity relations in $VP_{n+1}$, which were proved in 2), from our relation follows
\begin{equation} \label{r21}
[\lambda_{m,n+1}, \lambda_{kl}] = 1.
\end{equation}

Let $i$ be such that $k < l < m < i < n.$ Acting by $s_{i-1}$ on the relation $[\lambda_{mn}, \lambda_{kl}] = 1$ we get
$$
s_{i-1} ([\lambda_{mn}, \lambda_{kl}]) = [\lambda_{m,n+1}, \lambda_{kl}] = 1
$$
that is a relation in $VP_{n+1}$.

Let $i = m$, then
$$
s_{m-1} ([\lambda_{mn}, \lambda_{kl}]) = [\lambda_{m,n+1} \lambda_{m+1,n+1}, \lambda_{kl}] = 1.
$$
Using  relation (\ref{r21}), we have
\begin{equation} \label{r22}
[\lambda_{m+1,n+1}, \lambda_{k,l}] = 1,
\end{equation}
i.e. a commutativity relation in $VP_{n+1}$.

Let $i$ satisfies the inequality $k < l < i < m < n.$ Acting by $s_{i-1}$  we get
$$
s_i ([\lambda_{mn}, \lambda_{kl}]) = [\lambda_{m+1,l+1}, \lambda_{kl}] = 1
$$
that is a relation in $VP_{n+1}$.

Let $i=l$. Acting by $s_{l-1}$  we get
$$
s_{l-1} ([\lambda_{mn}, \lambda_{kl}]) = [\lambda_{m+1,n+1},  \lambda_{k,l+1} \lambda_{kl}] = 1.
$$
Using (\ref{r22}) we get
\begin{equation} \label{r23}
[\lambda_{m+1,n+1}, \lambda_{k,l+1}] = 1.
\end{equation}

Let $i$ satisfies the inequality $k < i < l <  m < n.$ Acting by $s_{i-1}$  we get
\begin{equation} \label{r24}
s_{i-1} ([\lambda_{mn}, \lambda_{kl}]) = [\lambda_{m+1,n+1}, \lambda_{k,l+1}] = 1
\end{equation}
that is a relation in $VP_{n+1}$.

Let $i=k$. Acting by $s_{k-1}$  we get
$$
s_k ([\lambda_{mn}, \lambda_{kl}]) = [\lambda_{m+1,n+1},  \lambda_{k,l+1} \lambda_{k+1,l+1}] = 1.
$$
Using the  relation (\ref{r23}), we have
$$
[\lambda_{m+1,n+1}, \lambda_{k+1,l+1}] = 1.
$$

Let $i$ be satisfied the inequality $i < k < l <  m < n.$ Acting by $s_{i-1}$  we get
$$
s_i ([\lambda_{mn}, \lambda_{kl}]) = [\lambda_{m+1,n+1}, \lambda_{k+1,l+1}] = 1.
$$

\medskip

We considered only the 1-st group of relations. The proof for the other groups is similar.

\subsection{Lifting the long relations from $\mathcal{R}^V(n)$ to $\mathcal{R}^V(n+1)$, $n \geq 4$}

Denote by $R_{ijk}$ the following set of long relations
$$
\lambda_{ij} \lambda_{ik} \lambda_{jk} = \lambda_{jk} \lambda_{ik} \lambda_{ij},~~~
\lambda_{ji} \lambda_{jk} \lambda_{ik} = \lambda_{ik} \lambda_{jk} \lambda_{ji},
$$
$$
\lambda_{ik} \lambda_{ij} \lambda_{kj} = \lambda_{kj} \lambda_{ij} \lambda_{ik},~~~
\lambda_{ki} \lambda_{kj} \lambda_{ij} = \lambda_{ij} \lambda_{kj} \lambda_{ki},
$$
$$
\lambda_{jk} \lambda_{ji} \lambda_{ki} = \lambda_{ki} \lambda_{ji} \lambda_{jk},~~~
\lambda_{kj} \lambda_{ki} \lambda_{ji} = \lambda_{ji} \lambda_{ki} \lambda_{kj},
$$
i.e. relations which contains the generators with indexes from the set $\{ i, j, k \}$.

We have to prove that relations $R_{i,j,n+1}$ follows from relations of $\mathcal{R}^V(n)$, $s_l(\mathcal{R}^V(n))$, $l = 0, 1, \ldots, n-1$ and commutativity relations of $\mathcal{R}^V(n+1)$.

\begin{thm}
The long relations $R_{i,j,n+1}$ in $\mathcal{R}^V(n+1)$ follow from the relations of $\mathcal{R}^V(n)$, $s_l(\mathcal{R}^V(n))$, $l = 0, 1, \ldots, n-1$ and commutativity relations of $\mathcal{R}^V(n+1)$.
\end{thm}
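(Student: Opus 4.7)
The plan directly generalizes the technique used in Lemmas~4.2--4.4 for the base case $n=3\to n=4$. Fix a pair $1 \le i < j \le n$; to derive the six relations in $R_{i,j,n+1}$ we select a source relation $R_{a,b,c} \in \mathcal{R}^V(n)$ together with a degeneracy $s_t\colon VP_n \to VP_{n+1}$, apply $s_t$ to $R_{a,b,c}$, and then simplify the resulting word in $VP_{n+1}$ using the commutativity relations in $\mathcal{R}^V(n+1)$ already derived in the previous subsection and, if needed, the source relation itself. Proposition~\ref{p3.1} provides the key dichotomy: $s_t$ sends each generator $\lambda_{k\ell}$ appearing in $R_{a,b,c}$ either to a single generator (when $t+1 \notin \{k,\ell\}$) or to a product of two generators (when $t+1 \in \{k,\ell\}$).

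\emph{Direct regime.} If $(a,b,c,t)$ is chosen so that $t+1 \notin \{a,b,c\}$ and the relabeling induced by $s_t$ sends $\{a,b,c\}$ onto $\{i,j,n+1\}$, then $s_t(R_{a,b,c})$ is literally $R_{i,j,n+1}$. This is available for most pairs: when $j \le n-2$ take $(a,b,c,t) = (i,j,n,j)$, so that $t+1 = j+1 \notin \{i,j,n\}$ and the image shifts only the index $n$ to $n+1$; when $j = n$ and $i \le n-3$ take $(a,b,c,t) = (i,n-1,n,i)$, so that $t+1 = i+1 \notin \{i,n-1,n\}$ and the image sends $n-1 \mapsto n$ and $n \mapsto n+1$ while fixing $i$. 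In each case the output is precisely $R_{i,j,n+1}$.

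\emph{Reduction regime.} The remaining boundary pairs, essentially $j = n-1$ with $i$ close to $n$ and $j = n$ with $i \in \{n-2,n-1\}$, force $t+1$ into $\{a,b,c\}$, so that $s_t(R_{a,b,c})$ becomes a relation on four distinct indices of $VP_{n+1}$. For these we proceed exactly as in the base-case lemmas: commute generators with disjoint index pairs past one another (invoking only commutativity relations of $\mathcal{R}^V(n+1)$ already established in the previous subsection), cancel the ``lower'' matching block using the source relation $R_{a,b,c}$ itself, and read off $R_{i,j,n+1}$. Long relations established earlier in this subsection via the direct regime (or via smaller boundary cases) may be used as auxiliary inputs, so the derivations are consistent once arranged in the correct order.

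\textbf{Main obstacle.} The substantive difficulty lies entirely in the bookkeeping of the reduction regime. One must verify, for each boundary pair, that every commutativity relation invoked in the simplification has been previously established, and one must order the boundary derivations so as to avoid circular dependencies. The number of boundary pairs per step is bounded by a small constant independent of $n$, and each individual reduction parallels a computation already carried out explicitly in Lemmas~\ref{l1}--\ref{l2}; thus the argument introduces no conceptually new ingredient beyond what the base step demonstrates, but for full rigor each such case must be spelled out in the style of those lemmas.
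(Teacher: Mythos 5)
Your overall strategy coincides with the paper's: Lemma~\ref{lem1} of the paper is exactly your ``direct regime'' (choose a source triple and a degeneracy $s_t$ with $t+1$ avoiding all three indices, so that $s_t$ relabels the source relation onto $R_{i,j,n+1}$), and the explicit computations for the leftover triples are your ``reduction regime.'' However, there is a concrete hole in your enumeration of the direct regime. You list only two families, $s_j(R_{i,j,n})$ for $j\le n-2$ and $s_i(R_{i,n-1,n})$ for $j=n$, $i\le n-3$; these miss the entire column $j=n-1$. The paper's Lemma~\ref{lem1} has a third family you omit, namely $s_0(R_{i-1,j-1,n})=R_{i,j,n+1}$ whenever $i\ge 3$ (here $t+1=1$ avoids $\{i-1,j-1,n\}$ and every index is shifted up by one), and it states the second family in full generality, $s_i(R_{i,j-1,n})=R_{i,j,n+1}$ whenever $j-i\ge 3$, not only for $j=n$. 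Without these, your boundary set contains all $n-2$ pairs $(i,n-1)$, which contradicts your own claim that the number of boundary pairs is bounded by a constant independent of $n$, and your description of the leftovers as ``$j=n-1$ with $i$ close to $n$'' is not what your stated families actually leave over. With the full direct regime, the genuine boundary triples are precisely those with $i\le 2$, $j-i\le 2$ and $n+1-j\le 2$ simultaneously, which forces $n\le 5$: for $i=2$ the paper reduces $s_0(R_{1,j-1,n})$ using commutativity together with the already-established long relations $R_{1,j,n+1}$, and for $i=1$ the only surviving case is $R_{1,3,5}$, reduced from $s_1(R_{1,2,4})$.

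Beyond that enumeration issue, your reduction regime is only asserted, not carried out. The paper does spell out these derivations (six relations each for the $i=2$ case and for $R_{1,3,5}$), and the cancellations there rely not on the source relation alone but on long relations of $VP_{n+1}$ already obtained from the direct regime -- you do hedge on this point, so the dependency ordering you describe is consistent with what the paper does. To turn your plan into a proof you need to (a) add the missing $s_0$-family (and the general form of the $s_i$-family) to the direct regime so that the boundary set really is finite and independent of $n$, and (b) perform the explicit reductions for the cases $i=2$ and $R_{1,3,5}$ in the style of Lemmas~\ref{l1}--\ref{l2}.
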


To prove this theorem we start with the following

\begin{lem} \label{lem1}
Let $n \geq 4$ and for the set of integer numbers $\{ i, j, n+1 \}$, $1 \leq i < j \leq n+1$ one of the following conditions holds

1) $i \geq 3$;

2) $j - i \geq 3$;

3) $n+1 - j \geq 3$.

Then there is an integer $k$, $1 \leq k \leq n$, such that the relations $R_{i,j,n+1} \subseteq \mathcal{R}^V(n+1)$ follows from the relations $s_{k-1}(\mathcal{R}^V(n))$.
\end{lem}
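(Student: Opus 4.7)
The plan is to derive the lemma directly from Proposition \ref{p3.1}. The key observation is that when $k \notin \{p, q\}$, the map $s_{k-1}$ sends the single generator $\lambda_{p,q}$ to a single generator (not a product) via the relabeling $p \mapsto p$ if $p < k$ and $p \mapsto p+1$ if $p > k$ (and analogously for $q$); the same rule applies to $\lambda_{q,p}$ by the symmetry in Proposition \ref{p3.1}. Consequently, whenever $k$ avoids all three indices of a triple $\{a, b, c\} \subset \{1, \ldots, n\}$, the homomorphism $s_{k-1}$ carries the six long relations of $R_{a,b,c}$ onto the six long relations of $R_{a', b', c'}$ verbatim, where $\{a', b', c'\}$ is the image triple under the relabeling.

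The proof then reduces to producing, in each of the three cases, a triple $a < b < c$ in $\{1, \ldots, n\}$ and an integer $k \in \{1, \ldots, n\} \setminus \{a, b, c\}$ whose associated relabeling carries $\{a, b, c\}$ onto $\{i, j, n+1\}$. For condition (1), $i \geq 3$, take $(a, b, c) = (i-1, j-1, n)$ and $k = i-2$, so that $k < a$ and every index shifts up by one. For condition (2), $j - i \geq 3$, take $(a, b, c) = (i, j-1, n)$ and $k = i+1$, so that $a < k < b$, leaving $a$ fixed while $b$ and $c$ shift up. For condition (3), $n+1-j \geq 3$, take $(a, b, c) = (i, j, n)$ and $k = j+1$, so that $b < k < c$, leaving $a$ and $b$ fixed while $c$ shifts up.

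The remaining work is routine: one checks in each case that $a, b, c$ are strictly increasing members of $\{1, \ldots, n\}$, that $k$ is a valid degeneracy index in $\{1, \ldots, n\}$, and that $k \notin \{a, b, c\}$. The conditions $i \geq 3$, $j - i \geq 3$, $n+1 - j \geq 3$ are precisely what ensures these small-arithmetic constraints are met (the third case requires $j \leq n-2$, the second $j \geq i+3$, and the first $i \geq 3$). There is no serious obstacle here, only the need to organize the three cases carefully so that the image of the chosen $\{a, b, c\}$ under the shift-at-$k$ relabeling is exactly $\{i, j, n+1\}$; once the triple and $k$ are correctly matched, the six desired relations of $R_{i,j,n+1}$ are obtained as $s_{k-1}(R_{a,b,c})$ with no further manipulation.
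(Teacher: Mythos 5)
Your proposal is correct and follows essentially the same route as the paper: in each case one picks a triple in $\{1,\dots,n\}$ and a degeneracy index avoiding it, so that Proposition \ref{p3.1} relabels all six generators of $R_{a,b,c}$ individually and carries the set verbatim onto $R_{i,j,n+1}$; your choices in cases (2) and (3) coincide with the paper's ($s_i$ applied to $R_{i,j-1,n}$ and $s_j$ applied to $R_{i,j,n}$). The only cosmetic difference is in case (1), where the paper takes $k=1$ (i.e.\ $s_0$) while you take $k=i-2$; both are valid since any $k<i-1$ induces the same uniform shift of the triple $(i-1,j-1,n)$ to $(i,j,n+1)$.
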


\begin{proof}
1) Suppose that the condition 1) holds. Put $k=1$ and consider the relations $R_{i-1,j-1,n}$ in $\mathcal{R}^V(n)$. It is not difficult to see that  $s_0(R_{i-1,j-1,n}) = R_{i,j,n+1}$.

2) Suppose that the condition 2) holds. Put $k=i+1$ and consider the relations $R_{i,j-1,n}$ in $\mathcal{R}^V(n)$. It is not difficult to see that  $s_{i}(R_{i,j-1,n}) = R_{i,j,n+1}$.

3) Suppose that the condition 3) holds. Put $k=j+1$ and consider the relations $R_{i,j,n}$ in $\mathcal{R}^V(n)$. It is not difficult to see that  $s_{j}(R_{i,j,n}) = R_{i,j,n+1}$.
\end{proof}

Now suppose that $i = 2$ and for the set $\{ i, j, n+1 \}$ none of the conditions of the lemma is satisfied. Take the set of relations $R_{1,j-1,n}$ and find $s_0(R_{1,j-1,n})$. The first relation in $R_{1,j-1,n}$ has the form
$$
\lambda_{1,j-1} \lambda_{1n} \lambda_{j-1,n} = \lambda_{j-1,n} \lambda_{1n} \lambda_{1,j-1}.
$$
Acting by $s_0$ we get the relation
$$
(\lambda_{1,j} \lambda_{2,j}) (\lambda_{1,n+1} \lambda_{2,n+1}) \lambda_{j,n+1} = \lambda_{j,n+1} (\lambda_{1,n+1} \lambda_{2,n+1}) (\lambda_{1,j} \lambda_{2,j}).
$$
Since $\lambda_{2,j} \lambda_{1,n+1} = \lambda_{1,n+1} \lambda_{2,j}$ and $\lambda_{2,n+1} \lambda_{1,j} = \lambda_{1,j} \lambda_{2,n+1}$, rewrite the last relation in the form
\begin{equation} \label{r30}
\lambda_{1,j} \lambda_{1,n+1} \lambda_{2,j} \lambda_{2,n+1} \lambda_{j,n+1} = (\lambda_{j,n+1} \lambda_{1,n+1} \lambda_{1,j}) \lambda_{2,n+1} \lambda_{2,j}.
\end{equation}
Take the set $\{ 1, j, n+1 \}$. Since $n \geq 4$, then for this set condition 2) or condition 3) of Lemma \ref{lem1} holds. Then the set of relation $R_{1,j,n+1}$ comes from relations of $VP_n$. In particular, the relation
$$
\lambda_{j,n+1} \lambda_{1,n+1} \lambda_{1,j} =   \lambda_{1,j} \lambda_{1,n+1} \lambda_{j,n+1}
$$
holds. Using this relation, rewrite (\ref{r30}):
$$
\lambda_{1,j} \lambda_{1,n+1} \lambda_{2,j} \lambda_{2,n+1} \lambda_{j,n+1} = (\lambda_{1,j} \lambda_{1,n+1} \lambda_{j,n+1}) \lambda_{2,n+1} \lambda_{2,j}.
$$
After cancelations we have
$$
 \lambda_{2,j} \lambda_{2,n+1} \lambda_{j,n+1} =  \lambda_{j,n+1} \lambda_{2,n+1} \lambda_{2,j}.
$$
It is the first relation from $R_{2,j,n+1}$.

The second relation in $R_{1,j-1,n}$ has the form
$$
\lambda_{j-1,1} \lambda_{j-1,n} \lambda_{1,n} = \lambda_{1,n} \lambda_{j-1,n} \lambda_{j-1,1}.
$$
Acting by $s_0$ we get the relation
$$
(\lambda_{j2} \lambda_{j1}) \lambda_{j,n+1} (\lambda_{1,n+1} \lambda_{2,n+1}) = (\lambda_{1,n+1} \lambda_{2,n+1}) \lambda_{j,n+1} (\lambda_{j2} \lambda_{j1}).
$$
As we seen before the set of relation $R_{1,j,n+1}$ holds in  $VP_{n+1}$. Using the relation
$$
\lambda_{j1} \lambda_{j,n+1} \lambda_{1,n+1} =   \lambda_{1,n+1} \lambda_{j,n+1} \lambda_{j1}
$$
rewrite our relation in the form:
$$
\lambda_{j2} (\lambda_{1,n+1} \lambda_{j,n+1} \lambda_{j1}) \lambda_{2,n+1} = \lambda_{1,n+1} \lambda_{2,n+1} \lambda_{j,n+1}\lambda_{j2} \lambda_{j1}.
$$
Using the commutativity relations $\lambda_{j2} \lambda_{1,n+1} = \lambda_{1,n+1} \lambda_{j2}$ and $\lambda_{j1} \lambda_{2,n+1} = \lambda_{2,n+1} \lambda_{j1}$ we have
$$
(\lambda_{1,n+1} \lambda_{j2}) \lambda_{j,n+1} (\lambda_{2,n+1} \lambda_{j1}) = \lambda_{1,n+1} \lambda_{2,n+1} \lambda_{j,n+1}\lambda_{j2} \lambda_{j1}.
$$
After cancelations we get
$$
 \lambda_{j2} \lambda_{j,n+1} \lambda_{2,n+1} =  \lambda_{2,n+1} \lambda_{j,n+1} \lambda_{j2}.
$$
It is the second relation from $R_{1,j,n+1}$.

The third  relation in $R_{1,j-1,n}$ has the form
$$
\lambda_{1n} \lambda_{1,j-1} \lambda_{n,j-1} = \lambda_{n,j-1} \lambda_{1,j-1} \lambda_{1n}.
$$
Acting by $s_0$ we get the relation
$$
(\lambda_{1,n+1} \lambda_{2,n+1}) (\lambda_{1j} \lambda_{2j}) \lambda_{n+1,j} = \lambda_{n+1,j} (\lambda_{1j} \lambda_{2j}) (\lambda_{1,n+1} \lambda_{2,n+1}).
$$
Since $\lambda_{2,n+1} \lambda_{1j} = \lambda_{1j} \lambda_{2,n+1}$ and $\lambda_{2j} \lambda_{1,n+1} = \lambda_{1,n+1} \lambda_{2j}$, rewrite the last relation in the form
\begin{equation} \label{r31}
\lambda_{1,n+1} \lambda_{1j} \lambda_{2,n+1}  \lambda_{2j} \lambda_{n+1,j} = (\lambda_{n+1,j} \lambda_{1j} \lambda_{1,n+1}) \lambda_{2j} \lambda_{2,n+1}.
\end{equation}
As we seen, the set of relation $R_{1,j,n+1}$ comes from relations of $VP_n$. In particular, the relation
$$
\lambda_{n+1,j} \lambda_{1j} \lambda_{1,n+1} =   \lambda_{1,n+1} \lambda_{1j} \lambda_{n+1,j}
$$
holds. Using this relation, rewrite (\ref{r31}):
$$
\lambda_{1,n+1} \lambda_{1j} \lambda_{2,n+1}  \lambda_{2j} \lambda_{n+1,j} = (\lambda_{1,n+1} \lambda_{1j} \lambda_{n+1,j}) \lambda_{2j} \lambda_{2,n+1}.
$$
After cancelations we have
$$
 \lambda_{2,n+1}  \lambda_{2j} \lambda_{n+1,j} = \lambda_{n+1,j} \lambda_{2j} \lambda_{2,n+1}.
$$
It is the third relation from $R_{1,j,n+1}$.

The forth  relation in $R_{1,j-1,n}$ has the form
$$
\lambda_{n1} \lambda_{n,j-1} \lambda_{1,j-1} = \lambda_{1,j-1} \lambda_{n,j-1} \lambda_{n1}.
$$
Acting by $s_0$ we get the relation
$$
(\lambda_{n+1,2} \lambda_{n+1,1}) \lambda_{n+1,j} (\lambda_{1j} \lambda_{2j}) = (\lambda_{1j} \lambda_{2j}) \lambda_{n+1,j} (\lambda_{n+1,2} \lambda_{n+1,1}).
$$
As we seen before the set of relation $R_{1,j,n+1}$ holds in  $VP_{n+1}$. Using the relation
$$
\lambda_{n+1,1} \lambda_{n+1,j} \lambda_{1j} = \lambda_{1j} \lambda_{n+1,j} \lambda_{n+1,1}
$$
rewrite our relation in the form:
$$
\lambda_{n+1,2} (\lambda_{1j} \lambda_{n+1,j} \lambda_{n+1,1}) \lambda_{2j} = \lambda_{1j} \lambda_{2j} \lambda_{n+1,j} \lambda_{n+1,2} \lambda_{n+1,1}.
$$
Using the commutativity relations $\lambda_{n+1,2} \lambda_{1j} = \lambda_{1j} \lambda_{n+1,2}$ and $\lambda_{n+1,1} \lambda_{2j} = \lambda_{2j} \lambda_{n+1,1}$ we have
$$
(\lambda_{1j} \lambda_{n+1,2}) \lambda_{n+1,j} (\lambda_{2j} \lambda_{n+1,1}) = \lambda_{1j} \lambda_{2j} \lambda_{n+1,j} \lambda_{n+1,2} \lambda_{n+1,1}.
$$
After cancelations we get
$$
 \lambda_{n+1,2} \lambda_{n+1,j} \lambda_{2j} =  \lambda_{2j} \lambda_{n+1,j} \lambda_{n+1,2}.
$$
It is the forth relation from $R_{1,j,n+1}$.

The firth  relation in $R_{1,j-1,n}$ has the form
$$
\lambda_{j-1,n} \lambda_{j-1,1} \lambda_{n1} = \lambda_{n1} \lambda_{j-1,1} \lambda_{j-1,n}.
$$
Acting by $s_0$ we get the relation
$$
\lambda_{j,n+1} (\lambda_{j2} \lambda_{j1}) (\lambda_{n+1,2} \lambda_{n+1,1})  = (\lambda_{n+1,2} \lambda_{n+1,1}) (\lambda_{j2} \lambda_{j1}) \lambda_{j,n+1}.
$$
Since $\lambda_{j1} \lambda_{n+1,2} = \lambda_{n+1,2} \lambda_{j1}$ and $\lambda_{n+1,1} \lambda_{j2} = \lambda_{j2} \lambda_{n+1,1}$, rewrite the last relation in the form
\begin{equation} \label{r32}
\lambda_{j,n+1} \lambda_{j2} (\lambda_{n+1,2} \lambda_{j1}) \lambda_{n+1,1}  = \lambda_{n+1,2} (\lambda_{j2} \lambda_{n+1,1}) \lambda_{j1} \lambda_{j,n+1}.
\end{equation}
As we note before  the set of relation $R_{1,j,n+1}$ comes from relations of $VP_n$ and in particular, the relation
$$
\lambda_{n+1,1} \lambda_{j1} \lambda_{j,n+1} =   \lambda_{j,n+1} \lambda_{j1} \lambda_{n+1,1}
$$
holds. Using this relation, rewrite (\ref{r32}):
$$
\lambda_{j,n+1} \lambda_{j2} (\lambda_{j,n+1} \lambda_{j1} \lambda_{n+1,1}  = \lambda_{n+1,2} \lambda_{j2} \lambda_{j,n+1} \lambda_{j1} \lambda_{n+1,1}.
$$
After cancelations we have
$$
\lambda_{j,n+1} \lambda_{j2} \lambda_{j,n+1} = \lambda_{j,n+1} \lambda_{j2} \lambda_{j,n+1}.
$$
It is the firth  relation from $R_{1,j,n+1}$.

The sixth  relation in $R_{1,j-1,n}$ has the form
$$
\lambda_{n,j-1} \lambda_{n1} \lambda_{j-1,1} = \lambda_{j-1,1} \lambda_{n1} \lambda_{n,j-1}.
$$
Acting by $s_0$ we get the relation
$$
\lambda_{n+1,j} (\lambda_{n+1,2} \lambda_{n+1,1}) (\lambda_{j2} \lambda_{j1}) = (\lambda_{j2} \lambda_{j1}) (\lambda_{n+1,2} \lambda_{n+1,1}) \lambda_{n+1,j}.
$$
Using the commutativity relations $\lambda_{n+1,1} \lambda_{j2} = \lambda_{j2} \lambda_{n+1,1}$ and $\lambda_{j1} \lambda_{n+1,2} = \lambda_{n+1,2} \lambda_{j1}$ we have
$$
\lambda_{n+1,j} \lambda_{n+1,2} \lambda_{j2} \lambda_{n+1,1} \lambda_{j1} = \lambda_{j2} \lambda_{n+1,2} (\lambda_{j1} \lambda_{n+1,1} \lambda_{n+1,j}).
$$
Using the relation
$$
\lambda_{j1} \lambda_{n+1,1} \lambda_{n+1,j} = \lambda_{n+1,j} \lambda_{n+1,1} \lambda_{j1},
$$
 rewrite our relation in the form:
$$
\lambda_{n+1,j} \lambda_{n+1,2} \lambda_{j2} \lambda_{n+1,1} \lambda_{j1} = \lambda_{j2} \lambda_{n+1,2} (\lambda_{n+1,j} \lambda_{n+1,1} \lambda_{j1}).
$$
After cancelations we get
$$
\lambda_{n+1,j} \lambda_{n+1,2} \lambda_{j2} =  \lambda_{j2} \lambda_{n+1,2} \lambda_{n+1,j}.
$$
It is the sixth  relation from $R_{2,j,n+1}$.

Hence, we have proven

\begin{lem}
Let $n \geq 4$. Acting on the relations $R_{1,j-1,n}$ of $VP_n$ by $s_0$ and using the relations, which we got in Lemma \ref{lem1}, we get relations $R_{2,j,n+1}$ in $VP_{n+1}$.
\end{lem}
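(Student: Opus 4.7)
The plan is to take each of the six long relations in $R_{1,j-1,n}$ and apply the degeneracy homomorphism $s_0\colon VP_n\to VP_{n+1}$ using the explicit formulas of Proposition \ref{p3.1}, then simplify the resulting identity in $VP_{n+1}$ using (i) the commutativity relations between generators with disjoint index sets (these are already available in $\mathcal{R}^V(n+1)$ from the preceding subsections) and (ii) the long relations $R_{1,j,n+1}$, which are delivered by Lemma \ref{lem1}: indeed the triple $\{1,j,n+1\}$ satisfies either condition (2) ($j-1\geq 3$) or condition (3) ($n+1-j\geq 3$) as soon as $n\geq 4$ and $3\leq j\leq n$.

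The key observation driving the calculation is the $s_0$-action: since $i=1$, we have $s_0(\lambda_{k,l})=\lambda_{k+1,l+1}$ and $s_0(\lambda_{l,k})=\lambda_{l+1,k+1}$ whenever $k>1$, while $s_0(\lambda_{1,l})=\lambda_{1,l+1}\lambda_{2,l+1}$ and $s_0(\lambda_{l,1})=\lambda_{l+1,2}\lambda_{l+1,1}$. Consequently each relation in $R_{1,j-1,n}$ is mapped to an equation in $VP_{n+1}$ whose two sides are products in which every generator with first (or second) index equal to $1$ has been \emph{doubled}, picking up a companion factor with index $2$ in its place, while generators that previously involved $j-1$ or $n$ now involve $j$ or $n+1$. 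The structure therefore mixes a ``row--1'' part (the $\lambda_{1,*}$ or $\lambda_{*,1}$ factors) and a ``row--2'' part (the corresponding $\lambda_{2,*}$ or $\lambda_{*,2}$ factors), together with the shared generator $\lambda_{j,n+1}$ or $\lambda_{n+1,j}$.

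The manipulation scheme is then uniform across the six cases. First, move the $\lambda_{2,*}$ and $\lambda_{*,2}$ factors past the nearest row--1 factors using the available commutativity relations (which hold because e.g.\ $\{2,j\}\cap\{1,n+1\}=\emptyset$), so as to collect the row--1 factors into a single contiguous product flanking $\lambda_{j,n+1}$. Second, recognise that contiguous block as one of the six long relations in $R_{1,j,n+1}$ and rewrite it using that relation. Third, cancel the now identical row--1 block from both sides. The leftover identity is precisely the desired relation from $R_{2,j,n+1}$; one checks that the six original relations map in parallel to the six relations of $R_{2,j,n+1}$ in the order given.

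The main obstacle is purely bookkeeping: one must check, for each of the six relations, that the particular commutativity pairs invoked have disjoint indices (so they really are available in $VP_{n+1}$), and that the particular long relation from $R_{1,j,n+1}$ used to cancel has exactly the cyclic pattern produced by $s_0$. Apart from that, the argument is mechanical and proceeds by a single display-line calculation per relation.
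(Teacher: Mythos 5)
Your proposal follows exactly the paper's own argument: the paper's proof of this lemma consists precisely of the six displayed computations preceding it, each applying $s_0$ via Proposition~\ref{p3.1}, moving the row--2 factors past the row--1 factors with commutativity relations, invoking the corresponding long relation from $R_{1,j,n+1}$ supplied by Lemma~\ref{lem1}, and cancelling. The only caveat (which the paper shares) is the edge case $n=4$, $j=3$, where the triple $\{1,3,5\}$ also fails the hypotheses of Lemma~\ref{lem1}, so $R_{1,3,5}$ must be established separately --- as the paper does afterwards by applying $s_1$ to $R_{1,2,4}$.
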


Next, suppose that $i = 1$ in the set $\{ i, j, n+1 \}$. Since $n \geq 4$ and we can not use Lemma \ref{lem1} for the relations $R_{i,j,n+1}$, we see that it is possible only in the case $j = 3$, $n+1 = 5$. Hence we have to prove that the relations $R_{1,3,5}$ follow from relations $s_k(\mathcal{R}^V(4))$ for some $k$.

Consider relations $R_{1,2,4}$ in $VP_4$ and acting on them by $s_1$. The first relation in $R_{1,2,4}$ has the form
$$
\lambda_{12} \lambda_{14} \lambda_{24} = \lambda_{24} \lambda_{14} \lambda_{12}.
$$
Acting on it by $s_1$ we get
$$
(\lambda_{13} \lambda_{12}) \lambda_{15} (\lambda_{25} \lambda_{35}) = (\lambda_{25} \lambda_{35}) \lambda_{15} (\lambda_{13} \lambda_{12}).
$$
Note that relations $R_{1,2,5}$ satisfy condition 3) in Lemma \ref{lem1}. Using the first relation from this set:
$$
\lambda_{12} \lambda_{15} \lambda_{25} = \lambda_{25} \lambda_{15} \lambda_{12},
$$
we get
$$
\lambda_{13} (\lambda_{25} \lambda_{15} \lambda_{12}) \lambda_{35} = \lambda_{25} \lambda_{35} \lambda_{15} \lambda_{13} \lambda_{12}.
$$
Using the commutativity relations $\lambda_{13} \lambda_{25} = \lambda_{25} \lambda_{13}$ and $\lambda_{12} \lambda_{35} = \lambda_{35} \lambda_{12}$, we have
$$
(\lambda_{25} \lambda_{13}) \lambda_{15} (\lambda_{35} \lambda_{12}) = \lambda_{25} \lambda_{35} \lambda_{15} \lambda_{13} \lambda_{12}.
$$
After cancelation we arrive to the relation
$$
\lambda_{13} \lambda_{15} \lambda_{35} = \lambda_{35} \lambda_{15} \lambda_{13}.
$$
This is the first relation from $R_{1,3,5}$.

The second relation in $R_{1,2,4}$ has the form
$$
\lambda_{21} \lambda_{24} \lambda_{14} = \lambda_{14} \lambda_{24} \lambda_{21}.
$$
Acting on it by $s_1$ we get
$$
(\lambda_{21} \lambda_{31}) (\lambda_{25} \lambda_{35}) \lambda_{15} = \lambda_{15} (\lambda_{25} \lambda_{35}) (\lambda_{21} \lambda_{31}).
$$
Using the commutativity relation $\lambda_{31} \lambda_{25} = \lambda_{25} \lambda_{31}$ and $\lambda_{35} \lambda_{21} = \lambda_{21} \lambda_{35}$, we have
$$
\lambda_{21} (\lambda_{25} \lambda_{31}) \lambda_{35} \lambda_{15} = \lambda_{15} \lambda_{25} (\lambda_{21} \lambda_{35}) \lambda_{31}.
$$
By Lemma \ref{lem1} we have relation
$$
\lambda_{15} \lambda_{25} \lambda_{21} = \lambda_{21} \lambda_{25} \lambda_{15}.
$$
Using it we get
$$
\lambda_{21} \lambda_{25} \lambda_{31} \lambda_{35} \lambda_{15} = (\lambda_{21} \lambda_{25} \lambda_{15}) \lambda_{35} \lambda_{31}.
$$
After cancelation we arrive to the relation
$$
\lambda_{31} \lambda_{35} \lambda_{15} = \lambda_{15} \lambda_{35} \lambda_{31}.
$$
This is the second relation from $R_{1,3,5}$.

Using the third relation in the set $R_{1,2,4}$:
$$
\lambda_{14} \lambda_{12} \lambda_{42} = \lambda_{42} \lambda_{12} \lambda_{14}
$$
and acting by $s_1$ we get
$$
\lambda_{15} (\lambda_{13} \lambda_{12}) (\lambda_{53} \lambda_{52}) = (\lambda_{53} \lambda_{52}) (\lambda_{13} \lambda_{12}) \lambda_{15}.
$$
Using the commutativity relation $\lambda_{12} \lambda_{53} = \lambda_{53} \lambda_{12}$ and $\lambda_{52} \lambda_{13} = \lambda_{13} \lambda_{52}$, we have
$$
\lambda_{15} \lambda_{13} (\lambda_{53} \lambda_{12}) \lambda_{52} = \lambda_{53} (\lambda_{13} \lambda_{52}) \lambda_{12} \lambda_{15}.
$$
Using the relation
$$
\lambda_{52} \lambda_{12} \lambda_{15} = \lambda_{15} \lambda_{12} \lambda_{52},
$$
which we have by Lemma \ref{lem1} we get
$$
\lambda_{15} \lambda_{13} \lambda_{53} \lambda_{12} \lambda_{52} = \lambda_{53} \lambda_{13} (\lambda_{15} \lambda_{12} \lambda_{52}).
$$
After cancelation we arrive to the relation
$$
\lambda_{15} \lambda_{13} \lambda_{53} = \lambda_{53} \lambda_{13} \lambda_{15}.
$$
This is the third relation in $R_{1,3,5}$.

The forth relation in $R_{1,2,4}$ has the form
$$
\lambda_{41} \lambda_{42} \lambda_{12} = \lambda_{12} \lambda_{42} \lambda_{41}.
$$
Acting on it by $s_1$ we get
$$
\lambda_{51} (\lambda_{53} \lambda_{52}) (\lambda_{13} \lambda_{12}) =  (\lambda_{13} \lambda_{12}) (\lambda_{53} \lambda_{52}) \lambda_{51}.
$$
Using the commutativity relation $\lambda_{52} \lambda_{13} = \lambda_{13} \lambda_{52}$ and $\lambda_{12} \lambda_{53} = \lambda_{53} \lambda_{12}$, we have
$$
\lambda_{51} \lambda_{53} (\lambda_{13} \lambda_{52}) \lambda_{12} =  \lambda_{13} (\lambda_{53} \lambda_{12}) \lambda_{52} \lambda_{51}.
$$
By Lemma \ref{lem1} we have relation
$$
\lambda_{12} \lambda_{52} \lambda_{51} = \lambda_{51} \lambda_{52} \lambda_{12}.
$$
Using it we get
$$
\lambda_{51} \lambda_{53} \lambda_{13} \lambda_{52} \lambda_{12} =  \lambda_{13} \lambda_{53} (\lambda_{51} \lambda_{52} \lambda_{12}).
$$
After cancelation we arrive to the relation
$$
\lambda_{51} \lambda_{53} \lambda_{13}  = \lambda_{13} \lambda_{53} \lambda_{51}.
$$
This is the forth relation in $R_{1,3,5}$.

Using the firth relation in the set $R_{1,2,4}$:
$$
\lambda_{24} \lambda_{21} \lambda_{41} = \lambda_{41} \lambda_{21} \lambda_{24}
$$
and acting by $s_1$ we get
$$
(\lambda_{25} \lambda_{35}) (\lambda_{21} \lambda_{31}) \lambda_{51} = \lambda_{51} (\lambda_{21} \lambda_{31}) (\lambda_{25} \lambda_{35}).
$$
Using the commutativity relation $\lambda_{35} \lambda_{21} = \lambda_{21} \lambda_{35}$ and $\lambda_{31} \lambda_{25} = \lambda_{25} \lambda_{31}$, we have
$$
\lambda_{25} (\lambda_{21} \lambda_{35}) \lambda_{31} \lambda_{51} = \lambda_{51} \lambda_{21} (\lambda_{25} \lambda_{31}) \lambda_{35}.
$$
Using the relation
$$
\lambda_{51} \lambda_{21} \lambda_{25} = \lambda_{25} \lambda_{21} \lambda_{51},
$$
which we have by Lemma \ref{lem1} we get
$$
\lambda_{25} \lambda_{21} \lambda_{35} \lambda_{31} \lambda_{51} = (\lambda_{25} \lambda_{21} \lambda_{51}) \lambda_{31} \lambda_{35}.
$$
After cancelation we arrive to the relation
$$
\lambda_{35} \lambda_{31} \lambda_{51} = \lambda_{51} \lambda_{31} \lambda_{35}.
$$
This is the firth relation from $R_{1,3,5}$.

The sixth relation in $R_{1,2,4}$ has the form
$$
\lambda_{42} \lambda_{41} \lambda_{21} = \lambda_{21} \lambda_{41} \lambda_{42}.
$$
Acting on it by $s_1$ we get
$$
(\lambda_{53} \lambda_{52}) \lambda_{51} (\lambda_{21} \lambda_{31}) = (\lambda_{21} \lambda_{31}) \lambda_{51} (\lambda_{53} \lambda_{52}).
$$
By Lemma \ref{lem1} we have relation
$$
\lambda_{52} \lambda_{51} \lambda_{21} = \lambda_{21} \lambda_{51} \lambda_{52},
$$
from which
$$
\lambda_{53} (\lambda_{21} \lambda_{51} \lambda_{52}) \lambda_{31} = \lambda_{21} \lambda_{31} \lambda_{51} \lambda_{53} \lambda_{52}.
$$
Using the commutativity relation $\lambda_{53} \lambda_{21} = \lambda_{21} \lambda_{53}$ and $\lambda_{52} \lambda_{31} = \lambda_{31} \lambda_{52}$, we have
$$
(\lambda_{21} \lambda_{53}) \lambda_{51} (\lambda_{31} \lambda_{52}) = \lambda_{21} \lambda_{31} \lambda_{51} \lambda_{53} \lambda_{52}.
$$
After cancelation we arrive to the relation
$$
\lambda_{53} \lambda_{51} \lambda_{31}  = \lambda_{31} \lambda_{51} \lambda_{53}.
$$
This is the sixth relation from $R_{1,3,5}$.

\subsection{Simplicial group $T_*$} \label{T}

The simplicial group $T_*$ was defined in the paper \cite{BW}. In the same paper  was  proved that  $T_3$ is generated by elements
$$
a_{31},~~a_{22},~~a_{13},~~b_{31},~~b_{22},~~b_{13}
$$
and is defined by relations
$$
[a_{31}, a_{22}]^{c_{11}^k c_{21}^m} = [a_{31}, a_{13}]^{c_{11}^k c_{21}^m} = [a_{22}, a_{13}]^{c_{11}^k c_{21}^m} = 1,
$$
$$
[b_{31}, b_{22}]^{c_{11}^k c_{21}^m} = [b_{31}, b_{13}]^{c_{11}^k c_{21}^m} = [b_{22}, b_{13}]^{c_{11}^k c_{21}^m} = 1,
$$
 that can be written in the form
$$
[a_{31}, a_{22}^{c_{22}^{m} c_{31}^{-m}}] = [a_{31}, a_{13}^{c_{13}^{k} c_{22}^{m-k} c_{31}^{-m}}] = [a_{22}^{c_{22}^m c_{31}^{-m}}, a_{13}^{c_{13}^k c_{22}^{m-k} c_{31}^{-m}}] = 1,
$$
$$
[b_{31}, b_{22}^{c_{22}^{m} c_{31}^{-m}}] = [b_{31}, b_{13}^{c_{13}^{k} c_{22}^{m-k} c_{31}^{-m}}] = [b_{22}^{c_{22}^m c_{31}^{-m}}, b_{13}^{c_{13}^k c_{22}^{m-k} c_{31}^{-m}}] = 1.
$$
where $k, m \in \mathbb{Z}$.

In the general case we will prove

\begin{thm}\label{T_n}
The group $T_n$, $n \geq 2$ is generated by elements
$$
a_{i,n+1-i},~~b_{i,n+1-i},~~i = 1, 2, \ldots n,
$$
and is defined by relations
$$
[a_{i,n+1-i}, a_{j,n+1-j}]^{c_{11}^{k_1} c_{21}^{k_2} \ldots c_{n-1,1}^{k_{n-1}} },
$$
$$
[b_{i,n+1-i}, b_{j,n+1-j}]^{c_{11}^{k_1} c_{21}^{k_2} \ldots c_{n-1,1}^{k_{n-1}} },
$$
where $1 \leq i \not= j \leq n$, $k_l \in \mathbb{Z}$.
\end{thm}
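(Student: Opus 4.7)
The plan is to proceed by induction on $n$, with the base case $n=3$ established in \cite{BW} (quoted above for $T_3$). Before carrying out the induction I would identify the generators of $T_n$ explicitly. Since $T_n$ is the subgroup of $VP_{n+1}$ cabled from $\lambda_{1,2}$ and $\lambda_{2,1}$, by construction it is generated by words $s_{k_{n-2}}\cdots s_{k_1}\lambda_{1,2}$ and $s_{k_{n-2}}\cdots s_{k_1}\lambda_{2,1}$ for $0\leq k_1<\cdots<k_{n-2}\leq n-1$. By the same rewriting used in Section~\ref{simp} to pass from $\mu^{k,l}_{i,j}$ to $a_{k,l-k}, b_{k,l-k}$, these $2n$ elements are precisely the $a_{i,n+1-i}$ and $b_{i,n+1-i}$ for $1\leq i\leq n$. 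This deals with the generating set.

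For the relations, the strategy is to lift the presentation of $T_{n-1}$ provided by the inductive hypothesis to $T_n$ by applying the degeneracy homomorphisms $s_t:VP_n\to VP_{n+1}$ and then invoking Theorem~\ref{lift}. The degeneracy formulae
$$
s_k a_{i,j}=\begin{cases} a_{i,j}& k\geq i+j\\ a_{i,j+1}& i\leq k<i+j\\ a_{i+1,j+1}& 0\leq k<i\end{cases}
$$
(and the analogue for $b_{i,j}$) show that each generator $a_{i,n-i}$ of $T_{n-1}$ lifts to an $a_{i',n+1-i'}$ of $T_n$, so a commutator relation $[a_{i,n-i},a_{j,n-j}]^{w}=1$ in $T_{n-1}$, with $w$ a word in the $c_{k,1}$, becomes a relation of the form $[a_{i',n+1-i'},a_{j',n+1-j'}]^{w'}=1$ in $T_n$ with $w'$ obtained from $w$ by the same substitution. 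Running $t=0,1,\ldots,n-1$ and intersecting with $T_n$ produces exactly the commutators listed in the theorem, conjugated by all monomials $c_{11}^{k_1}c_{21}^{k_2}\cdots c_{n-1,1}^{k_{n-1}}$.

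The core work is verifying completeness: that no further relations among the cabled generators are needed. Here I would use Theorem~\ref{lift} directly. That theorem says $\mathcal{R}^V(n)\cup\bigcup_{t=0}^{n-1}s_t(\mathcal{R}^V(n))$ is a complete presentation of $VP_{n+1}$; restricting to the subgroup $T_n$, any relation among the cabled generators is a consequence of the lifted relations, which are precisely our conjugated commutators. The main obstacle will be the bookkeeping required to show that the long triple relations of $\mathcal{R}^V(n+1)$ involving three distinct indices do not restrict to nontrivial relations inside $T_n$ (they mix the cabled generators of $T_n$ with generators lying in complementary factors of the semidirect product decomposition \eqref{eq:s_d_dec}, and so are absorbed by the semidirect-product structure). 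A secondary, more technical point is verifying that conjugation by each $c_{k,1}$ commutes correctly with the degeneracy operations so that the exponents $k_1,\ldots,k_{n-1}\in\mathbb Z$ range independently, which amounts to checking that $\{c_{1,1},\ldots,c_{n-1,1}\}$ generate a free abelian action on the commutator subgroup modulo the listed relations, following the same pattern as the $T_3$ case.
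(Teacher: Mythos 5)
There is a genuine gap, and it sits exactly where you have placed the phrase ``the main obstacle will be the bookkeeping.'' Theorem~\ref{lift} gives a complete set of defining relations for the \emph{ambient} group $VP_{n+1}$ in the generators $\lambda_{ij}$; it does not, by any general principle, yield a presentation of the \emph{subgroup} $T_n$. A presentation of a group never restricts to a presentation of a subgroup without additional work (a Reidemeister--Schreier computation, or an explicit identification of $T_n$ as a free factor, retract, or normal closure inside a known semidirect-product decomposition). Your assertion that ``restricting to the subgroup $T_n$, any relation among the cabled generators is a consequence of the lifted relations'' is precisely the statement that needs proof, and the sketch offers no mechanism for it. The difficulty is visible already in the shape of the claimed relations: the conjugating elements $c_{11},\ldots,c_{n-1,1}$ do not lie in $T_n$ (they are cabled generators of the lower stages $T_1,\ldots,T_{n-1}$), so the theorem is implicitly presenting $T_n$ as a normal closure inside a larger group, and one must show both that each conjugate $[a_{i,n+1-i},a_{j,n+1-j}]^{c_{11}^{k_1}\cdots c_{n-1,1}^{k_{n-1}}}$ is expressible as a word in the stated generators and that \emph{every} relation of $T_n$ is a consequence of these. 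Your remark that the long triple relations ``are absorbed by the semidirect-product structure'' and that the exponents ``range independently'' are assertions of the two hardest points, not arguments for them; the $T_3$ case in \cite{BW} was established by an explicit computation of exactly this kind, and the induction step would have to reproduce that computation in general. Note also that the paper itself states this theorem without supplying a proof, so there is no argument in the text against which to calibrate; but as written your proposal reduces the theorem to Theorem~\ref{lift} by an inference that Theorem~\ref{lift} does not support.

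A secondary point: the theorem is asserted for $n\geq 2$, so the base of your induction should be $T_2\leq VP_3$ (treated in \cite{BMVW}), not $T_3$; and you should check the degeneracy formulae carefully when lifting generators of $T_{n-1}$, since the stated rule $s_k a_{i,j}=a_{i+1,j+1}$ for $k<i$ is inconsistent with $s_t\mu^{k,l}_{i,j}=\mu^{k+1,l+1}_{i,j}$ under the dictionary $a_{k,l-k}=\mu^{k,l}_{1,2}$ (the latter gives $a_{i+1,j}$, not $a_{i+1,j+1}$), so the index bookkeeping in your lifting step needs to be pinned down before the induction can be run.
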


\section{$VP_n$ as a subgroup of $VB_{n+1}$} \label{conj}

In the previous section we shown how it is possible to construct $VP_n$ from $VP_{n-1}$ using operation cabling. In this section we will show how it is possible to construct $VP_{n+1}$, using the action of the symmetric group $S_{n+1} = \langle \rho_1, \rho_2, \ldots, \rho_{n_1} \rangle$, which is a subgroup of the virtual braid group $VB_{n+1} = VP_{n+1} \leftthreetimes S_{n+1}$. Recall that $S_{n+1}$ acts on the generators of $VP_{n+1}$ by the rule
$$
\rho_k \lambda_{ij} \rho_k = \lambda_{\rho_k(i),\rho_k(j)},~~~k = 1, 2, \ldots, n-1.
$$

The symmetric group $S_{n+1}$ ia s disjoint union of cosets by $S_n$:
$$
S_{n+1} = S_{n} e \sqcup S_{n} \rho_n \sqcup S_{n} \rho_n \rho_{n-1} \sqcup \ldots \sqcup S_{n} \rho_n \rho_{n-1} \ldots \rho_{1}.
$$

We will denote $\mathcal{X}_k$ the set of generators of $VP_k$, $k \geq 2$, i.e.
$$
\mathcal{X}_k = \{ \lambda_{ij} ~|~1 \leq i \not= j \leq k \};
$$
$\mathcal{R}_k$ will denote the set of defining relations of $VP_k$. In particular, $\mathcal{LR}_k$ will denote the set of long relations and $\mathcal{CR}_k$ the set of commutativity relations. It is evident that
$$
\mathcal{R}_k = \mathcal{LR}_k \cup \mathcal{CR}_k.
$$
Since, $VP_3$ does not contain commutativity relations, then $\mathcal{R}_3 = \mathcal{LR}_3.$

Let $k > 2$ and $1 \leq i < j < l \leq k$ be three  distinct integer numbers. Denote by $\mathcal{R}^{ijl}_k$ the following set of long defining relations from $\mathcal{R}_k$:
$$
\lambda_{ij} \lambda_{il} \lambda_{jl} = \lambda_{jl}  \lambda_{il} \lambda_{ij},~~~\lambda_{ji} \lambda_{jl} \lambda_{il} = \lambda_{il}  \lambda_{jl} \lambda_{ji},
$$
$$
\lambda_{il} \lambda_{ij} \lambda_{lj} = \lambda_{lj}  \lambda_{ij} \lambda_{il},~~~\lambda_{li} \lambda_{lj} \lambda_{ij} = \lambda_{ij}  \lambda_{lj} \lambda_{li},
$$
$$
\lambda_{jl} \lambda_{ji} \lambda_{li} = \lambda_{li}  \lambda_{ji} \lambda_{jl},~~~\lambda_{lj} \lambda_{li} \lambda_{ji} = \lambda_{ji}  \lambda_{li} \lambda_{lj}.
$$
Then
$$
\mathcal{LR}_k = \bigsqcup_{1 \leq i<j<l \leq k} \mathcal{R}^{ijl}_k.
$$
In particular,
$$
\mathcal{R}_3 =  \mathcal{R}^{123}_3.
$$

Let the integers $i,j,l,m \in \{ 1, 2, \ldots, k\}$ satisfy the conditions
$$
 i < j, ~~l < m,~~j > m.
$$
Denote
$$
\mathcal{R}^{i,j,l,m}_k = \{ \lambda_{ij}^* \lambda_{lm}^* = \lambda_{lm}^* \lambda_{ij}^* \}
$$
the set of four commutativity relation with fixed indexes, then
$$
\mathcal{CR}_k = \bigsqcup_{ i < j,~ l < m,~ j > m} \mathcal{R}^{i,j,l,m}_k
$$
is the full set of the  commutativity relations in $VP_k$

Take the set of generators of $VP_3$:
$$
\mathcal{X}_3 = \{ \lambda_{12},  \lambda_{21}, \lambda_{13}, \lambda_{23}, \lambda_{31}, \lambda_{32} \}
$$
and acting on it by coset representatives of $S_4$ by $S_3$ we get
$$
\mathcal{X}_3^{\rho_3} = \{ \lambda_{12},  \lambda_{21}, \lambda_{14}, \lambda_{24}, \lambda_{41}, \lambda_{42} \},
$$
$$
\mathcal{X}_3^{\rho_3 \rho_2} = \{ \lambda_{13},  \lambda_{31}, \lambda_{14}, \lambda_{34}, \lambda_{41}, \lambda_{43} \},
$$
$$
\mathcal{X}_3^{\rho_3 \rho_2 \rho_1} = \{ \lambda_{23},  \lambda_{32}, \lambda_{24}, \lambda_{34}, \lambda_{42}, \lambda_{43} \}.
$$
We see that
$$
\mathcal{X}_4 = \mathcal{X}_3 \cup \mathcal{X}_3^{\rho_3} \cup \mathcal{X}_3^{\rho_3 \rho_2}.
$$
In the general case we have the similar result

\begin{prop}
For $n \geq 3$ the following equality holds
$$
\mathcal{X}_{n+1} = \mathcal{X}_n \cup \mathcal{X}_n^{\rho_n} \cup \mathcal{X}_n^{\rho_n \rho_{n-1}}.
$$
\end{prop}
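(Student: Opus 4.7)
The plan is to reduce the claim to a short combinatorial covering argument about index sets. Using the iterated conjugation rule $\rho_k \lambda_{ij} \rho_k = \lambda_{\rho_k(i), \rho_k(j)}$, each subset $\mathcal{X}_n^{g}$ has the form
$$
\mathcal{X}_n^{g} = \{ \lambda_{ij} : i \ne j,\ i, j \in A_g\},
$$
where $A_g \subseteq \{1, \ldots, n+1\}$ is the image of $\{1, \ldots, n\}$ under the permutation of indices that conjugation by $g$ induces.

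The first step is to write down the three relevant index sets explicitly. One has $A_e = \{1, \ldots, n\}$ and $A_{\rho_n} = \rho_n(\{1, \ldots, n\}) = \{1, \ldots, n-1, n+1\}$. For the third coset representative, $\rho_{n-1}\rho_n$ acts on indices as the $3$-cycle sending $n-1 \mapsto n \mapsto n+1 \mapsto n-1$ and fixing everything else, so
$$
A_{\rho_n \rho_{n-1}} = (\rho_{n-1}\rho_n)(\{1, \ldots, n\}) = \{1, \ldots, n-2, n, n+1\}.
$$
Crucially, each $A_g$ is $\{1, \ldots, n+1\}$ with exactly one of $n+1, n, n-1$ deleted (respectively for $g = e,\, \rho_n,\, \rho_n\rho_{n-1}$).

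The inclusion $\mathcal{X}_n \cup \mathcal{X}_n^{\rho_n} \cup \mathcal{X}_n^{\rho_n \rho_{n-1}} \subseteq \mathcal{X}_{n+1}$ is then immediate. For the reverse inclusion, fix $\lambda_{ij} \in \mathcal{X}_{n+1}$. The two-element set $\{i, j\}$ cannot exhaust the three-element set $\{n-1, n, n+1\}$, so at least one of $n-1, n, n+1$ lies outside $\{i, j\}$; deleting that element from $\{1, \ldots, n+1\}$ produces one of the three index sets $A_g$, and it still contains $\{i, j\}$, whence $\lambda_{ij} \in \mathcal{X}_n^{g}$. There is no real obstacle: once the three index sets are identified, the verification is essentially a pigeonhole count, and the hypothesis $n \geq 3$ is used only to ensure $n-1 \geq 2$, so that the three distinguished indices $n-1, n, n+1$ are genuine and distinct.
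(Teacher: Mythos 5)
Your proof is correct and follows essentially the same route as the paper: both amount to computing that conjugation by $\rho_n$ and by $\rho_n\rho_{n-1}$ moves the index set $\{1,\dots,n\}$ to $\{1,\dots,n+1\}\setminus\{n\}$ and $\{1,\dots,n+1\}\setminus\{n-1\}$ respectively, so that every pair of distinct indices in $\{1,\dots,n+1\}$ lies in one of the three sets. Your pigeonhole phrasing is just a tidier packaging of the paper's explicit listing of which new generators $\lambda_{i,n+1}^*$ come from which conjugate.
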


\begin{proof}
Any generator in $\mathcal{X}_{n+1} \setminus \mathcal{X}_{n}$ has the form $\lambda_{i,n+1}^*$ for some $i$, $1\leq i \leq n$. Take the generator $\lambda_{1n}^* \in  \mathcal{X}_{n}$ and acting on it by conjugation of $\rho_n$:
$$
\left( \lambda_{1n}^* \right)^{\rho_n} = \lambda_{1,n+1}^*,~~~\left( \lambda_{2n}^* \right)^{\rho_n} = \lambda_{2,n+1}^*, \ldots, \left( \lambda_{n-1,n}^* \right)^{\rho_n} = \lambda_{n-1,n+1}^*.
$$
To find the last generator $\lambda_{n,n+1}^*$, take the generator $\lambda_{n-1,n}^*$ and acting of conjugation by $\rho_n \rho_{n-1}$ we get
$$
\left( \lambda_{n-1,n}^* \right)^{\rho_n \rho_{n-1}} = \left( \lambda_{n-1,n+1}^* \right)^{\rho_{n-1}} = \lambda_{n,n+1}^*.
$$

\end{proof}

To find the set of defining relations in $\mathcal{R}_4$, take the defining relations of $\mathcal{R}_3 = \mathcal{R}^{123}$ and acting by coset representatives we get
$$
\mathcal{R}_3^{\rho_3} = \mathcal{R}_4^{124},~~~\mathcal{R}_3^{\rho_3 \rho_2} = \mathcal{R}_4^{134},~~~\mathcal{R}_3^{\rho_3 \rho_2 \rho_1} = \mathcal{R}_4^{234}.
$$
Since
$$
\mathcal{LR}_4 = \mathcal{R}_4^{123} \sqcup \mathcal{R}_4^{124} \sqcup \mathcal{R}_4^{134} \sqcup \mathcal{R}_4^{234} ~~\mbox{and}~~\mathcal{R}_4^{123} = \mathcal{R}_3^{123} = \mathcal{R}_3,
$$
we get
$$
\mathcal{LR}_4 = \mathcal{R}_3 \sqcup \mathcal{R}_3^{\rho_3} \sqcup \mathcal{R}_3^{\rho_3 \rho_2} \sqcup \mathcal{R}_3^{\rho_3 \rho_2 \rho_1}.
$$
In $VP_3$ we don't have commutativity relations hence, we have

\begin{prop}
$$
\mathcal{R}_4 = \mathcal{R}_3 \sqcup \mathcal{R}_3^{\rho_3} \sqcup \mathcal{R}_3^{\rho_3 \rho_2} \sqcup \mathcal{R}_3^{\rho_3 \rho_2 \rho_1} \sqcup \mathcal{CR}_4.
$$
\end{prop}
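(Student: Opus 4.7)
The statement is the assembly of two pieces of information already collected: the index‐triple decomposition of the long relations $\mathcal{LR}_4$ and the identification of each block $\mathcal{R}_4^{ijl}$ with a conjugate of $\mathcal{R}_3$. So the plan is to verify these two facts carefully and then add the commutativity relations as a separate summand.

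First I would write $\mathcal{R}_4 = \mathcal{LR}_4 \sqcup \mathcal{CR}_4$ using the partition $\mathcal{R}_k = \mathcal{LR}_k \cup \mathcal{CR}_k$ already introduced, and apply the previously displayed decomposition $\mathcal{LR}_4 = \bigsqcup_{1 \leq i<j<l \leq 4} \mathcal{R}_4^{ijl}$, giving the four blocks $\mathcal{R}_4^{123}$, $\mathcal{R}_4^{124}$, $\mathcal{R}_4^{134}$, $\mathcal{R}_4^{234}$. The trivial identification $\mathcal{R}_4^{123} = \mathcal{R}_3^{123} = \mathcal{R}_3$ handles the first block.

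Next I would verify the three nontrivial identifications by computing the action of the coset representatives on the index set $\{1,2,3\}$ using the formula $\rho_k \lambda_{ij} \rho_k = \lambda_{\rho_k(i),\rho_k(j)}$. A routine check shows that conjugation by $\rho_3$ fixes $\{1,2\}$ and sends $3 \mapsto 4$, so $\mathcal{R}_3^{\rho_3} = \mathcal{R}_4^{124}$; conjugation by $\rho_3 \rho_2$ acts on indices by $1 \mapsto 1$, $2 \mapsto 3$, $3 \mapsto 4$, giving $\mathcal{R}_3^{\rho_3 \rho_2} = \mathcal{R}_4^{134}$; and conjugation by $\rho_3 \rho_2 \rho_1$ acts as $1 \mapsto 2$, $2 \mapsto 3$, $3 \mapsto 4$, giving $\mathcal{R}_3^{\rho_3 \rho_2 \rho_1} = \mathcal{R}_4^{234}$. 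These three calculations are parallel to the verification of the generator lists $\mathcal{X}_3^{\rho_3}$, $\mathcal{X}_3^{\rho_3 \rho_2}$, $\mathcal{X}_3^{\rho_3 \rho_2 \rho_1}$ given just before the statement, so the same conjugations that rearrange the generators also rearrange the relations block by block, since the action is by an automorphism of $VP_4$. Combining the four identifications yields
\[
\mathcal{LR}_4 = \mathcal{R}_3 \sqcup \mathcal{R}_3^{\rho_3} \sqcup \mathcal{R}_3^{\rho_3 \rho_2} \sqcup \mathcal{R}_3^{\rho_3 \rho_2 \rho_1}.
\]

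Finally I would remark that $\mathcal{CR}_3 = \emptyset$ (in $VP_3$ three distinct indices never split into two disjoint pairs), so the commutativity relations of $VP_4$ do not arise from any conjugate of $\mathcal{R}_3$, and they must be appended as the separate summand $\mathcal{CR}_4$; this gives the displayed formula. The only mild obstacle is bookkeeping the conjugation convention consistently (since $\rho_3 \rho_2$ and $\rho_3 \rho_2 \rho_1$ are not involutions), but once the permutation action on indices is computed as above, the disjointness of the four index-triple blocks is immediate from the disjointness of the triples $\{1,2,3\}$, $\{1,2,4\}$, $\{1,3,4\}$, $\{2,3,4\}$, and the disjointness of $\mathcal{LR}_4$ from $\mathcal{CR}_4$ is built into the partition $\mathcal{R}_4 = \mathcal{LR}_4 \cup \mathcal{CR}_4$.
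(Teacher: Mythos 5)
Your proposal is correct and follows essentially the same route as the paper: split $\mathcal{R}_4$ into $\mathcal{LR}_4 \sqcup \mathcal{CR}_4$, decompose $\mathcal{LR}_4$ into the four index-triple blocks $\mathcal{R}_4^{ijl}$, identify the three nontrivial blocks with $\mathcal{R}_3^{\rho_3}$, $\mathcal{R}_3^{\rho_3\rho_2}$, $\mathcal{R}_3^{\rho_3\rho_2\rho_1}$ via the permutation action on indices, and observe that $\mathcal{CR}_3=\emptyset$ so the commutativity relations must be added separately. Your index computations ($1\mapsto1,2\mapsto3,3\mapsto4$ for $\rho_3\rho_2$, etc.) agree with the paper's generator lists, so nothing further is needed.
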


In the general case we can prove

\begin{thm}
For $n \geq 4$ we have
$$
\mathcal{R}_{n+1} = \mathcal{R}_n \sqcup \mathcal{R}_n^{\rho_n} \sqcup \mathcal{R}_n^{\rho_n \rho_{n-1}} \sqcup \ldots \sqcup \mathcal{R}_n^{\rho_n \rho_{n-1} \ldots \rho_1}.
$$
\end{thm}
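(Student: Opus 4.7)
The plan is as follows. Each defining relation in the standard presentation of $VP_{n+1}$ given by (\ref{rel}) and (\ref{relation}) involves at most four distinct indices from $\{1,\dots,n+1\}$: three in the long relations and four in the commutativity relations. Since the hypothesis $n\geq 4$ forces $n+1\geq 5$, a pigeonhole argument guarantees that every such relation $r$ omits at least one index $l\in\{1,\dots,n+1\}$. The whole theorem then reduces to matching each $r$ with a conjugate of $\mathcal{R}_n$ indexed by such a missing $l$.

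First I would pin down the combinatorics of the coset representatives. Using the action $\rho_k\lambda_{ij}\rho_k=\lambda_{\rho_k(i),\rho_k(j)}$ and extending the sample computations of $\mathcal{X}_3^{\rho_3}$, $\mathcal{X}_3^{\rho_3\rho_2}$, $\mathcal{X}_3^{\rho_3\rho_2\rho_1}$ carried out above, a short induction on the length of the word shows that for $1\leq l\leq n$ the element $\rho_n\rho_{n-1}\cdots\rho_l$ acts on $\{1,\dots,n\}$ as the unique order-preserving bijection onto $\{1,\dots,n+1\}\setminus\{l\}$, namely $i\mapsto i$ for $i<l$ and $i\mapsto i+1$ for $l\leq i\leq n$. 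Consequently, conjugation by $\rho_n\rho_{n-1}\cdots\rho_l$ carries $\mathcal{X}_n$ bijectively onto the set of all $\lambda_{ij}^*$ whose indices lie in $\{1,\dots,n+1\}\setminus\{l\}$, and carries $\mathcal{R}_n$ onto the corresponding set of standard defining relations; the order-preserving property also ensures that the normalization $i<j$, $l<m$, $j>m$ in the definition of $\mathcal{R}^{i,j,l,m}_k$ survives the conjugation intact.

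Granted this, the inclusion $\mathcal{R}_{n+1}\subseteq\mathcal{R}_n\cup\bigcup_{l=1}^n\mathcal{R}_n^{\rho_n\rho_{n-1}\cdots\rho_l}$ is immediate: for each $r\in\mathcal{R}_{n+1}$ choose an index $l\in\{1,\dots,n+1\}$ absent from $r$; if $l=n+1$ then $r\in\mathcal{R}_n$, otherwise $r$ is the conjugate by $\rho_n\cdots\rho_l$ of the unique matching relation of $\mathcal{R}_n$ and hence lies in $\mathcal{R}_n^{\rho_n\cdots\rho_l}$. The reverse inclusion is automatic because conjugation by elements of $VB_{n+1}$ preserves validity of equations in $VP_{n+1}$, and $\mathcal{R}_n$ already consists of genuine relations of $VP_{n+1}$ via the inclusion $VP_n\hookrightarrow VP_{n+1}$. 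Combined with the preceding proposition, which shows that $\mathcal{X}_{n+1}=\mathcal{X}_n\cup\mathcal{X}_n^{\rho_n}\cup\mathcal{X}_n^{\rho_n\rho_{n-1}}$ is already generated by the conjugate pieces, the right-hand side therefore supplies a complete set of defining relations for $VP_{n+1}$.

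The argument is essentially a pigeonhole and carries no substantial obstacle; the crucial use of $n\geq 4$ is precisely to get $n+1>4$, the maximum number of indices occurring in a single defining relation. The main care lies in Step~2, the verification of the index action of $\rho_n\cdots\rho_l$, where the convention on permutation composition must be fixed carefully, matching the sample computations already given for $\mathcal{X}_3^{\rho_3\rho_2}$ and $\mathcal{X}_3^{\rho_3\rho_2\rho_1}$. A minor point worth flagging is that the union is not literally disjoint once $n\geq 5$, since a long relation uses only three indices and so sits inside several pieces simultaneously; the symbol $\sqcup$ in the statement should thus be read in the loose sense of \emph{union of sets of defining relations}.
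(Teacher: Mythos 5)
Your proof is correct and follows essentially the same route as the paper: both arguments rest on the observation that each defining relation of $VP_{n+1}$ omits at least one index $l$ (here $n\geq 4$ is used exactly as you say), and that conjugation by $\rho_n\rho_{n-1}\cdots\rho_l$ acts on indices as the order-preserving bijection $\{1,\dots,n\}\to\{1,\dots,n+1\}\setminus\{l\}$, so the relation is the conjugate of the matching relation of $\mathcal{R}_n$. The paper merely spells this out as a case analysis on whether $n$, $n-1$, $n-2$ or $n-3$ is the omitted index, and your remark that the union is not literally disjoint (in fact already for $n=4$, since a long relation omits two indices among $\{1,\dots,5\}$) is a fair observation about the statement rather than a gap in either proof.
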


\begin{proof}
Consider the set of long relations $\mathcal{R}_{n+1}^{i,j,n+1}$ which does not lie in $\mathcal{R}_{n}$. If $j\not= n$, then the relations $\mathcal{R}_{n}^{i,j,n}$ lie in $\mathcal{R}_{n}$ and acting by $\rho_n$ we get
$$
\left( \mathcal{R}_{n}^{i,j,n} \right)^{\rho_n} = \mathcal{R}_{n+1}^{i,j,n+1}.
$$
If $j=n$, but $i\not= n-1$, then
$$
\left( \mathcal{R}_{n}^{i,n-1,n}\right)^{\rho_n \rho_{n-1}} = \left(  \mathcal{R}_{n+1}^{i,n-1,n+1} \right)^{\rho_{n-1}} = \mathcal{R}_{n+1}^{i,n,n+1}.
$$
If $j=n$, $i = n-1$, then
$$
\left( \mathcal{R}_{n}^{n-2,n-1,n} \right)^{\rho_n \rho_{n-1} \rho_{n-2}} = \left( \mathcal{R}_{n+1}^{n-2,n-1,n+1} \right)^{\rho_{n-1} \rho_{n-2}} = \left( \mathcal{R}_{n+1}^{n-2,n,n+1} \right)^{\rho_{n-2}} = \mathcal{R}_{n+1}^{n-1,n,n+1}.
$$

Consider a set of commutativity relations
$$
\mathcal{R}^{i,n+1,l,m}_{n+1} \in \mathcal{R}_{n+1} \setminus \mathcal{R}_{n}.
$$
We will assume that $i < l < m$. Proofs for other cases is similar.

If $m \not= n$, then
$$
\left( \mathcal{R}^{i,n,l,m}_{n} \right)^{\rho_n} = \mathcal{R}^{i,n+1,l,m}_{n+1}.
$$
If $m= n$, but $l \not= n-1$, then
$$
\left( \mathcal{R}^{i,n,l,n-1}_{n} \right)^{\rho_n \rho_{n-1}} = \left( \mathcal{R}^{i,n+1,l,n-1}_{n+1} \right)^{\rho_{n-1}} = \mathcal{R}^{i,n+1,l,n}_{n+1}.
$$
If $m= n$, $l = n-1$, but $i \not= n-2$, then
$$
\left( \mathcal{R}^{i,n,n-2,n-1}_{n} \right)^{\rho_n \rho_{n-1}  \rho_{n-2}} = \left( \mathcal{R}^{i,n+1,n-2,n-1}_{n+1} \right)^{\rho_{n-1}  \rho_{n-2}} = \left( \mathcal{R}^{i,n+1,n-2,n}_{n} \right)^{ \rho_{n-2}} = \mathcal{R}^{i,n+1,n-1,n}_{n+1}.
$$
If $m= n$, $l = n-1$ and $i = n-2$, then
$$
\left( \mathcal{R}^{n-3,n,n-2,n-1}_{n} \right)^{\rho_n \rho_{n-1}  \rho_{n-2} \rho_{n-3}} = \left( \mathcal{R}^{n-3,n+1,n-2,n-1}_{n+1} \right)^{\rho_{n-1}  \rho_{n-2} \rho_{n-3}} =
$$
$$
= \left( \mathcal{R}^{n-3,n+1,n-2,n}_{n} \right)^{ \rho_{n-2} \rho_{n-3}} = \left(  \mathcal{R}^{n-3,n+1,n-1,n}_{n+1}\right)^{\rho_{n-3}} =  \mathcal{R}^{n-2,n+1,n-1,n}_{n+1}.
$$
\end{proof}

\medskip

\section{Cabling of the Artin pure braid group} \label{s41}

In the paper \cite{CW} was defined a cabling on the the set of pure braid groups $\{P_n\}_{n=2,3,\ldots}$. It was proven that in fact that all generators of $P_n$ come from the unique   generator $A_{12}$ of $U_2$, using cabling.
In this section we find a set of defining relation of $P_4$ in these generators.

In the previous section we define elements $c_{ij} = b_{ij} a_{ij}$. Put
$$
T_k^c = \langle c_{ij}~|~i+j = k+1  \rangle,~~k = 1, 2, \ldots, n-1.
$$
Any group $T_k^c$ for $k>1$ is getting from $T_{k-1}^c$ using cabling, i.e.
$$
T_k^c = \langle s_0(T_{k-1}^c), s_1(T_{k-1}^c), \ldots, s_{k-2}(T_{k-1}^c) \rangle.
$$
Then  $P_n = \langle T_1^c, T_2^c, \ldots, T_{n-1}^c \rangle$.

In the paper \cite{BW} was found  set of defining relations of $P_4$ in the cabled generators $c_{ij}$, more precisely  was proven

\begin{prop} The group $P_4$ is generated by elements
$$
c_{11},~~ c_{21},~~ c_{12},~~ c_{31},~~c_{22},~~ c_{13}
$$
and is defined by relations (where $\varepsilon = \pm 1$):
$$
c_{21}^{c_{11}^{\varepsilon}} = c_{21},~~~c_{12}^{c_{11}^{\varepsilon}} = c_{12}^{c_{21}^{-\varepsilon}},~~~c_{31}^{c_{11}^{\varepsilon}} = c_{31},~~~c_{22}^{c_{11}^{\varepsilon}} = c_{22},~~~c_{13}^{c_{11}^{\varepsilon}} = c_{13}^{c_{22}^{-\varepsilon}},
$$
$$
c_{31}^{c_{21}^{\varepsilon}} = c_{31},~~~c_{22}^{c_{21}^{\varepsilon}} = c_{22}^{c_{31}^{-\varepsilon}},~~~c_{13}^{c_{21}^{\varepsilon}} = c_{13}^{c_{22}^{\varepsilon} c_{31}^{-\varepsilon}},
$$
$$
c_{31}^{c_{12}^{\varepsilon}} = c_{31},~~~c_{13}^{c_{12}^{\varepsilon}} = c_{13}^{c_{31}^{-\varepsilon}}.
$$
$$
c_{22}^{c_{12}^{-1}} = [c_{31}, c_{13}^{-1}] \, [c_{13}^{-1}, c_{22}] \, c_{22} \, [c_{21}^2, c_{12}^{-1}] = c_{13}^{c_{31}} c_{13}^{-c_{22}}  c_{22} [c_{21}^2, c_{12}^{-1}],
$$
$$
c_{22}^{c_{12}} = [c_{12}, c_{21}^{-2}] \, c_{22} \, [c_{22}^{-3}, c_{13}]  \, [c_{13}, c_{31}^{-1}] = [c_{12}, c_{21}^{-2}] \, c_{13}^{-c_{22}^{-2}} \, c_{22} \, c_{13}^{c_{31}^{-1}} .
$$

\end{prop}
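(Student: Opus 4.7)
The plan is to derive this presentation from the classical Artin presentation of $P_4$ via Tietze transformations, working inside $VP_4$ where the cabled generators live naturally. First, I would express each $c_{ij}$ as an explicit word in the Artin generators $A_{k,l}$, $1\leq k<l\leq 4$. The base case is $c_{1,1}=b_{1,1}a_{1,1}=\lambda_{2,1}\lambda_{1,2}$, which equals (up to the sign convention) $\sigma_1^{-2}$ and hence $A_{1,2}^{\pm 1}$ under the standard inclusion $P_2\hookrightarrow VP_2$. Then, using the degeneracy formulas from Proposition~\ref{p3.1} applied iteratively (so that $a_{i,j}=s_{t_2}s_{t_1}\lambda_{1,2}$ and $b_{i,j}=s_{t_2}s_{t_1}\lambda_{2,1}$ for appropriate indices $t_1,t_2$), I would compute each product $c_{i,j}=b_{i,j}a_{i,j}$ explicitly as a word in the six Artin generators. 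Inverting the resulting identification shows that the stated six elements indeed generate $P_4$.

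Second, I would translate each defining relation of the Artin presentation of $P_4$ into the cabled generators. The Artin relations split by type according to the positions of the indices, and the corresponding translated relations fall naturally into the groups appearing in the statement: the conjugation-trivial relations $c_{\ast\ast}^{c_{11}^\varepsilon}=c_{\ast\ast}$ come from the fourth type of Artin relation (generators with disjoint index sets commute); the relations $c_{\ast\ast}^{c_{ij}^\varepsilon}=c_{\ast\ast}^{c_{kl}^{\mp\varepsilon}}$ come from the first and second Artin relations; and the last two relations for $c_{22}^{c_{12}^{\pm 1}}$ come from the Artin relation of mixed type. Completeness follows by a Tietze argument: each Artin relation is either identical to or an immediate consequence of a listed relation, and conversely every listed relation is a consequence of the Artin ones once the $c_{ij}$'s are expanded.

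The main obstacle will be the bookkeeping in the last two relations. The Artin relation
$$A_{im}^{-1}A_{kj}A_{im}=[A_{ij}^{-1},A_{mj}^{-1}]\,A_{kj}\,[A_{ij}^{-1},A_{mj}^{-1}]^{-1}$$
with $i<k<m<j$ forces $(i,k,m,j)=(1,2,3,4)$ in $P_4$ and is the source of the commutator structure in the $c_{22}^{c_{12}^{\pm 1}}$ formulas. Unpacking this, converting the commutator $[A_{ij}^{-1},A_{mj}^{-1}]$ into a product of cabled generators, and then simplifying with repeated use of the previously-established conjugation rules is the most delicate step. A secondary difficulty is purely notational: the sign convention in $c_{1,1}=A_{1,2}^{\pm 1}$ must be tracked consistently through the degeneracies so that the translated relations match those in the statement without spurious inversions; it is worth verifying this carefully on $P_3$ first (using only $c_{11},c_{21},c_{12}$) before attempting the full computation in $P_4$.
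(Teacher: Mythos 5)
The paper itself offers no proof of this proposition: it is imported verbatim from the reference \cite{BW}, where the computation is carried out, so there is no in-paper argument to compare against. Your plan --- express each $c_{ij}$ as an explicit word in the Artin generators via the degeneracy formulas, invert the substitution, and convert the Artin presentation by Tietze transformations --- is the natural route and is surely in the spirit of the cited source. The base identification $c_{11}=b_{11}a_{11}=\lambda_{21}\lambda_{12}=\sigma_1^{-2}=A_{12}^{-1}$ is correct, and since both generating sets have six elements related by a triangular substitution, generation and the formal Tietze step are unproblematic.

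Two caveats. First, your sorting of which listed relations come from which type of Artin relation is not accurate: for instance $c_{22}^{c_{11}^{\varepsilon}}=c_{22}$ is not an instance of the disjoint-index commutation, since $c_{11}$ lives on strands $\{1,2\}$ while $c_{22}$ involves all four strands; it encodes the fact that the internal twist of the block $\{1,2\}$ commutes with the external linking of that block with $\{3,4\}$, and its derivation already requires combining several Artin relations of the first three types. Second, and more substantively, the sufficiency half of the Tietze argument --- that every Artin relation, after substituting $A_{ij}=w_{ij}(c)$, is derivable from the listed relations in the $c$'s --- is the actual content of the proposition and is not ``immediate''; your proposal asserts it without indicating a mechanism, and this is precisely where the two long formulas for $c_{22}^{c_{12}^{\pm1}}$ earn their keep. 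This is a finite (if lengthy) verification, so the approach will succeed, but as written the plan defers exactly the part that makes the statement nontrivial; carrying out the $P_3$ case first, as you suggest, is indeed the right way to calibrate the sign conventions before attempting it.
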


\medskip

 Define the following subgroups of $P_4$:
$$
V_1 = \langle c_{11}, c_{12}, c_{13} \rangle,~~~V_2 = \langle c_{21}, c_{22} \rangle,~~~V_3 = \langle c_{31} \rangle.
$$
Then

\begin{thm}
$$
P_4 = V_1 \leftthreetimes (V_2 \leftthreetimes V_3).
$$
\end{thm}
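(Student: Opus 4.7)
The plan is to check the double semidirect decomposition directly from the cabled-generator presentation of $P_4$ stated in the preceding Proposition. I will verify three items: (a) $V_2 \trianglelefteq V_2 V_3$ with $V_2 \cap V_3 = 1$; (b) $V_1 \trianglelefteq P_4$ with $V_1 \cap (V_2 V_3) = 1$; (c) $V_1 \cup V_2 \cup V_3$ generates $P_4$. Item (c) is immediate, since the three subgroups together contain every listed generator of the presentation. Item (a) is quick: the relation $c_{31}^{c_{21}^\varepsilon} = c_{31}$ says $c_{21}$ and $c_{31}$ commute, and rearranging $c_{22}^{c_{21}^\varepsilon} = c_{22}^{c_{31}^{-\varepsilon}}$ to $c_{31}^{\varepsilon} c_{22} c_{31}^{-\varepsilon} = c_{21}^{-\varepsilon} c_{22} c_{21}^{\varepsilon}$ shows that $c_{31}$-conjugation keeps $V_2$ inside itself, so $V_3$ normalizes $V_2$.

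For (b), I will verify that each of $c_{21}^{\pm 1}$, $c_{22}^{\pm 1}$, $c_{31}^{\pm 1}$ conjugates each of $c_{11}, c_{12}, c_{13}$ into $V_1$. The case of $c_{11}$ is immediate from the three commutation relations $c_{21}^{c_{11}^\varepsilon} = c_{21}$, $c_{22}^{c_{11}^\varepsilon} = c_{22}$, $c_{31}^{c_{11}^\varepsilon} = c_{31}$. Rearranging $c_{12}^{c_{11}^\varepsilon} = c_{12}^{c_{21}^{-\varepsilon}}$ yields $c_{21}^{\mp 1} c_{12} c_{21}^{\pm 1} = c_{11}^{\pm 1} c_{12} c_{11}^{\mp 1} \in V_1$. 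Chaining the three short $c_{13}$-relations $c_{13}^{c_{11}^\varepsilon} = c_{13}^{c_{22}^{-\varepsilon}}$, $c_{13}^{c_{21}^\varepsilon} = c_{13}^{c_{22}^{\varepsilon} c_{31}^{-\varepsilon}}$, $c_{13}^{c_{12}^\varepsilon} = c_{13}^{c_{31}^{-\varepsilon}}$ lets me express $c_{13}^{c_{22}}$, $c_{13}^{c_{21}}$, $c_{13}^{c_{31}}$ as conjugates of $c_{13}$ by words in $c_{11}$ and $c_{12}$, hence in $V_1$.

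The hard case is $c_{12}^{c_{22}}$, which is not governed by any short relation; here I must invoke the two long relations expressing $c_{22}^{c_{12}^{\pm 1}}$. Substituting the short-relation expressions for each occurrence of $c_{13}^{c_{31}^{\pm 1}}$, $c_{13}^{-c_{22}^{\pm k}}$, and $[c_{21}^{\pm 2}, c_{12}^{\mp 1}]$ rewrites each long relation in the canonical form $c_{12}^{\pm 1} c_{22} c_{12}^{\mp 1} = A^{\pm} c_{22} B^{\pm}$ with $A^{\pm}, B^{\pm} \in V_1$. Combining these two identities and solving algebraically for $c_{22}^{\mp 1} c_{12} c_{22}^{\pm 1}$ produces an element of $V_1$; I expect this step to require the most care, since the long relations mix generators from $V_1, V_2, V_3$ nontrivially. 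Finally, the trivial intersections $V_2 \cap V_3 = 1$ and $V_1 \cap (V_2 V_3) = 1$ are extracted from the classical Artin decomposition $P_4 = U_4 \leftthreetimes (U_3 \leftthreetimes U_2)$ recalled in Section~2 by showing, via a rank count on the abelianization $P_4^{\mathrm{ab}} \cong \mathbb{Z}^6$, that $V_1, V_2, V_3$ map onto the free factors $U_4, U_3, U_2$ of the correct ranks $3, 2, 1$; together with the normalities just established, this forces the intersections to be trivial and yields the claimed decomposition.
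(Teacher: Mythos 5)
Your proposal follows essentially the same route as the paper's proof: both establish the decomposition by reading off conjugation formulas from the cabled-generator presentation (checking that $V_3$ normalizes $V_2$ and that $c_{21},c_{22},c_{31}$ each conjugate $c_{11},c_{12},c_{13}$ back into $V_1$), with the crux in both cases being the derivation of $c_{12}^{c_{22}}$ and $c_{12}^{c_{22}^{-1}}$ from the two long relations. The only divergence is your closing argument for the trivial intersections via a rank count on $P_4^{\mathrm{ab}}\cong\mathbb{Z}^6$ --- a point the paper omits entirely --- and there you should be aware that a rank count alone does not force $V_1\cap(V_2V_3)=1$, since a nontrivial intersection element could lie in the commutator subgroup; that step needs the identification with the Artin factors to be made precise before the conclusion follows.
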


\begin{proof}
At first prove that $\langle V_2, V_3 \rangle = V_2 \leftthreetimes V_3$. Indeed, this group is defined by relations.
$$
[c_{31}, c_{21}] = 1,~~c_{22}^{c_{21}} = c_{22}^{c_{31}^{-1}}.
$$
Since the first relation we can write in the form
$$
c_{21}^{c_{31}} = c_{21},
$$
we have the need decomposition.

From the defining relations of $P_4$ find the following formulas of conjugation by $c_{31}$:
$$
c_{11}^{c_{31}} = c_{11},~~~c_{12}^{c_{31}} = c_{12 },~~~c_{13}^{c_{31}} = c_{13}^{c_{12}^{-1}}.
$$
Hence
$$
P_4 = \langle V_1, V_2 \rangle \leftthreetimes V_3.
$$

Find the formulas of conjugations by $c_{21}$:
$$
c_{11}^{c_{21}} = c_{11},~~~c_{12}^{c_{21}} = c_{12}^{c_{11}^{-1}},~~~c_{13}^{c_{21}} = c_{13}^{c_{12} c_{11}^{-1}}.
$$
Also we have two formulas of conjugation by $c_{22}$:
$$
c_{11}^{c_{22}} = c_{11},~~~c_{13}^{c_{22}} = c_{13}^{c_{11}^{-1}}.
$$
To  finish the proof we need to find a formula for the conjugation $c_{12}^{c_{22}}$ and $c_{12}^{c_{22}^{-1}}$.

In the proof of the previous theorem we have found relation:
$$
c_{21} c_{22}^{-1} c_{13} c_{12}^{-1} = c_{21}^{-1} c_{12}^{-1}  c_{21}^{2} c_{22}^{-1} (c_{22}^{-1} c_{13} c_{22}).
$$
Multiply both sides on $c_{21}^{-1}$ to the left and using relation
$$
c_{22}^{-1} c_{13} c_{22} = c_{11} c_{13} c_{11}^{-1},
$$
we get
$$
c_{22}^{-1} c_{13} c_{12}^{-1} = (c_{21}^{-2} c_{12}^{-1}  c_{21}^{2}) (c_{11} c_{13} c_{11}^{-1})^{c_{22}} c_{22}^{-1}.
$$
Using the conjugation formulas:
$$
c_{21}^{-2} c_{12}^{-1}  c_{21}^{2} = c_{11}^{2} c_{12}^{-1}  c_{11}^{-2},~~~(c_{11} c_{13} c_{11}^{-1})^{c_{22}} = c_{11}^2 c_{13} c_{11}^{-2},
$$
we get
$$
(c_{13} c_{12}^{-1})^{c_{22}} = c_{11}^{2} c_{12}^{-1}  c_{13} c_{11}^{-2}.
$$
Using the conjugation formula:
$$
c_{13}^{c_{22}} = c_{13}^{c_{11}^{-1}}
$$
we have
$$
c_{13} c_{11}^{-1} c_{12}^{-c_{22}} =  c_{11} c_{12}^{-1} c_{13} c_{11}^{-2}.
$$
From this relation  we get the need formula:
$$
c_{12}^{c_{22}} = c_{11}^2 c_{13}^{-1} c_{12} c_{11}^{-1} c_{13} c_{11}^{-1}.
$$

Conjugating both sides by $c_{22}^{-1}$ we find
$$
c_{12}^{c_{22}^{-1}} = c_{11}^{-1} c_{13} c_{11}^{-1} c_{12}  c_{13}^{-1} c_{11}^{2}.
$$
\end{proof}

In this theorem we used full set of defining relations for $P_4$. Let us consider the group $P_3$. It has the following presentation
$$
P_3 = \langle c_{11}, c_{21}, c_{12}~|~c_{11}^{c_{21}} = c_{11},~~c_{12}^{c_{21}} = c_{12}^{c_{11}^{-1}} \rangle.
$$
Using degeneracy maps $s_0, s_1, s_2$, we construct the following subgroups of $P_4$:
$$
s_0(P_3) = \langle c_{21}, c_{31}, c_{22}~|~c_{21}^{c_{31}} = c_{21},~~c_{22}^{c_{31}} = c_{22}^{c_{21}^{-1}} \rangle,
$$
$$
s_1(P_3) = \langle c_{12}, c_{31}, c_{13}~|~c_{12}^{c_{31}} = c_{12},~~c_{13}^{c_{31}} = c_{13}^{c_{12}^{-1}} \rangle,
$$
$$
s_2(P_3) = \langle c_{11}, c_{22}, c_{13}~|~c_{11}^{c_{22}} = c_{11},~~c_{13}^{c_{22}} = c_{13}^{c_{11}^{-1}} \rangle.
$$


From the list of relations in $P_3$, $s_i(P_3)$, $i = 0, 1, 2$, we see that it is not the full list of relations for $P_4$. To have a full list we can add the relations
$$
c_{11}^{c_{31}} = c_{11},~~c_{13}^{c_{21}} = c_{13}^{c_{12} c_{11}^{-1}},~~c_{12}^{c_{22}} = c_{11}^2 \, c_{13}^{-1} \, c_{12} \, c_{11}^{-1} \, c_{13} \, c_{11}^{-1}.
$$

But us follows from Theorem \ref{lift}, for $n \geq 5$ the full list of relations for $P_n$ comes from relations of $P_{n-1}$, $s_i(P_{n-1})$, $i = 0, 1, \ldots, n-2$. Using induction by $n$ we can find relations of $P_n$. We get the following relations:

-- conjugations by $c_{n-1,1}$

$$
c_{n-k,k}^{c_{n-1,1}} = c_{n-k,k}^{c_{n-k,k-1}^{-1}},~~~k = 2, 3, \ldots, n-1;~~~c_{ij}^{c_{n-1,1}} = c_{ij}~~~\mbox{if}~i+j < n;
$$

-- conjugations by $c_{n-2,2}$

$$
c_{n-k,k}^{c_{n-2,2}} = c_{n-k,k}^{c_{n-k,k-2}^{-1}},~~~k = 2, 3, \ldots, n-1;~~~c_{ij}^{c_{n-2,2}} = c_{11}^2 \, c_{13}^{-1} \, c_{ij} \, c_{11}^{-1} \, c_{13} \, c_{11}^{-1},~~~i+j < n;
$$
$$
c_{lm}^{c_{n-2,}} = c_{lm}~~~\mbox{in all other cases};
$$

In the general case we prove

\begin{thm}
For $n \geq 3$ the pure braid group $P_n$ is the semi-direct product of free groups:
$$
P_n = V_1 \leftthreetimes (V_{2} \leftthreetimes ( \ldots ( V_{n-2} \leftthreetimes V_{n-1})\ldots)),
$$
where
$$
V_{n-1} = \langle c_{n-1,1} \rangle,
$$
$$
V_{n-2} = \langle c_{c_{n-2,1}, n-2,2}  \rangle,
$$
$$
......................................
$$
$$
V_{1} = \langle c_{11}, c_{12}, \ldots, c_{1,n-1} \rangle.
$$
\end{thm}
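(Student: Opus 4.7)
The proof will proceed by induction on $n$, the cases $n = 3$ and $n = 4$ having been treated just above (the latter being the main computation of the previous part of Section~\ref{s41}). Fix $n \geq 5$ and assume the decomposition
\[
P_{n-1} = V'_1 \leftthreetimes \left(V'_2 \leftthreetimes \left(\cdots \leftthreetimes V'_{n-2}\right)\cdots\right),
\]
where $V'_k = \langle c_{k,1},\ldots,c_{k,n-1-k}\rangle$ is free of rank $n-1-k$. The plan is to combine this outer hypothesis with Theorem~\ref{lift}, which identifies a complete set of defining relations of $P_n$ as the relations of $P_{n-1}$ together with their images under the degeneracies $s_0,\ldots,s_{n-2}$.

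Inside $P_n$, set $H_k := \langle V_k, V_{k+1}, \ldots, V_{n-1}\rangle$. I would prove by descending induction on $k$, from $k = n-1$ down to $k = 1$, that $H_k = V_k \leftthreetimes H_{k+1}$. The base case $H_{n-1} = V_{n-1} = \langle c_{n-1,1}\rangle$ is infinite cyclic, because $c_{n-1,1}$ is an iterated cable of $A_{1,2}$ and hence has infinite order. For the inductive step there are three points to verify: (a) every element of $H_{k+1}$ normalizes $V_k$; (b) the intersection $V_k \cap H_{k+1}$ is trivial; (c) $V_k$ is free of rank $n-k$.

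Point (a) is the computational core. The conjugation formulas for $c_{i,j}^{c_{n-1,1}}$ and $c_{i,j}^{c_{n-2,2}}$ displayed just before the theorem exhibit the form of the answer; the remaining formulas $c_{i,j}^{c_{l,m}}$ for arbitrary $l > i$ can be produced inductively by applying an appropriate degeneracy $s_t$ to a conjugation formula already known in $P_{n-1}$, using the explicit rule $s_t(c_{i,j}) \in \{c_{i,j},\, c_{i,j+1},\, c_{i+1,j+1}\}$ to track how both the conjugating and the conjugated element transform. By Theorem~\ref{lift} these lifted relations, together with the inherited relations of $P_{n-1}$, exhaust the presentation of $P_n$, so a case analysis on the relative positions of the indices $(i,j,l,m)$ suffices to check that each such conjugate of a generator of $V_k$ lies again in $V_k$. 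Point (c) then follows from the outer-hypothesis freeness of $V'_k$ together with the observation, visible in the lifted relations, that every relation produced by a degeneracy necessarily mixes generators from at least two distinct $V$-subgroups; in particular the single new generator $c_{k,n-k}$ satisfies no relation purely inside $V_k$, so it is free over $V'_k$. For (b) one may factor through the classical Artin decomposition $P_n = U_n \leftthreetimes P_{n-1}$ to obtain a normal form and then invoke the outer inductive hypothesis applied inside $P_{n-1}$ to conclude that no non-trivial element of $V_k$ can be rewritten as a product in $H_{k+1}$.

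The main obstacle will be (b) and (c), which together amount to ruling out any hidden identification between $V_k$ and parts of $H_{k+1}$; both ultimately require careful control of the mixing between the new cabled generators $c_{k,n-k}$ and those inherited from $P_{n-1}$. Step (a), by contrast, is tedious but essentially mechanical once the action of the $s_t$'s on cabled generators is laid out; the only subtlety there is the bookkeeping of the index case-analysis analogous to the one carried out in Section~\ref{lift}.
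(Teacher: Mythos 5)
Your plan follows essentially the same route as the paper: induct on $n$, use the Lifting Theorem to obtain the defining relations of $P_n$ from those of $P_{n-1}$ via the degeneracy maps, and reduce the semi-direct product decomposition to the normality of $V_1$ (and, recursively, of each $V_k$ in $\langle V_k,\ldots,V_{n-1}\rangle$). The paper's own proof is in fact only a two-line sketch that stops at announcing that $V_1$ is normal in $P_n$, so your explicit enumeration of the remaining verifications --- normality under conjugation, trivial intersection, and freeness of each $V_k$ --- is, if anything, a more complete account of the same argument.
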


\begin{proof}
The theorem is true for $n=4$.
We prove that $P_n = V_1 \leftthreetimes P_{n-1}$ for $n > 4$. By the lifting theorem the set of defining relations for $P_n$ come from the set of defining relations for $P_{n-1}$ by degeneracy maps. Using this fact let us prove that $V_1$ is normal in $P_n$.
\end{proof}

\section{Directions for further research} \label{fin}

We know some generalizations oh the Artin braid group $B_n$, for example, welded braid group, singular braid groups and others (see \cite{B}). In these groups it is possible to define pure subgroups. It is interesting to study presentations of these subgroups in cabled generators, define analogs of simplicial group $T_*$ and find its homotopy type.

For example, the welded braid group $WB_n$ contains the group of basis conjugating automorphisms $Cb_n$.

\begin{question}
The group of basic conjugating automorphisms $Cb_2$ is generated by two automorphisms $\varepsilon_{21}$ and $\varepsilon_{12}$ which generate a free group of rank 2. Using operation cabling find a presentation of $Cb_n$ in the cable generators.
\end{question}

\begin{question}
Let  $\varphi : VP_n \to Cb_n$ be a homomorphism which sends $\lambda_{ij}$ to $\varepsilon_{ij}$. Is it true that $T_{n-1}$ is isomorphic to its image $\varphi(T_{n-1})$?
\end{question}

We know Artin and Gassner representations of $P_n$ (see \cite[Chapter 3]{Bir}).

\begin{question}
Find analogs of Artin and Gassner representations of $P_n$, using decomposition from Section \ref{s41}. Are they equivalent to the classical representations?
\end{question}

\end{document}